\newtheorem{theorem}{Theorem}[section]
\newtheorem{lemma}[theorem]{Lemma}
\newtheorem{proposition}[theorem]{Proposition}
\newtheorem{corollary}[theorem]{Corollary}
\theoremstyle{definition}
\newtheorem{example}[theorem]{Example}
\theoremstyle{nonumberplain}
\newcommand{\imp}{\mathbin{\rightarrow}}
\newcommand{\eq}{\mathrel{\sim}}
\newcommand{\rad}{\mathop{\mathrm{rad}}}
\newcommand{\radn}{\mathop{\mathbf{rad}}}
\newcommand{\Mod}{\mathop{\mathrm{Mod}}}
\newcommand{\bigand}{\boldsymbol{\bigwedge}\displaylimits}
\newcommand{\band}{\boldsymbol{\land}}
\newcommand{\bor}{\boldsymbol{\lor}}
\newcommand{\bimp}{\mathbin{\boldsymbol{\rightarrow}}}
\newcommand{\biff}{\mathbin{\boldsymbol{\leftrightarrow}}}
\begin{document}

\title{Algebraic expansions of logics and algebras and a case study of Abelian $\ell$-groups and perfect MV-algebras}

\author{M. Campercholi, D. Casta\~no, J. P. D\'{\i}az Varela, J. Gispert}

\maketitle

\begin{abstract}
%% Text of abstract
An algebraically expandable (AE) class is a class of algebraic structures axiomatizable by sentences of the form $\forall \exists! \bigand p = q$. For a logic $L$ algebraized by a quasivariety $\mathcal{Q}$ we show that the AE-subclasses of $\mathcal{Q}$ correspond to certain natural expansions of $L$, which we call {\em algebraic expansions}. These turn out to be a special case of the expansions by implicit connectives studied by X. Caicedo. We proceed to characterize all the AE-subclasses of Abelian $\ell$-groups and perfect MV-algebras, thus fully describing the algebraic expansions of their associated logics.
\end{abstract}

\section{Introduction}

The idea of expanding structures in a given language with new operations and relations definable in {\em some way} is pervasive in Algebra and Model Theory. If we focus on operations defined by systems of equations on algebraic structures we arrive at the notion of {\em Algebraic Expansions} (\cite{CamVag09-AlgExpClasses}). Restricting to this kind of definability has the advantage of producing well-behaved expansions that can be studied with `universal-algebraic' techniques (e.g., sheaf representations). We describe these expansions in more detail. 

Let $\tau$ be an algebraic language. Given a class of $\tau$-algebras $\mathcal{K}$ and a system of equations of the form
\begin{align*}
s_{1}(x_{1},\dots,x_{n},z_{1},\dots,z_{m}) & = t_{1}(x_{1},\dots,x_{n},z_{1},\dots,z_{m}) \\
& \hspace{2mm} \vdots \\
s_{k}(x_{1},\dots,x_{n},z_{1},\dots,z_{m}) & = t_{k}(x_{1},\dots,x_{n},z_{1},\dots,z_{m})
\end{align*}
we can consider the class $\mathcal{A}$ of those algebras in $\mathcal{K}$ for which, given values for the $x$'s, there are unique values for the $z$'s such that all equalities hold. We say that $\mathcal{A}$ is an {\em Algebraically Expandable (AE) subclass} of $\mathcal{K}$ given that the members of $\mathcal{A}$ can be expanded with the operations defined by the system of equations. For example, let $\mathcal{K}$ be the class of $\{\imp,1\}$-subreducts of Boolean algebras, and consider the system of equations  
\begin{align*}
z \imp x_1 & = 1, \\
z \imp x_2 & = 1, \\
((x_1 \imp z) \imp (x_2 \imp z)) \imp (x_2 \imp z) = 1.
\end{align*}
The class $\mathcal{A}$ in this case is the class of algebras in $\mathcal{K}$ where every two elements have a meet with respect to the ordering induced by $\imp$. The expansion of $\mathcal{A}$ is (term-equivalent to) the class of generalized Boolean algebras.

In the setting of Abstract Logic expansions by new connectives are a common theme as well, in particular, expansions of a logic $L$ with connectives determined in some way by $L$.  %There are a couple of natural reasons for this.  Sometimes the motivation comes from the fact that $L$ is not expressive enough for our purposes and other times it is a way to garner a better understanding of the expressive power of $L$. If we aim for our expanded logic to retain the essence of $L$, it seems reasonable to choose connectives that have a close relation to the original ones.
As we know, there is a long-standing and fruitful interplay between Logic and Algebra, so it is natural to consider what, if any, is the logical counterpart of AE-classes. As we shall see, for the case of an algebraizable logic $L$ with equivalent algebraic semantics $\mathcal{Q}$, the AE-subclasses $\mathcal{Q}$ are in correspondence with the family of a specific kind of expansions of $L$, which we call {\em algebraic expansions}. The notion of an algebraic expansion of a logic turns out to be quite natural, we think, and interestingly it falls into the general framework of expansions by implicit connectives studied by X. Caicedo in \cite{Caicedo04-ImpConnectivesAlgLogics}. An immediate consequence is that algebraic expansions are again algebraizable. The algebraic expansions of $L$ are naturally ordered by morphisms that preserve the language of $L$. It turns out that this is a lattice ordering when considered modulo equipollency, and the ensuing lattice is dually isomorphic with the lattice of AE-subclasses of $\mathcal{Q}$ under inclusion.

Besides introducing the notion of algebraic expansions of a logic we analyze two particular cases: $\ell$-groups and perfect MV-algebras. In both cases we obtain full descriptions of the AE-classes, and thus, of the algebraic expansions of their corresponding logics. We show that in both cases there is a continuum of expansions, and the lattices are isomorphic with $\mathbf{2}^\omega \oplus \mathbf{1}$ and $\mathbf{2}^\omega \oplus \mathbf{2}$, in the former and latter case respectively.

In the next section we summarize all the basic definitions and properties of the theory of AE-classes needed for this article. In Section 3 we give the formal definition of algebraic expansion of a logic, and prove the fundamental results linking them with AE-classes (Theorems \ref{TEO: piedra fundacional} and \ref{TEO: equivalencias interpretaciones}). In Section \ref{SECTION: l-groups} we characterize the AE-classes of Abelian $\ell$-groups and the algebraic expansions of their corresponding logic. Finally, in Section \ref{SECTION: hoops and MV}, we translate the results from Section 4 to their analogs for perfect MV-algebras, using cancellative hoops as an intermediate step. This completely describes the algebraic expansions of the associated logic.

\section{Preliminaries}
\label{SECTION: Preliminaries}

In this section we introduce fundamental definitions, establish notation and present several basic facts needed in the sequel. We assume familiarity with basic Universal Algebra, Model Theory and Abstract Algebraic Logic (see, e.g., \cite{BurSan81-Book-CourseUnivAlg,Hodges93-Book-ModelTheory,Font16-Book-AbstractAlgebraicLogic}, respectively).

\subsection{Notation and basic definitions}

Throughout this article algebras are considered as models of first-order languages without relations. For example, Abelian $\ell$-groups are algebras in the language $\tau_\mathcal{G} := \{+,-,0,\vee,\wedge\}$.  As is customary we use bold letters ($\mathbf{A}, \mathbf{B}, \mathbf{C}, \ldots$) for algebraic structures and italic letters ($A, B, C, \ldots$) for the underlying sets. For algebras $\mathbf{A}$ and $\mathbf{B}$ we write $\mathbf{A} \subseteq \mathbf{B}$ whenever $\mathbf{A}$ is a subalgebra of $\mathbf{B}$. 

Given a structure $\mathbf{A}$ in a language $\tau$ and a term $t(x_1,\ldots,x_n)$ in the same language, we write $t^\mathbf{A}(\bar{a})$ for the value of the term upon assigning elements $a_1,\ldots,a_n$ from $A$ to the variables $x_1,\ldots,x_n$. We may omit the superscript $\mathbf{A}$ if there is no risk of confusion.

Given a (first-order) formula $\varphi$, we say that $\varphi$ is
\begin{itemize} \setlength{\itemsep}{0pt}\setlength{\parskip}{0pt}
\item an {\em identity} if it has the form $\forall \bar{x} (p(\bar{x}) = q(\bar{x}))$, where $p$ and $q$ are terms,
\item a {\em quasi-identity} if it has the form $\forall \bar{x} (\alpha(\bar{x}) \bimp \beta(\bar{x}))$,\footnote{We write the first-order connectives $\band$, $\bor$, $\bimp$, $\biff$ in bold font to distinguish them from algebraic operations and connectives in sentential logics.}  where $\alpha$ is a finite conjunction of term-equalities and $\beta$ is a term-equality,
\item {\em universal} if it has the form $\forall \bar{x} \psi$, where $\psi$ is quantifier-free,
\item {\em existential} if it has the form $\exists \bar{x} \psi$, where $\psi$ is quantifier-free.
\end{itemize}
A {\em sentence} is a formula with no free variables. If $\Sigma$ is a set of sentences, $\Mod(\Sigma)$ denotes the class of all models that satisfy the sentences in $\Sigma$. 

Whenever we consider a class $\mathcal{K}$ of algebras, we assume that all algebras in $\mathcal{K}$ have the same language. Given a class $\mathcal{K}$ of algebras, we define the usual class operators:
\begin{itemize} \setlength{\itemsep}{0pt}\setlength{\parskip}{0pt}
\item $\mathsf{I}(\mathcal{K})$ denotes the class of isomorphic images of members of $\mathcal{K}$,
\item $\mathsf{H}(\mathcal{K})$ denotes the class of homomorphic images of members of $\mathcal{K}$,
\item $\mathsf{S}(\mathcal{K})$ denotes the class of subalgebras of members of $\mathcal{K}$,
\item $\mathsf{P}(\mathcal{K})$ denotes the class of direct products with factors in $\mathcal{K}$,
\item $\mathsf{P_U}(\mathcal{K})$ denotes the class of ultraproducts with factors in $\mathcal{K}$.
\end{itemize}
If $\mathsf{O}$ is one of the above operators and $\mathcal{K} = \{\mathbf{A}_1,\ldots, \mathbf{A}_n\}$, we write $\mathsf{O}(\mathbf{A}_1,\ldots,\mathbf{A}_n)$ instead of $\mathsf{O}(\mathcal{K})$. 

Remember that a class $\mathcal{K}$ is a {\em variety} (or {\em equational class}) if it can be axiomatized using a set of identities; equivalently, $\mathcal{K}$ is a variety if it is closed under $\mathsf{H}$, $\mathsf{S}$ and $\mathsf{P}$. The smallest variety containing a given class $\mathcal{K}$ is $\mathsf{HSP}(\mathcal{K})$ and is denoted by $\mathsf{V}(\mathcal{K})$. A {\em quasivariety} is a class of algebras axiomatized by a set of quasi-identities. A class $\mathcal{K}$ is a quasivariety if and only if $\mathcal{K}$ is closed under $\mathsf{I}$, $\mathsf{S}$, $\mathsf{P}$ and $\mathsf{P_U}$; the smallest quasivariety containing a given class $\mathcal{K}$ is $\mathsf{ISPP_U}(\mathcal{K})$, also denoted by $\mathsf{Q}(\mathcal{K})$. Finally, recall that $\mathcal{K}$ is {\em universal} if it is axiomatized by universal sentences, which is equivalent to $\mathcal{K}$ being closed under $\mathsf{I}$, $\mathsf{S}$ and $\mathsf{P_U}$. Moreover, the smallest universal class containing a class $\mathcal{K}$ is given by $\mathsf{ISP_U}(\mathcal{K})$.

Given a class $\mathcal{K}$ and two sentences $\varphi$, $\psi$, we say that $\varphi$ and $\psi$ are {\em equivalent} in $\mathcal{K}$, and write $\varphi \eq \psi$ in $\mathcal{K}$, if for every $\mathbf{A} \in \mathcal{K}$ we have that $\mathbf{A} \vDash \varphi$ if and only if $\mathbf{A} \vDash \psi$.

\subsection{Algebraically expandable classes}
\label{SUBSECTION:AE-classes}

In order to define algebraically expandable classes \cite{CamVag09-AlgExpClasses}, one of the fundamental notions in this article, we need to define the special type of sentences that define them. An {\em equational function definition sentence} (EFD-sentence for short) in the language $\tau$ is a sentence of the form\begin{equation} \forall x_1 \ldots x_n \exists! z_1 \ldots z_m \, \bigand_{i=1}^k s_i(\bar{x},\bar{z}) = t_i(\bar{x},\bar{z}) \label{def phi} \end{equation} where $s_i,t_i$ are $\tau$-terms, $n \geq 0$, and $m \geq 1$. Suppose $\varphi$ is the EFD-sentence in \eqref{def phi}. Observe that $\varphi$ is valid in a structure $\mathbf{A}$ if and only if the system of equations $\displaystyle \bigand_{i=1}^k s_i(\bar{x},\bar{z}) = t_i(\bar{x},\bar{z})$ defines a (total) function $F\colon A^n \to A^m$. We denote the coordinate functions of $F$ by $[\varphi]_1^\mathbf{A}, \ldots, [\varphi]_m^\mathbf{A}$.

Let $\varphi$ be as in \eqref{def phi}. We define:
\begin{itemize} \setlength{\itemsep}{0pt}\setlength{\parskip}{1pt}
\item $E(\varphi) := \displaystyle \forall \bar{x} \exists \bar{z} \, \bigand_{i=1}^k s_i(\bar{x},\bar{z}) = t_i(\bar{x},\bar{z})$,
\item $U(\varphi) := \displaystyle \forall \bar{x} \bar{y} \bar{z} \bigand_{i=1}^k s_i(\bar{x},\bar{y}) = t_i(\bar{x},\bar{y}) \band \bigand_{i=1}^k s_i(\bar{x},\bar{z}) = t_i(\bar{x},\bar{z}) \bimp \bar{y} = \bar{z}$.
\end{itemize}
The following basic facts are used without explicit reference throughout the article.
\begin{itemize}
\item $\varphi$ is equivalent to $E(\varphi) \band U(\varphi)$.
\item $U(\varphi)$ is (equivalent to) a conjunction of quasi-identities.
\item $E(\varphi)$ is preserved by homomorphic images, that is, for any surjective homomorphism $f: \mathbf{A} \to \mathbf{B}$, if $\mathbf{A} \vDash E(\varphi)$, then $\mathbf{B} \vDash E(\varphi)$.
\end{itemize}

A class of algebras $\mathcal{K}$ is an {\em algebraically expandable class} (AE-class for short) if there is a set of EFD-sentences $\Sigma$ such that $\mathcal{K} = \Mod(\Sigma)$. Let $\mathcal{K}$ and $\mathcal{C}$ be classes of algebras, $\mathcal{K} \subseteq \mathcal{C}$. We say that $\mathcal{K}$ is an {\em AE-subclass} of $\mathcal{C}$ if $\mathcal{K}$ is axiomatizable by EFD-sentences relative to $\mathcal{C}$, that is, $\mathcal{K} = \mathcal{C} \cap \Mod(\Sigma)$ for some set $\Sigma$ of EFD-sentences. The reader should be aware that $\mathcal{K}$ may be an AE-subclass of $\mathcal{C}$,
but fail to be an AE-class itself.

Let $\mathcal{Q}$ be a quasivariety in the language $\tau$ and let $\Sigma$ be a set of EFD-sentences. There is an obvious expansion of the AE-subclass $\mathcal{K} := \mathcal{Q} \cap \Mod(\Sigma)$ of $\mathcal{Q}$ obtained by skolemizing the existential quantifiers in $\Sigma$; details follow. For each $\varphi \in \Sigma$ of the form $\forall x_1 \ldots x_n \exists! z_1 \ldots z_m \, \bigand_{i=1}^k s_i(\bar{x},\bar{z}) = t_i(\bar{x},\bar{z})$ consider new $n$-ary function symbols $f_1^\varphi, \ldots, f_m^\varphi$ and the set of identities $$E_\varphi := \{\forall \bar{x} \, s_i(\bar{x}, f_1^\varphi(\bar{x}), \ldots, f_m^\varphi(\bar{x})) = t_i(\bar{x}, f_1^\varphi(\bar{x}), \ldots, f_m^\varphi(\bar{x})): 1 \leq i \leq k\}.$$ Let $\tau_\Sigma$ be the expansion of $\tau$ obtained by adding the $f_j^\varphi$'s for each $\varphi \in \Sigma$, and put
\begin{align*}
E_\Sigma & := \bigcup_{\varphi \in \Sigma} E_\varphi, \\
U_\Sigma & := \{U(\varphi): \varphi \in \Sigma\}.
\end{align*}
Define $$\mathcal{Q}^\Sigma := \Mod(\Gamma \cup E_\Sigma \cup U_\Sigma),$$ where $\Gamma$ is a set of quasi-identities axiomatizing $\mathcal{Q}$. We call $\mathcal{Q}^\Sigma$ an {\em algebraic expansion of} $\mathcal{Q}$.  Note that $\mathcal{Q}^\Sigma$ is a quasivariety over the language $\tau_\Sigma$ whose members are precisely the expansions of the members of $\mathcal{K}$. An interesting property of this process is that, up to term-equivalence, the quasivariety $\mathcal{Q}^\Sigma$ is determined by the AE-subclass $\mathcal{K}$ and not by the axiomatization $\Sigma$; for details see Theorem \ref{TEO: equivalencias interpretaciones}.

%\begin{proposition} \label{PROP: term-equit expansions}
%Let $\mathcal{Q}$ be a quasivariety in the language $\tau$, and let $\Sigma$ and $\Sigma'$ be two sets of EFD-sentences over $\tau$. Then $\mathcal{Q} \cap \Mod(\Sigma) = \mathcal{Q} \cap \Mod(\Sigma')$ if and only if $\mathcal{Q}^{\Sigma'}$ and $\mathcal{Q}^\Sigma$ are $\tau$-term-equivalent.
%\end{proposition}
%

We conclude this section with a preservation result for EFD-sentences needed in the sequel. Recall that a structure $\mathbf{A}$ is {\em finitely subdirectly irreducible} if its diagonal congruence is finitely meet-irreducible in the congruence lattice of $\mathbf{A}$. We write $\mathcal{K}_\mathrm{fsi}$ for the class of finitely subdirectly irreducible members of $\mathcal{K}$. A variety is {\em arithmetical} provided that it is both congruence distributive and congruence permutable.

\begin{lemma} \label{LEMA: aritm + fsi univ}
Let $\mathcal{V}$ be an arithmetical variety such that $\mathcal{V}_\mathrm{fsi} \cup \{\text{trivial algebras}\}$ is a universal class, and let $\mathbf{A} \in \mathcal{V}$. If $\varphi$ is an EFD-sentence such that $\mathsf{H}(\mathbf{A})_\mathrm{fsi} \vDash \varphi$, then $\mathbf{A} \vDash \varphi$.
\end{lemma}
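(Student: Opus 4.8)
The plan is to prove the two halves of $\varphi$ separately, using that $\varphi$ is equivalent to $E(\varphi) \band U(\varphi)$. The uniqueness part is easy and needs no arithmeticity. By Birkhoff's subdirect representation, $\mathbf{A}$ embeds subdirectly into a product of quotients $\mathbf{A}/\theta_j$ with each $\theta_j$ completely meet-irreducible, so each $\mathbf{A}/\theta_j$ is subdirectly irreducible, hence finitely subdirectly irreducible, and lies in $\mathsf{H}(\mathbf{A})_\mathrm{fsi}$. By hypothesis each $\mathbf{A}/\theta_j \vDash \varphi$, and in particular $\mathbf{A}/\theta_j \vDash U(\varphi)$. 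Since $U(\varphi)$ is a conjunction of quasi-identities and quasi-identities are preserved under $\mathsf{S}$ and $\mathsf{P}$, it passes from the factors to the subdirect product and hence to $\mathbf{A}$. (The same argument applied to an arbitrary quotient $\mathbf{A}/\theta$ shows $U(\varphi)$ holds in \emph{every} homomorphic image of $\mathbf{A}$, which is what makes the compatibility of partial solutions below automatic.) Thus $\mathbf{A} \vDash U(\varphi)$.

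The substance is the existence sentence $E(\varphi)$, which is a $\forall\exists$ statement and so does not transfer along subdirect embeddings. Here I would pass to a \emph{Boolean product} (sheaf) representation, and this is the point at which both hypotheses are consumed. Since $\mathcal{V}$ is arithmetical it has Boolean factor congruences, so by Comer's theorem every member is a Boolean product of directly indecomposable algebras over a Boolean space; and since $\mathcal{V}_\mathrm{fsi} \cup \{\text{trivial algebras}\}$ is universal, the stalks can be pinned inside this class. Concretely, I would write $\mathbf{A} \cong \Gamma(X, (\mathbf{A}_x)_{x \in X})$, a Boolean product over a Boolean space $X$ with every stalk $\mathbf{A}_x \in \mathcal{V}_\mathrm{fsi} \cup \{\text{trivial algebras}\}$, so that $\mathbf{A}_x \vDash \varphi$ for all $x$ (the trivial algebra satisfies every EFD-sentence).

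With this in hand the existence of solutions patches. Fix $\bar{a} \in A^n$; writing $\bar a(x)$ for the stalk values, each $\mathbf{A}_x$ furnishes a \emph{unique} solution $\bar{b}(x) := F_x(\bar a(x)) \in A_x^m$. For a fixed $x_0$, surjectivity of the projections lets me pick a global section $\bar{c}' \in A^m$ with $\bar{c}'(x_0) = \bar{b}(x_0)$; the set where $\bar c'$ solves the system, being a finite intersection of equalizers $\{x : s_i(\bar a,\bar c')(x) = t_i(\bar a,\bar c')(x)\}$, is clopen and contains $x_0$, and on it stalkwise uniqueness forces $\bar{c}'$ to agree with $\bar{b}$. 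Thus the sets $U_{\bar c} := \{x : \bar c \text{ solves at } x\}$ form a clopen cover of $X$; by compactness finitely many suffice, and patching the corresponding local sections along a clopen partition (a Boolean product is closed under such patching) yields a single $\bar{c} \in A^m$ with $\bar{c}(x) = \bar{b}(x)$ for every $x$. Since an equation holds in a Boolean product exactly when it holds in every stalk, $\bar{c}$ solves the system in $\mathbf{A}$, whence $\mathbf{A} \vDash E(\varphi)$. Together with $\mathbf{A} \vDash U(\varphi)$ this gives $\mathbf{A} \vDash \varphi$.

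The main obstacle is the representation step: securing a Boolean product whose stalks are genuinely finitely subdirectly irreducible, where arithmeticity (for the Boolean structure of the factor congruences, via Comer) and the universality of $\mathcal{V}_\mathrm{fsi} \cup \{\text{trivial}\}$ (to place the stalks in the right class) must be combined correctly. Everything afterwards is the standard patching argument; its only non-formal inputs are compactness of $X$ and clopenness of equalizers, together with uniqueness, which is precisely what guarantees that the locally chosen witnesses are compatible and glue --- the sheaf-theoretic shadow of the Chinese Remainder Theorem available in congruence-permutable varieties.
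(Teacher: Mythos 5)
Your overall architecture is the same as the paper's, which simply cites two results: a sheaf-theoretic representation of $\mathbf{A}$ with factors in $\mathsf{H}(\mathbf{A})_\mathrm{fsi}$ (Gramaglia--Vaggione) and the preservation of EFD-sentences by global sections (Volger). Your treatment of $U(\varphi)$ via the subdirect representation is correct, and your patching argument for $E(\varphi)$ is essentially a correct reconstruction of the Volger half (with the caveat that these representations are in general only \emph{weak} Boolean products, so the equalizers $\{x : s_i(\bar a,\bar c')(x)=t_i(\bar a,\bar c')(x)\}$ are open rather than clopen; this is repairable, since a finite open cover of a Boolean space refines to a finite clopen partition).

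The genuine gap is in the representation step, which you yourself flag as the main obstacle but then justify incorrectly. Comer's theorem applied to the Boolean algebra of factor congruences yields the Pierce-style sheaf, and its stalks are \emph{not} in general directly indecomposable (this is a well-known subtlety; there is a whole line of work characterizing the varieties whose Pierce stalks are directly indecomposable). Worse, even where the stalks are directly indecomposable, direct indecomposability is strictly weaker than finite subdirect irreducibility, so the hypothesis $\mathsf{H}(\mathbf{A})_\mathrm{fsi} \vDash \varphi$ gives you nothing about those stalks; the phrase ``the stalks can be pinned inside this class'' is doing all the work with no argument behind it. The representation actually needed is not built from factor congruences at all: one constructs a Boolean space from a suitable family of (finitely) meet-irreducible congruences of $\mathbf{A}$, and it is in proving that the resulting stalks land in $\mathsf{H}(\mathbf{A})_\mathrm{fsi} \cup \{\text{trivial algebras}\}$ that arithmeticity and the universality of $\mathcal{V}_\mathrm{fsi} \cup \{\text{trivial algebras}\}$ are genuinely consumed. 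This is precisely the content of the Gramaglia--Vaggione theorem the paper invokes, and without it (or an equivalent, e.g.\ the Krauss--Clark global subdirect product representation) your proof does not close.
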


\begin{proof}
By \cite{GraVag96-SheafRepVarLatExp} $\mathbf{A}$ has a global representation with factors in $\mathsf{H}(\mathbf{A})_\mathrm{fsi}$, and by \cite{Volger79-PreservThmLimitsStrucGlobalSections} global representations preserve EFD-sentences.
\end{proof}

\section{The algebraic expansions of a logic}

Throughout this work, by a (sentential) {\em logic} over a language $\tau$ we mean a finitary structural consequence operator on the set of $\tau$-formulas. We refer the reader to \cite{Font16-Book-AbstractAlgebraicLogic} for definitions and results about abstract algebraic logic not explicitly mentioned in this article.

Let $L$ be an algebraizable logic and let $\Delta(x,y)$ be a set of equivalence formulas for $L$. Given a finite set of $L$-formulas $\Phi(\bar{x},\bar{z})$ on variables $x_1,\ldots,x_n, z_1,\ldots,z_m$, $n,m\in \omega$, let $f^\Phi_1, \ldots, f^\Phi_m$ be new $n$-ary symbols and let $L^\Phi$ be the least propositional logic containing $L$ such that:
\begin{align*}
& \vdash_{L^\Phi} \Phi(\bar{x},f^\Phi_1(\bar{x}), \ldots, f^\Phi_m(\bar{x})), \tag{$\mathrm{E}_\Phi$} \\
& \Phi(\bar{x},\bar{y}), \Phi(\bar{x},\bar{z}) \vdash_{L^\Phi} \Delta(\bar{y},\bar{z}). \tag{$\mathrm{U}_\Phi$}
\end{align*}
($\Delta(\bar{y},\bar{z})$ is shorthand for $\bigcup_{j=1}^m \Delta(y_j,z_j)$.) We say that $L^\Phi$ is the {\em algebraic expansion} of $L$ by $\Phi$. Recall that if $\Delta'(x,y)$ is another set of equivalence formulas for $L$, then $\Delta(x,y) \dashv\vdash_L \Delta'(x,y)$. Thus the expansion $L^\Phi$ does not depend on the choice of equivalence formulas.

Given a set $\Sigma$ of finite sets of $L$-formulas,  define $L^\Sigma$ as the least propositional logic containing $L^\Phi$ for every $\Phi \in \Sigma$. (Of course, we assume that the new symbols for each $L^\Phi$ are different.) The logic $L^\Sigma$ is called the {\em algebraic expansion} of $L$ by $\Sigma$.

Observe that, in the definition of $L^\Phi$, for the case $m = 0$ condition $\mathrm{U}_\Phi$ holds vacuously, so $L^\Phi$ is just the axiomatic extension of $L$ by $\Phi$. Hence, axiomatic extensions of $L$ are algebraic expansions of $L$. 

As mentioned in the introduction, in \cite{Caicedo04-ImpConnectivesAlgLogics} Caicedo studies expansions of finitely algebraizable logics where the behaviour of the new connectives is determined by the added axioms and rules. We call such an expansion of a logic $L$ an {\em expansion of $L$ by implicit connectives}. 

It is easy to see that the expansion $L^\Sigma$ defined above is in fact an expansion of $L$ by implicit connectives (where $(\mathrm{E}_\Phi)$ and $(\mathrm{U}_\Phi)$ correspond to new axioms and rules, respectively). As an immediate consequence of this fact we have that $L^\Sigma$ is algebraizable with the same equivalence formulas and defining equations as $L$ \cite[Theorem 1]{Caicedo04-ImpConnectivesAlgLogics}. Furthermore, the equivalent algebraic semantics of $L^\Sigma$ is the expected one \cite[Corollary 2]{Caicedo04-ImpConnectivesAlgLogics}, which in this case turns out to be an algebraic expansion of the equivalent algebraic semantics of $L$. Details follow.

Let $\mathcal{Q}$ be the equivalent algebraic semantics of $L$ via the set of equivalence formulas $\Delta(x,y)$ and the set of defining equations $\varepsilon(x)$. Given a finite set $\Phi(\bar{x},\bar{z})$ of $L$-formulas, let $e(\Phi)$ be the EFD-sentence $\forall \bar{x} \exists! \bar{z} \, \bigand \varepsilon(\Phi(\bar{x},\bar{z}))$\footnote{To avoid confusion with the connectives of the logic we use $\bigwedge$ for the logical conjunction.}. For $\Sigma$ a set of finite sets of $L$-formulas define $e(\Sigma) := \{e(\Phi): \Phi \in \Sigma\}$. Now, Corollary 2 in \cite{Caicedo04-ImpConnectivesAlgLogics} says that the algebraic expansion $\mathcal{Q}^{e(\Sigma)}$ is the equivalent algebraic semantics of $L^\Sigma$. Thus, for each algebraic expansion of $L$ we have a corresponding algebraic expansion of $\mathcal{Q}$. Of course, we can also go in the other direction. Given an EFD-sentence $\varphi := \forall \bar{x} \exists! \bar{z} \, \alpha(\bar{x},\bar{z})$, put $d(\varphi) := \Delta(\alpha(\bar{x},\bar{z}))$. Here and in the sequel $\Delta(\alpha(\bar{x},\bar{z}))$ abbreviates $\bigcup_{i=1}^k \Delta(s_i(\bar{x},\bar{z}),t_i(\bar{x},\bar{z}))$ if $\alpha(\bar{x},\bar{z})$ is the conjunction of equations $\bigand_{i=1}^k s_i(\bar{x},\bar{z}) = t_i(\bar{x},\bar{z})$. For a set $\Sigma$ of EFD-sentences, we write $d(\Sigma)$ for the set $\{d(\varphi): \varphi \in \Sigma\}$. Again, it is straightforward to check that $\mathcal{Q}^\Sigma$ is the equivalent algebraic semantics of $L^{d(\Sigma)}$. Furthermore,
\begin{itemize}
\item $L^\Sigma = L^{d(e(\Sigma))}$,
\item $\mathcal{Q}^\Sigma = \mathcal{Q}^{e(d(\Sigma))}$
\end{itemize}
for suitable $\Sigma$'s. This establishes a direct correspondence between algebraic expansions of a logic and those of its equivalent algebraic semantics. Theorem \ref{TEO: equivalencias interpretaciones} below explores this connection in greater detail. In the sequel, to avoid cumbersome notation, given a logic $L$ and a set $\Sigma$ of EFD-sentences we write $L^\Sigma$ instead of $L^{d(\Sigma)}$.

For future reference, the facts above are summarized in the following:

\begin{theorem} \label{TEO: piedra fundacional}
Let $L$ be a finitely algebraizable logic with equivalent algebraic semantics $\mathcal{Q}$. Let $\Sigma$ be a set of EFD-sentences in the language of $\mathcal{Q}$. Then the algebraic expansion $L^\Sigma$ is algebraizable with the same equivalence formulas and defining equations as $L$, and its equivalent algebraic semantics is the quasivariety $\mathcal{Q}^\Sigma$. Moreover, there is a one-to-one correspondence between the algebraic expansions of $L$ and the algebraic expansions of $\mathcal{Q}$.
\end{theorem}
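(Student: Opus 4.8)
The plan is to assemble the statement from facts already developed in the discussion preceding it, the substantive work having been carried out there via Caicedo's results. For the first two claims I would begin by recalling our standing convention that $L^\Sigma$ abbreviates $L^{d(\Sigma)}$, and that $L^{d(\Sigma)}$ is an expansion of $L$ by implicit connectives, with $(\mathrm{E}_\Phi)$ supplying the new axioms and $(\mathrm{U}_\Phi)$ the new rules. Caicedo's Theorem 1 then yields at once that $L^\Sigma$ is algebraizable with the same equivalence formulas $\Delta$ and defining equations $\varepsilon$ as $L$. To identify its equivalent algebraic semantics I would invoke Caicedo's Corollary 2, which gives $\mathcal{Q}^{e(d(\Sigma))}$; the already-noted identity $\mathcal{Q}^\Sigma = \mathcal{Q}^{e(d(\Sigma))}$ rewrites this as $\mathcal{Q}^\Sigma$, as required.

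For the one-to-one correspondence I would work at the level of logics and quasivarieties rather than of the indexing data, since distinct $\Sigma$'s may produce the same expansion. Write $\mathcal{E}_L$ for the collection of algebraic expansions of $L$ (each of the form $L^\Sigma$ with $\Sigma$ a set of finite sets of $L$-formulas) and $\mathcal{E}_{\mathcal{Q}}$ for the collection of algebraic expansions of $\mathcal{Q}$ (each of the form $\mathcal{Q}^\Sigma$ with $\Sigma$ a set of EFD-sentences). Define $F\colon \mathcal{E}_L \to \mathcal{E}_{\mathcal{Q}}$ by sending each expansion to its equivalent algebraic semantics; this is well defined, and lands in $\mathcal{E}_{\mathcal{Q}}$ because the semantics of $L^\Sigma$ is the algebraic expansion $\mathcal{Q}^{e(\Sigma)}$. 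In the reverse direction define $G\colon \mathcal{E}_{\mathcal{Q}} \to \mathcal{E}_L$ by $G(\mathcal{Q}^\Sigma) := L^{d(\Sigma)}$. It then remains to verify that $F$ and $G$ are mutually inverse, and here the two bookkeeping identities recorded before the theorem do the work: $G(F(L^\Sigma)) = G(\mathcal{Q}^{e(\Sigma)}) = L^{d(e(\Sigma))} = L^\Sigma$, and $F(G(\mathcal{Q}^\Sigma)) = F(L^{d(\Sigma)}) = \mathcal{Q}^{e(d(\Sigma))} = \mathcal{Q}^\Sigma$.

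I expect the main obstacle to be not any single computation but the well-definedness of $G$: one must ensure that $L^{d(\Sigma)}$ depends only on the quasivariety $\mathcal{Q}^\Sigma$ and not on the particular EFD-presentation $\Sigma$ chosen for it. This is precisely where the uniqueness half of the algebraization correspondence enters—two algebraizable logics sharing the equivalence formulas $\Delta$, the defining equations $\varepsilon$, and the same equivalent algebraic semantics must coincide. Since $L^{d(\Sigma)}$ is algebraizable with semantics $\mathcal{Q}^\Sigma$ and data $\Delta,\varepsilon$ by the first two claims, this uniqueness pins it down independently of $\Sigma$, making $G$ well defined; the same uniqueness also underlies the injectivity of $F$. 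With both maps well defined and the composite identities in hand, the correspondence is a genuine bijection rather than merely a surjection read off from the syntactic maps $e$ and $d$.
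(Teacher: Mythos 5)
Your proposal is correct and follows essentially the same route as the paper, which offers no separate proof but explicitly presents this theorem as a summary of the preceding discussion: Caicedo's Theorem 1 for algebraizability with the same $\Delta$ and $\varepsilon$, his Corollary 2 together with the identity $\mathcal{Q}^\Sigma = \mathcal{Q}^{e(d(\Sigma))}$ for the identification of the semantics, and the two bookkeeping identities for the correspondence. Your added attention to the well-definedness of the map $G$ via the uniqueness of an algebraizable logic with prescribed $\Delta$, $\varepsilon$ and semantics is a point the paper leaves implicit, and it is handled correctly.
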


We conclude this section with an example of a logic that has an expansion by implicit connectives that is not algebraic. Let $L_{\mathrm{int}}$ be the Intuitionistic Logic and let $L_\mathrm{int}^S$ be the extension of $L_\mathrm{int}$ by the implicit connective $S$ defined in \cite[Example 5.2]{CaiCig01-AlgApprIntuitConnectives}. The equivalent algebraic semantics of $L_\mathrm{int}^S$ is the variety $\mathcal{H}^S$ of Heyting algebras with successor. It is not hard to show that the class of Heyting-reducts of algebras in $\mathcal{H}^S$ is not an AE-subclass of $\mathcal{H}$. Thus, by Theorem \ref{TEO: piedra fundacional}, $L_\mathrm{int}^S$ cannot be an algebraic expansion of $L_\mathrm{int}$.

\subsection{The lattice of algebraic expansions}

Let $L$ be a finitely algebraizable logic with equivalent algebraic semantics $\mathcal{Q}$. 
The AE-subclasses of the quasivariety $\mathcal{Q}$ are naturally (lattice-)ordered by inclusion. In the current section we show how this ordering translates to the algebraic expansions of $\mathcal{Q}$, and thus to the algebraic expansions of $L$. For this we need to look into interpretations between logics and between classes of algebras.

Fix a countably infinite set of variables $X := \{x_1,x_2,\ldots\}$; given a language $\tau$ we write $Tm(\tau)$ for the set of $\tau$-terms over the variables in $X$. Let $\tau_1$ and $\tau_2$ be two expansions of a language $\tau$. A {\em $\tau$-translation} from $\tau_1$ into $\tau_2$ is a function $T\colon \tau_1 \to Tm(\tau_2)$ such that $T$ maps each symbol of arity $n$ to a term in the variables $x_1,\ldots,x_n$, and $T(f) = f(x_1,\ldots,x_n)$ for every $n$-ary symbol $f \in \tau$.

Let $\mathcal{K}_1$ and $\mathcal{K}_2$ be two classes of algebras over $\tau_1$ and $\tau_2$, respectively. A {\em $\tau$-interpretation} of $\mathcal{K}_1$ in $\mathcal{K}_2$ is a $\tau$-translation $T\colon \tau_1 \to Tm(\tau_2)$ such that for every member $\mathbf{A} := (A,\{g^\mathbf{A}: g \in \tau_2\})$ in $\mathcal{K}_2$, the algebra $\mathbf{A}^T := (A,\{T(f)^\mathbf{A}: f \in \tau_1\})$ belongs to $\mathcal{K}_1$. 
% When there is a $\tau$-interpretation of $\mathcal{K}_1$ in $\mathcal{K}_2$ we say that $\mathcal{K}_1$ is {\em $\tau$-interpretable} in $\mathcal{K}_2$ and write $\mathcal{K}_1 \leq_\tau \mathcal{K}_2$.
If $T$ and $S$ are $\tau$-interpretations of $\mathcal{K}_1$ in $\mathcal{K}_2$ and $\mathcal{K}_2$ in $\mathcal{K}_1$, respectively, such that the maps $\mathbf{A} \mapsto \mathbf{A}^T$ and $\mathbf{A} \mapsto \mathbf{A}^S$ are mutually inverse, we say that $\mathcal{K}_1$ and $\mathcal{K}_2$ are {\em $\tau$-term-equivalent}.

We turn now to maps between logics. A $\tau$-translation $T$ from $\tau_1$ into $\tau_2$ extends in a natural way to a mapping from $Tm(\tau_1)$ to $Tm(\tau_2)$:
\begin{itemize}
\item $T(x) = x$ for every variable $x \in X$;
\item $T(f(\varphi_1,\ldots,\varphi_n)) = T(f)(T(\varphi_1),\ldots,T(\varphi_n))$ for $f$ in $\tau_1$ of arity $n$ and $\varphi_1, \ldots \varphi_n$ in $Tm(\tau_1)$.
\end{itemize}
Given a set $\Gamma$ of $\tau_1$-terms we write $T(\Gamma)$ for $\{T(\varphi): \varphi \in \Gamma\}$.

Let $\tau_1$ and $\tau_2$ be expansions of a language $\tau$, and suppose $L_1$ and $L_2$ are logics in $\tau_1$ and $\tau_2$, respectively. A {\em $\tau$-morphism} from $L_1$ to $L_2$ is a $\tau$-translation from $\tau_1$ into $\tau_2$ such that $$\Gamma \vdash_{L_1} \varphi \text{ implies } T(\Gamma) \vdash_{L_2} T(\varphi)$$ for $\Gamma \cup \{\varphi\} \subseteq Tm(\tau_1)$. We say that $L_1$ and $L_2$ are {\em $\tau$-equipollent} (cf. \cite{CalGon05-EquipollentLogSys}) provided there are $\tau$-morphisms $T$ from $L_1$ to $L_2$ and $S$ from $L_2$ to $L_1$ such that:
\begin{itemize}
\item $\varphi \dashv\vdash_{L_1} S(T(\varphi))$ for every $\varphi \in Tm(\tau_1)$;
\item $\varphi \dashv\vdash_{L_2} T(S(\varphi))$ for every $\varphi \in Tm(\tau_2)$.
\end{itemize}

The following result shows the connection between the above defined relations.

\begin{theorem}\label{TEO: equivalencias interpretaciones}
Let $L$ be a finitely algebraizable logic in the language $\tau$ with equivalent algebraic semantics $\mathcal{Q}$. Let $\Sigma$ and $\Sigma'$ be two sets of EFD-sentences in $\tau$.
\begin{enumerate}
\item The following are equivalent:
\begin{enumerate}[$(i)$]
\item there is a $\tau$-morphism from $L^{\Sigma'}$ to $L^\Sigma$,
\item there is a $\tau$-interpretation of $\mathcal{Q}^{\Sigma'}$ in $\mathcal{Q}^\Sigma$,
\item $\mathcal{Q} \cap \Mod(\Sigma) \subseteq \mathcal{Q} \cap \Mod(\Sigma')$.
\end{enumerate}
\item The following are equivalent:
\begin{enumerate}[$(i)$]
\item $L^{\Sigma'}$ and $L^\Sigma$ are $\tau$-equipollent,
\item $\mathcal{Q}^{\Sigma'}$ and $\mathcal{Q}^\Sigma$ are $\tau$-term-equivalent,
\item $\mathcal{Q} \cap \Mod(\Sigma) = \mathcal{Q} \cap \Mod(\Sigma')$.
\end{enumerate}
\end{enumerate}
\end{theorem}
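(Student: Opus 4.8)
The plan is to establish the algebraic equivalence $(ii)\Leftrightarrow(iii)$ first, then transport it to the logical side to obtain $(i)\Leftrightarrow(ii)$, and finally read off Part~2 from Part~1. The backbone is the reduct/expansion correspondence recorded in Section~\ref{SUBSECTION:AE-classes}: since each $\varphi\in\Sigma$ is equivalent to $E(\varphi)\band U(\varphi)$ and the Skolem symbols $f^\varphi_j$ are forced to name the \emph{unique} witnesses, taking $\tau$-reducts is a bijection from $\mathcal{Q}^\Sigma$ onto $\mathcal{K}:=\mathcal{Q}\cap\Mod(\Sigma)$, with inverse $\mathbf{B}\mapsto\mathbf{B}^\Sigma$ sending each $\mathbf{B}\in\mathcal{K}$ to its unique expansion; likewise for $\Sigma'$. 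In particular the $\tau$-reduct of any member of $\mathcal{Q}^{\Sigma'}$ lies in $\mathcal{Q}\cap\Mod(\Sigma')$.

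For $(ii)\Rightarrow(iii)$ I would take $\mathbf{B}\in\mathcal{K}$, form $\mathbf{B}^\Sigma\in\mathcal{Q}^\Sigma$, and observe that $(\mathbf{B}^\Sigma)^T\in\mathcal{Q}^{\Sigma'}$ has $\tau$-reduct $\mathbf{B}$ (as $T$ fixes $\tau$); hence $\mathbf{B}\in\mathcal{Q}\cap\Mod(\Sigma')$ and the inclusion follows. The crux is $(iii)\Rightarrow(ii)$, where I must turn the \emph{implicitly} defined witnesses into genuine $\tau_\Sigma$-terms. Here I would pass to the free algebra $\mathbf{F}$ of the quasivariety $\mathcal{Q}^\Sigma$ on generators $x_1,\dots,x_n$: its $\tau$-reduct lies in $\mathcal{K}\subseteq\mathcal{Q}\cap\Mod(\Sigma')$, so for each $\varphi'\in\Sigma'$ the sentence $\varphi'$ holds in it and produces unique witnesses, which---because $\mathbf{F}$ is generated by $\bar{x}$---are names of $\tau_\Sigma$-terms $t^{\varphi'}_j(\bar{x})$. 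Setting $T(f^{\varphi'}_j):=t^{\varphi'}_j$ defines a $\tau$-translation, and the defining identities $E_{\varphi'}$ hold in $\mathbf{F}$ by construction; since an identity in $\bar{x}$ valid in $\mathbf{F}$ is valid throughout $\mathcal{Q}^\Sigma$, every $\mathbf{A}\in\mathcal{Q}^\Sigma$ satisfies the translated $E_{\Sigma'}$, while $\Gamma$ and the $\tau$-quasi-identities $U_{\Sigma'}$ already hold in its $\tau$-reduct. Thus $\mathbf{A}^T\in\mathcal{Q}^{\Sigma'}$ and $T$ is the desired interpretation.

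For $(i)\Leftrightarrow(ii)$ I would show that one and the same $\tau$-translation $T$ is a $\tau$-morphism from $L^{\Sigma'}$ to $L^\Sigma$ if and only if it is a $\tau$-interpretation of $\mathcal{Q}^{\Sigma'}$ in $\mathcal{Q}^\Sigma$. The bridge is the algebraization of $L^\Sigma$ and $L^{\Sigma'}$ granted by Theorem~\ref{TEO: piedra fundacional}, together with the reduct identity $p^{\mathbf{A}^T}(\bar{a})=T(p)^{\mathbf{A}}(\bar{a})$ and the fact that the defining equations $\varepsilon$ and equivalence formulas $\Delta$ lie in $\tau$ and so commute with $T$. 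Concretely, from $\Gamma\vdash_{L^{\Sigma'}}\varphi$ one passes to $\varepsilon[\Gamma]\vDash_{\mathcal{Q}^{\Sigma'}}\varepsilon(\varphi)$, evaluates along $\mathbf{A}\mapsto\mathbf{A}^T$, and returns via $\varepsilon[T\Gamma]\vDash_{\mathcal{Q}^\Sigma}\varepsilon(T\varphi)$ to obtain $T\Gamma\vdash_{L^\Sigma}T\varphi$; the converse runs the same computation over the quasi-identities axiomatizing $\mathcal{Q}^{\Sigma'}$, using the dual bridge $\Delta[\Theta]\vdash\Delta(p,q)\iff\Theta\vDash p\approx q$.

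Finally, Part~2 follows from Part~1 once I observe that \emph{any} pair of interpretations in both directions is automatically mutually inverse: for $\mathbf{A}\in\mathcal{Q}^\Sigma$ the algebra $(\mathbf{A}^T)^S$ again lies in $\mathcal{Q}^\Sigma$, shares the $\tau$-reduct of $\mathbf{A}$, and its new operations satisfy $E_\Sigma$, so by the uniqueness clause $U_\Sigma$ (valid in that common reduct) they coincide with those of $\mathbf{A}$. Hence the equality $\mathcal{Q}\cap\Mod(\Sigma)=\mathcal{Q}\cap\Mod(\Sigma')$ of Part~2$(iii)$ gives inclusions both ways, Part~1 gives interpretations (equivalently $\tau$-morphisms) both ways, and these compose to identities---yielding $\tau$-term-equivalence and $\tau$-equipollence; the reverse implications are immediate since each half of a term-equivalence or equipollence is in particular an interpretation or morphism. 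I expect the only real obstacle to be the step $(iii)\Rightarrow(ii)$: everything else is bookkeeping through the algebraization bridge, whereas there one must genuinely manufacture explicit terms for operations that are a priori only implicitly defined, and the free-algebra argument---with freeness propagating the witnessing identities to all of $\mathcal{Q}^\Sigma$---is what makes this possible.
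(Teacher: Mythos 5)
Your overall architecture matches the paper's: the bridge between $(i)$ and $(ii)$ is carried by the algebraization data (the fact that $\Delta$ and $\varepsilon$ live in $\tau$ and hence commute with any $\tau$-translation, plus the identity $T(p)^{\mathbf{A}}(\bar a)=p^{\mathbf{A}^T}(\bar a)$), the implication $(ii)\Rightarrow(iii)$ is the same reduct argument, and Part~2 is obtained exactly as in the paper by observing that interpretations in both directions are forced to be mutually inverse by the uniqueness clauses $U_\Sigma$, $U_{\Sigma'}$. The one place where you genuinely diverge is the step you correctly single out as the crux, $(iii)\Rightarrow(ii)$ of Part~1: the paper does not prove this at all but defers to the proof of Theorem~5 of \cite{CamVag11-AlgebraicFunctions}, whereas you supply a self-contained argument via the free algebra $\mathbf{F}$ of $\mathcal{Q}^\Sigma$ on $x_1,\dots,x_n$, whose $\tau$-reduct lies in $\mathcal{Q}\cap\Mod(\Sigma)\subseteq\mathcal{Q}\cap\Mod(\Sigma')$, so that the unique witnesses for each $\varphi'\in\Sigma'$ at the free generators are named by $\tau_\Sigma$-terms, and freeness propagates the resulting instances of $E_{\Sigma'}$ to all of $\mathcal{Q}^\Sigma$ while the $\tau$-quasi-identities $U_{\Sigma'}$ hold in every reduct. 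That argument is correct and is essentially the standard ``implicit operations on the free algebra are term operations'' device underlying the cited result; including it makes the proof self-contained at the cost of a little length. Two small points you gloss over but which are routine: in Part~2, converting $(\mathbf{A}^S)^T=\mathbf{A}$ into the equipollence condition $\varphi\dashv\vdash_{L^{\Sigma'}}S(T(\varphi))$ still requires the short computation $\mathbf{A}\vDash\gamma(\bar a)$ iff $(\mathbf{A}^S)^T\vDash\gamma(\bar a)$ iff $\mathbf{A}\vDash S(T(\gamma))(\bar a)$ for equations $\gamma$, which the paper spells out; and in $(i)\Rightarrow(ii)$ the verification that $\mathbf{A}^T\vDash E_{\varphi'}$ needs the theorem $\vdash_{L^{\Sigma'}}\Delta(\alpha(\bar x,f_1^{\varphi'}(\bar x),\dots))$ pushed through $T$, which your ``dual bridge'' remark covers only implicitly.
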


\begin{proof}
Fix a finite set of equivalence formulas $\Delta(x,y)$ and a finite set of defining equations $\varepsilon(x)$ that witness the algebraization relation between $L$ and $\mathcal{Q}$. From Theorem \ref{TEO: piedra fundacional} we know that the same sets algebraize $L^\Sigma$ and $L^{\Sigma'}$ with $\mathcal{Q}^{\Sigma}$ and $\mathcal{Q}^{\Sigma'}$ as their corresponding equivalent algebraic semantics.

Let us start by proving 1.

$(i)\mathord{\Rightarrow}(ii)$. Assume first there is a $\tau$-morphism $T$ from $L^{\Sigma'}$ to $L^\Sigma$. We prove that $T$ is also a $\tau$-interpretation of $\mathcal{Q}^{\Sigma'}$ in $\mathcal{Q}^\Sigma$. Let $\mathbf{A} \in \mathcal{Q}^\Sigma$; we aim to prove that $\mathbf{A}^T \in \mathcal{Q}^{\Sigma'}$. Since the $\tau$-reducts of $\mathbf{A}^T$ and $\mathbf{A}$ coincide, the algebra $\mathbf{A}^T$ satisfies the quasi-identities valid in $\mathcal{Q}$. Let $\varphi := \forall \bar{x} \exists! \bar{z} \; \alpha(\bar{x},\bar{z})$ be an EFD-sentence in $\Sigma'$. We show that $\mathbf{A}^T$ satisfies the identities $E_\varphi$ and the quasi-identity $U(\varphi)$. We start by showing that $\mathbf{A}^T$ satisfies $E_\varphi$. By definition, we have $\vdash_{L^{\Sigma'}} \Delta(\alpha(\bar{x},f_1^\varphi(\bar{x}), \ldots, f_m^\varphi(\bar{x}))$ and, since $T$ is a $\tau$-morphism, $\vdash_{L^\Sigma} T(\Delta(\alpha(\bar{x},f_1^\varphi(\bar{x}), \ldots, f_m^\varphi(\bar{x})))$. Both $\Delta$ and $\alpha$ are in the language $\tau$, so $\vdash_{L^\Sigma} \Delta(\alpha(\bar{x},T(f_1^\varphi)(\bar{x}), \ldots, T(f_m^\varphi)(\bar{x})))$. Then $\mathcal{Q}^\Sigma \vDash \forall \bar{x} \,  \alpha(\bar{x},T(f_m^\varphi)(\bar{x}), \ldots, T(f_m^\varphi)(\bar{x}))$, and thus $\mathbf{A}^T \vDash \forall \bar{x} \, \alpha(\bar{x},f_1^\varphi(\bar{x}), \ldots, f_m^\varphi(\bar{x}))$. This shows that $\mathbf{A}^T \vDash E_\varphi$. We show next that $\mathbf{A}^T$ satisfies $U(\varphi)$. Again, by definition, we know that $\Delta(\alpha(\bar{x},\bar{y})) \cup \Delta(\alpha(\bar{x},\bar{z})) \vdash_{L^{\Sigma'}} \Delta(\bar{y},\bar{z})$. Applying the $\tau$-morphism $T$ to this deduction immediately produces $\Delta(\alpha(\bar{x},\bar{y})) \cup \Delta(\alpha(\bar{x},\bar{z})) \vdash_{L^\Sigma} \Delta(\bar{y},\bar{z})$ since all formulas involved are in $Tm(\tau)$. By the algebraization relation $\mathcal{Q}^\Sigma \vDash U(\varphi)$ and, in particular, $\mathbf{A} \vDash U(\varphi)$. Again, noting that $\mathbf{A}$ and $\mathbf{A}^T$ have the same $\tau$-reduct, we get that $\mathbf{A}^T \vDash U(\varphi)$.

$(ii)\mathord{\Rightarrow}(i)$. Let $T$ be a $\tau$-interpretation from $\mathcal{Q}^{\Sigma'}$ in $\mathcal{Q}^\Sigma$. We claim that $T$ is a $\tau$-morphism from $L^{\Sigma'}$ to $L^\Sigma$. By the algebraizability relation, this amounts to showing that
\begin{equation} \varepsilon(\Gamma) \vDash_{\mathcal{Q}^{\Sigma'}} \varepsilon(\varphi) \text{ implies } \varepsilon(T(\Gamma)) \vDash_{\mathcal{Q}^{\Sigma}} \varepsilon(T(\varphi)) \tag{$*$}
\end{equation}
for every $\Gamma \cup \{\varphi\} \subseteq Tm(\tau_{\Sigma'})$. Since the defining equations $\varepsilon(x)$ only use symbols from $\tau$, we have that $(*)$ is equivalent to
\begin{equation}
\varepsilon(\Gamma) \vDash_{\mathcal{Q}^{\Sigma'}} \varepsilon(\varphi) \text{ implies } T(\varepsilon(\Gamma)) \vDash_{\mathcal{Q}^{\Sigma}}  T(\varepsilon(\varphi)). \tag{$**$}
\end{equation}
Now, observe that for all algebras $\mathbf{A} \in \mathcal{Q}^\Sigma$, all $\tau_{\Sigma'}$-terms $t(\bar{x})$ and all tuples $\bar{a}$ from $\mathbf{A}$ we have that $T(t)^{\mathbf{A}}(\bar{a}) = t^{\mathbf{A}^T}(\bar{a})$. From this fact it is straightforward to prove $(**)$.

$(ii)\mathord{\Rightarrow}(iii)$. Suppose $T$ is a $\tau$-interpretation from $\mathcal{Q}^{\Sigma'}$ in $\mathcal{Q}^\Sigma$, and let $\mathbf{A} \in \mathcal{Q} \cap \Mod(\Sigma)$. Let $\mathbf{A}^\Sigma$ the expansion of $\mathbf{A}$ in $\mathcal{Q}^\Sigma$. Then $(\mathbf{A}^\Sigma)^T \in \mathcal{Q}^{\Sigma'}$. Since $(\mathbf{A}^\Sigma)^T$ satisfies $\Sigma'$ and $\Sigma'$ is a set of $\tau$-sentences, $\mathbf{A}$ satisfies $\Sigma'$ as well. This shows that $\mathbf{A} \in \mathcal{Q} \cap \Mod(\Sigma')$.

$(iii)\mathord{\Rightarrow}(ii)$. This follows from the proof of \cite[Theorem 5]{CamVag11-AlgebraicFunctions}.

We turn now to the equivalences in 2.

%$(i) \to (ii)$
%
%Assume $L^{\Sigma'}$ and $L^\Sigma$ are $\tau$-equipollent and let $T$ and $S$ be $\tau$-interpretations such that $\varphi \dashv\vdash_{L^{\Sigma'}} S(T(\varphi))$ for every $\varphi \in Tm(\tau_{\Sigma'})$ and $\varphi \dashv\vdash_{L^\Sigma} T(S(\varphi))$ for every $\varphi \in Tm(\tau_\Sigma)$. By $(a)$, $T$ and $S$ are $\tau$-interpretations from $\mathcal{Q}^{\Sigma'}$ into $\mathcal{Q}^\Sigma$ and from $\mathcal{Q}^\Sigma$ into $\mathcal{Q}^{\Sigma'}$, respectively. In addition, for every $\varphi := \forall \bar{x} \exists! \bar{z} \, \alpha(\bar{x},\bar{z})$ in $\Sigma'$, we have that $\vdash_{L^{\Sigma'}} \Delta(\alpha(\bar{x},f_1^\varphi(\bar{x}),\ldots,f_m^\varphi(\bar{x})))$. Thus $\vdash_{L^{\Sigma'}} T(S(\Delta(\alpha(\bar{x},f_1^\varphi(\bar{x}),\ldots,f_m^\varphi(\bar{x})))))$. Since $\Delta$ and $\alpha$ contain only symbols from $\tau$, we deduce that $\vdash_{L^{\Sigma'}} \Delta(\alpha(\bar{x},T(S(f_1^\varphi(\bar{x}))),\ldots,T(S(f_m^\varphi(\bar{x})))))$. By the 'uniqueness' condition, $\vdash_{L^{\Sigma'}} \Delta(f_i^\varphi(\bar{x}),T(S(f_i^\varphi(\bar{x})))$ for every $i$; hence $\mathcal{Q}^{\Sigma'} \vDash \forall \bar{x} \, f_i^\varphi(\bar{x}) = T(S(f_i^\varphi(\bar{x}))$. This shows that $(\mathbf{A}^S)^T = \mathbf{A}$ for every $\mathbf{A} \in \mathcal{Q}^{\Sigma'}$. Analogously $(\mathbf{A}^T)^S = \mathbf{A}$ for every $\mathbf{A} \in \mathcal{Q}^\Sigma$. This completes the proof that $\mathcal{Q}^{\Sigma'}$ and $\mathcal{Q}^\Sigma$ are $\tau$-term-equivalent.

$(i)\mathord{\Rightarrow}(iii)$. This is immediate from 1.

$(iii)\mathord{\Rightarrow}(ii)$. Assume $\mathcal{Q} \cap \Mod(\Sigma) = \mathcal{Q} \cap \Mod(\Sigma')$. From 1. there are $\tau$-interpretations $T$ and $S$ from $\mathcal{Q}^{\Sigma'}$ in $\mathcal{Q}^\Sigma$ and from $\mathcal{Q}^{\Sigma}$ in $\mathcal{Q}^{\Sigma'}$, respectively. Fix $\mathbf{A}$ in $\mathcal{Q}^\Sigma$, and let $f$ be a symbol in $\tau_\Sigma \setminus \tau$.  By the definition of $\mathcal{Q}^\Sigma$, there is an EFD-sentence $\varphi := \forall \bar{x} \exists! \bar{z} \, \alpha(\bar{x},\bar{z})$ in $\Sigma$ such that $f = f^\varphi_i$. Put $\mathbf{B} := (\mathbf{A}^T)^S$ and note that it suffices to prove that $f^\mathbf{A} = f^\mathbf{B}$. Fix a sequence $\bar{a}$ of elements from $\mathbf{A}$. Since $\mathbf{B} \vDash E_\varphi$, we have that $\mathbf{B} \vDash \alpha(\bar{a},(f^\varphi_1)^\mathbf{B}(\bar{a}),\ldots,(f^\varphi_m)^\mathbf{B}(\bar{a}))$. As $\mathbf{A}$ and $\mathbf{B}$ have the same $\tau$-reduct and $\alpha$ is a $\tau$-formula, it follows that $\mathbf{A} \vDash \alpha(\bar{a},(f^\varphi_1)^\mathbf{B}(\bar{a}),\ldots,(f^\varphi_m)^\mathbf{B}(\bar{a}))$. We also know that $\mathbf{A} \vDash E_\varphi$, and thus $\mathbf{A} \vDash \alpha(\bar{a},(f^\varphi_1)^\mathbf{A}(\bar{a}),\ldots,(f^\varphi_m)^\mathbf{A}(\bar{a}))$. Since $\mathbf{A} \vDash U(\varphi)$, we conclude that $(f_i^\varphi)^\mathbf{A}(\bar{a}) = (f_i^\varphi)^\mathbf{B}(\bar{a})$. We proved that $(\mathbf{A}^T)^S = \mathbf{A}$. Analogously $(\mathbf{A}^S)^T = \mathbf{A}$ for every $\mathbf{A} \in \mathcal{Q}^{\Sigma'}$.

$(ii)\mathord{\Rightarrow}(i)$. Suppose $\mathcal{Q}^{\Sigma'}$ and $\mathcal{Q}^\Sigma$ are $\tau$-term-equivalent and let $T$ and $S$ be $\tau$-interpretations such that $(\mathbf{A}^S)^T = \mathbf{A}$ for every $\mathbf{A} \in \mathcal{Q}^{\Sigma'}$ and $(\mathbf{A}^T)^S = \mathbf{A}$ for every $\mathbf{A} \in \mathcal{Q}^\Sigma$. We claim that $T$ and $S$ make $L^{\Sigma'}$ and $L^\Sigma$ $\tau$-equipollent. By 1., the maps $T$ and $S$ are $\tau$-morphisms from $L^{\Sigma'}$ in $L^\Sigma$ and from $L^\Sigma$ to $L^{\Sigma'}$, respectively. It remains to show that $\varphi \dashv\vdash_{L^{\Sigma'}} S(T(\varphi))$ for every $\varphi \in Tm(\tau_{\Sigma'})$ and $\varphi \dashv\vdash_{L^\Sigma} T(S(\varphi))$ for every $\varphi \in Tm(\tau_\Sigma)$. We prove the first equivalence, the second one being analogous. By the algebraizability relation, it is enough to prove that $\varepsilon(\varphi) \Dashv\vDash_{\mathcal{Q}^{\Sigma'}} \varepsilon(S(T(\varphi)))$, or equivalently, $\varepsilon(\varphi) \Dashv\vDash_{\mathcal{Q}^{\Sigma'}} S(T(\varepsilon(\varphi)))$. In fact, we claim that $\gamma \Dashv\vDash_{\mathcal{Q}^{\Sigma'}} S(T(\gamma))$ for every $\tau_{\Sigma'}$-equation $\gamma$. Indeed, for any $\mathbf{A}$ in $\mathcal{Q}^{\Sigma'}$ and any tuple $\bar{a}$ from $\mathbf{A}$ we have that $\mathbf{A} \vDash \gamma(\bar{a})$ iff $(\mathbf{A}^S)^T \vDash \gamma(\bar{a})$ iff $\mathbf{A}^S \vDash T(\gamma)(\bar{a})$ iff $\mathbf{A} \vDash S(T(\gamma))(\bar{a})$.
\end{proof}

%Proposition \ref{PROP: term-equit expansions} says that, had we chosen a different set $\Sigma'$ of EFD-sentences such that $\mathcal{Q} \cap \Mod(\Sigma) = \mathcal{Q} \cap \Mod(\Sigma')$, the quasivarieties $\mathcal{Q}^\Sigma$ and $\mathcal{Q}^{\Sigma'}$ are term-equivalent. This, in turn, implies that the expansions $L^\Sigma$ and $L^{\Sigma'}$ are equipollent in the sense of \cite{CalGon05-EquipollentLogSys}. 

Let $L$ be a logic algebraized by a quasivariety $\mathcal{Q}$. As is the case for any quasivariety, the AE subclasses of $\mathcal{Q}$ form a lattice $\Lambda$ under inclusion. In the light of Theorem \ref{TEO: equivalencias interpretaciones}, the algebraic expansions of $L$ modulo equipollency and ordered by morphisms form a lattice as well, dually isomorphic with $\Lambda$. Thus, classifying the algebraically expandable classes of $\mathcal{Q}$ yields a classification of all algebraic expansions of $L$ up to equipollency.

\subsection{Some examples}

When the AE-subclasses of a quasivariety are known, Theorem \ref{TEO: equivalencias interpretaciones} immediately gives the description of the algebraic expansions of the corresponding logic. We present here three examples.

\subsubsection{The primal case.}

An algebra $\mathbf{A}$ is called {\em primal} if it is finite and every function $f\colon A^n \to A$ for $n \geq 1$ is a term-operation of $\mathbf{A}$. It is proved in \cite{CamVag09-AlgExpClasses} that the only AE-subclasses of $\mathsf{V}(\mathbf{A})$ for a primal $\mathbf{A}$ are $\mathsf{V}(\mathbf{A})$ and the class of trivial algebras. Thus, the only (modulo equipollency) algebraic expansions of a logic $L$ algebraized by such a variety are $L$ itself and the inconsistent logic. This applies, e.g., to Classical Propositional Logic and $m$-valued Post's logic.

\subsubsection{Gödel logic.}

Recall that Gödel Logic $L_G$ is the extension of Intuitionistic Logic by the prelinearity axiom $(x \imp y) \vee (y \imp x)$. It is known that the equivalent algebraic semantics of $L_G$ is the variety $\mathcal{H}_G$ of Gödel algebras, also known as prelinear Heyting algebras. The only AE-subclasses of $\mathcal{H}_G$ are its subvarieties (\cite{Campercholi10-Heyting-preprint}). Thus, the algebraic expansions of $L_G$ agree with its axiomatic extensions.

\subsubsection{The implicative fragment of classical logic.}

Let $L_\to$ be the implicative fragment of classical propositional logic. The equivalent algebraic semantics of $L_\to$ is the variety $\mathcal{I}$ of implication algebras. Recall that disjunction is expressible in terms of $\imp$, thus for $n \geq 2$ and $1 \leq i \leq n$ $$s_i^n(x_1,\ldots,x_n) := \bigvee_{j=1,j\ne i}^n x_j$$ is an $\{\imp\}$-term.
% Now, for $n \geq 2$ let $\varphi_n$ be the EFD-sentence $$\forall \bar{x} \exists! z \, \left( z \imp s_1^n(\bar{x}) = 1 \band \ldots \band z \imp s_n^n(\bar{x}) = 1 \band \bigvee_{i=1}^n (s_i^n(\bar{x}) \imp z) = 1 \right).$$
For each $n \geq 2$ let $$\Phi_n := \{z \imp s_i^n(\bar{x}): i \in \{1,\ldots,n\}\} \cup \{\bigvee_{i=1}^n (s_i^n(\bar{x}) \imp z)\}.$$ By definition, $L_\to^{\Phi_n}$ is the least expansion of $L_\to$ that satisfies $(\mathrm{E}_{\Phi_n})$ and $(\mathrm{U}_{\Phi_n})$. However, condition $(\mathrm{U}_{\Phi_n})$ is already true for $L_\to$. Thus $L_\to^{\Phi_n}$ is the expansion of $L_\to$ by the following axioms
\begin{itemize}
\item[] $\mu_n(\bar{x}) \imp s_i^n(\bar{x})$ for $i \in \{1,\ldots,n\}$,
\item[] $\bigvee_{i=1}^n (s_i^n(\bar{x}) \imp \mu_n(\bar{x}))$,
\end{itemize}
where $\mu_n$ is a new $n$-ary symbol.

By the characterization of the AE-subclasses of $\mathcal{I}$ given in \cite{Campercholi10-Implication} it follows from Theorem \ref{TEO: equivalencias interpretaciones} that, up to equipollency, the consistent algebraic expansions of $L_\to$ are
$$L_\to < \ldots < L_\to^{\Phi_3} < L_\to^{\Phi_2}$$
where $L < L'$ means that there is an $\{\to\}$-morphism from $L$ to $L'$ but $L$ and $L'$ are not equipollent.
Observe that $\mu_2$ is the classical conjunction and, more generally, we have that $\mu_n(\bar{x}) = \bigwedge_{i=1}^n s_i^n(\bar{x})$.

Example 3 of \cite{Caicedo04-ImpConnectivesAlgLogics} shows classical negation is implicitly definable in $L_\to$. Since none of the algebraic expansions of $L_\to$ has classical negation as a term, we have another example of an expansion by implicit connectives that is not algebraic.

\section{Algebraic expansions of Abelian $\ell$-groups and the Logic of Equilibrium}
\label{SECTION: l-groups}

In this section we give a complete description of the AE-classes of Abelian $\ell$-groups. In particular, we show that they form a lattice isomorphic with $\mathbf{1} \oplus \mathbf{2}^\omega$ (where $\oplus$ denotes the ordinal sum). In view of Theorem \ref{TEO: equivalencias interpretaciones} this produces a complete characterization of the algebraic expansions of the Logic of Equilibrium (\cite{GLS04,MOG05}).

Recall that an {\em Abelian $\ell$-group} is a structure in the language $\tau_\mathcal{G} := \{+,-,0,\vee,\wedge\}$ such that:
\begin{itemize} \setlength{\itemsep}{0pt}\setlength{\parskip}{0pt}
\item $(A,+,-,0)$ is an Abelian group,
\item $(A,\vee,\wedge)$ is a lattice,
\item $a + (b \vee c) = (a + b) \vee (a + c)$ for every $a,b,c \in A$.
\end{itemize}
Clearly Abelian $\ell$-groups form a variety, which we denote by $\mathcal{G}$. We write $\mathcal{G}_\mathrm{to}$ to denote its subclass of totally ordered members. Since all $\ell$-groups in this article are Abelian, we sometimes omit the word Abelian.

In the following lemma we collect some properties that are needed in the sequel.

\begin{lemma} \label{LEMA: props de l-grupos}
\
\begin{enumerate}
\item The variety $\mathcal{G}$ is arithmetical, that is, every member of $\mathcal{G}$ has permutable and distributive congruences.
\item \label{ITEM: minimal como univ} For every nontrivial $\mathbf{A} \in \mathcal{G}_\mathrm{to}$ we have $\mathsf{ISP_U}(\mathbf{A}) = \mathcal{G}_\mathrm{to}$.
\item \label{ITEM: fsi = to en G} An Abelian $\ell$-group is finitely subdirectly irreducible if and only if it is nontrivial and totally ordered.
\item \label{ITEM: minimal como quasi} For every nontrivial $\mathbf{A} \in \mathcal{G}$ we have $\mathsf{Q}(\mathbf{A}) = \mathsf{V}(\mathbf{A}) = \mathcal{G}$.
\end{enumerate}
\end{lemma}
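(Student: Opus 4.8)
The plan is to prove the four items using item (2) as the technical core, from which item (4) then follows almost formally. For item (1), congruence permutability is witnessed by the group-theoretic Mal'cev term $p(x,y,z) := x - y + z$, which satisfies $p(x,x,z) = z$ and $p(x,z,z) = x$ in every member of $\mathcal{G}$. For congruence distributivity I would invoke the classical fact that the congruences of an abelian $\ell$-group are exactly its $\ell$-ideals (convex subgroups that are sublattices, automatic in the abelian case), and that these form a distributive lattice; alternatively one cites the general principle that a variety with a compatible lattice reduct is congruence distributive. Together these give arithmeticity. For item (3) I would again use the identification of $\mathrm{Con}(\mathbf{A})$ with the lattice of $\ell$-ideals. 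If $\mathbf{A}$ is nontrivial and totally ordered, its $\ell$-ideals form a chain, so the zero ideal (the diagonal congruence) is meet-irreducible and $\mathbf{A}$ is finitely subdirectly irreducible. Conversely, if $\mathbf{A}$ is not totally ordered there are disjoint strictly positive elements $a \wedge b = 0$ with $a,b > 0$, whose polars are nontrivial $\ell$-ideals intersecting trivially; hence the diagonal congruence is a meet of two strictly larger congruences and $\mathbf{A}$ is not finitely subdirectly irreducible. Nontriviality excludes the trivial algebra.

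The crux is item (2). The inclusion $\mathsf{ISP_U}(\mathbf{A}) \subseteq \mathcal{G}_\mathrm{to}$ is immediate: total order is expressible by the universal sentence $\forall x\, y\,(x \vee y = x \bor x \vee y = y)$, so $\mathcal{G}_\mathrm{to}$ is a universal class containing $\mathbf{A}$ and therefore contains the least such class $\mathsf{ISP_U}(\mathbf{A})$. For the reverse inclusion I would show that every $\mathbf{B} \in \mathcal{G}_\mathrm{to}$ lies in $\mathsf{ISP_U}(\mathbf{A})$. Since in a totally ordered group $\vee$ and $\wedge$ are just $\max$ and $\min$, every finitely generated sub-$\ell$-group of $\mathbf{B}$ coincides with the finitely generated subgroup it spans, hence is a free abelian group $\mathbb{Z}^n$ carrying a compatible total order. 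By the standard local characterization of $\mathsf{ISP_U}$ it then suffices to embed each such finitely generated ordered group into an ultrapower of $\mathbf{A}$.

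The main obstacle is precisely this embedding. Fixing a positive element $a \in \mathbf{A}$, I would realize an arbitrary compatible total order on $\mathbb{Z}^n$ inside a suitable ultrapower $\mathbf{A}^I/U$ by choosing generators with incommensurable growth rates (for instance sequences $i \mapsto k\, i \cdot a$), so that their archimedean classes reproduce the lexicographic layering of the prescribed order. Verifying that every compatible total order on $\mathbb{Z}^n$ can be obtained this way over an arbitrary nontrivial $\mathbf{A}$ is where all the work lies; it amounts to the classical fact that all nontrivial totally ordered abelian groups share the same universal theory in the constant-free language $\tau_\mathcal{G}$ (the absence of a distinguished positive element renders discreteness, and hence any distinction between, say, $\mathbb{Z}$ and $\mathbb{Q}$, invisible to universal sentences). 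I would either carry out this growth-rate construction explicitly or cite the equality of universal theories.

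Finally, item (4) follows from item (2). One always has $\mathsf{Q}(\mathbf{A}) \subseteq \mathsf{V}(\mathbf{A}) \subseteq \mathcal{G}$, so it remains to prove $\mathcal{G} \subseteq \mathsf{Q}(\mathbf{A})$. By item (2), $\mathcal{G}_\mathrm{to} = \mathsf{ISP_U}(\mathbf{A}) \subseteq \mathsf{Q}(\mathbf{A})$, and by the subdirect representation theorem for abelian $\ell$-groups every member of $\mathcal{G}$ is a subdirect product of totally ordered members. Hence $\mathcal{G} \subseteq \mathsf{SP}(\mathcal{G}_\mathrm{to}) \subseteq \mathsf{SP}(\mathsf{Q}(\mathbf{A})) = \mathsf{Q}(\mathbf{A})$, and all three classes coincide.
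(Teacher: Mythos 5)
Your overall architecture matches the paper's: both proofs make item (2) the technical core, reduce it to classical facts about totally ordered abelian groups, and derive item (4) from it. The differences lie in where the classical input is placed. For item (2) the paper cites Weinberg's theorem that every finitely generated totally ordered Abelian $\ell$-group embeds in an ultrapower of $\mathbf{Z}$, concludes $\mathcal{G}_\mathrm{to} = \mathsf{ISP_U}(\mathbf{Z})$, and then obtains the statement for an arbitrary nontrivial $\mathbf{A} \in \mathcal{G}_\mathrm{to}$ from the observation that $\mathbf{Z}$ embeds in every nontrivial $\ell$-group. You instead work with an arbitrary $\mathbf{A}$ directly and defer the crux to the equality of the universal theories of all nontrivial totally ordered abelian groups; that is the same classical kernel, and the fallback citation is legitimate, but your sketched ``growth-rate'' construction is not actually carried out and, if written down, would essentially reproduce Weinberg's argument. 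Routing everything through $\mathbf{Z}$ is the cleaner packaging. For items (1) and (3) you supply explicit arguments (the Mal'cev term $x-y+z$ plus the lattice reduct; convex subgroups forming a chain in the totally ordered case, and disjoint strictly positive elements with trivially intersecting polars otherwise) where the paper simply cites Burris--Sankappanavar and Glass; those arguments are correct.

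The one genuine slip is in item (4): you invoke item (2) for an arbitrary nontrivial $\mathbf{A} \in \mathcal{G}$, but item (2) is stated (and true) only for $\mathbf{A} \in \mathcal{G}_\mathrm{to}$ --- if $\mathbf{A}$ is not totally ordered then $\mathbf{A} \in \mathsf{ISP_U}(\mathbf{A}) \setminus \mathcal{G}_\mathrm{to}$, so the equality $\mathsf{ISP_U}(\mathbf{A}) = \mathcal{G}_\mathrm{to}$ fails. The repair is exactly the paper's move: since $\mathbf{Z}$ is a subalgebra of every nontrivial member of $\mathcal{G}$ (take the sub-$\ell$-group generated by $a \vee -a$ for any $a \neq 0$), item (2) applied to $\mathbf{Z}$ gives $\mathcal{G}_\mathrm{to} = \mathsf{ISP_U}(\mathbf{Z}) \subseteq \mathsf{Q}(\mathbf{A})$, and then $\mathcal{G} = \mathsf{ISP}(\mathcal{G}_\mathrm{to}) \subseteq \mathsf{Q}(\mathbf{A})$ as in your final step.
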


\begin{proof}
Since $\ell$-groups have both group and lattice reducts, it is clear that they have permutable and distributive congruences (see \cite[Section II.12]{BurSan81-Book-CourseUnivAlg}). From the proof of Theorem 4 in \cite{Weinberg65-FreeLatOrdAbelGroupsII}, it follows that every finitely generated totally ordered Abelian $\ell$-group is embeddable in an ultrapower of $\mathbf{Z}$, the $\ell$-group of integers. This implies that $\mathcal{G}_\mathrm{to} = \mathsf{ISP_U}(\mathbf{Z})$, so 2. follows from the fact that $\mathbf{Z}$ is a subalgebra of any nontrivial $\ell$-group. Item 3. is proved in \cite[Lemma 3.5.4]{Glass99-Book-PartiallyOrderedGroups}. We prove 4.; fix a nontrivial $\mathbf{A}$ in $\mathcal{G}$ and note that $\mathsf{Q}(\mathbf{A}) \subseteq \mathsf{V}(\mathbf{A}) \subseteq \mathcal{G}$. Since $\mathbf{Z}$ is a substructure of $\mathbf{A}$, we have that $\mathcal{G}_\mathrm{to} = \mathsf{ISP_U}(\mathbf{Z}) \subseteq \mathsf{Q}(\mathbf{A})$. So $\mathsf{ISP}(\mathcal{G}_\mathrm{to}) \subseteq \mathsf{ISP}(\mathsf{Q}(\mathbf{A})) = \mathsf{Q}(\mathbf{A})$ and, as 3. says that $\mathcal{G} = \mathsf{ISP}(\mathcal{G}_\mathrm{to})$, we are done.
\end{proof}

\subsection{AE-classes of Abelian $\ell$-groups}

We proceed to characterize EFD-sentences modulo equivalence in $\mathcal{G}$. We first reduce the problem to totally ordered Abelian $\ell$-groups and then show the special role that divisible groups play as regards EFD-sentences.

\begin{lemma} \label{LEMA: equiv en Gto -> equiv en G}
Given EFD-sentences $\varphi, \psi$, if $\varphi \eq \psi$ in $\mathcal{G}_\mathrm{to}$, then $\varphi \eq \psi$ in $\mathcal{G}$.
\end{lemma}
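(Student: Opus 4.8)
The plan is to reduce satisfaction of an EFD-sentence in an arbitrary Abelian $\ell$-group to satisfaction in its totally ordered homomorphic quotients, apply the hypothesis there, and pull the conclusion back up via the preservation Lemma~\ref{LEMA: aritm + fsi univ}. So I would fix an arbitrary $\mathbf{A} \in \mathcal{G}$ and, exploiting the symmetry between $\varphi$ and $\psi$, aim only to show that $\mathbf{A} \vDash \varphi$ implies $\mathbf{A} \vDash \psi$. Since the one-element algebra satisfies every EFD-sentence, I may assume $\mathbf{A}$ is nontrivial.

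First I would verify that $\mathcal{V} := \mathcal{G}$ meets the hypotheses of Lemma~\ref{LEMA: aritm + fsi univ}. By item (1) of Lemma~\ref{LEMA: props de l-grupos} the variety $\mathcal{G}$ is arithmetical; by item (3) its finitely subdirectly irreducible members are precisely the nontrivial totally ordered $\ell$-groups, so $\mathcal{G}_\mathrm{fsi} \cup \{\text{trivial algebras}\} = \mathcal{G}_\mathrm{to}$; and by item (2) this class equals $\mathsf{ISP_U}(\mathbf{Z})$, hence is universal. Consequently, to conclude $\mathbf{A} \vDash \psi$ it is enough to establish $\mathsf{H}(\mathbf{A})_\mathrm{fsi} \vDash \psi$.

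The core of the argument is then the transfer of $\varphi$ from $\mathbf{A}$ to each $\mathbf{B} \in \mathsf{H}(\mathbf{A})_\mathrm{fsi}$. Such a $\mathbf{B}$ is, by item (3), nontrivial and totally ordered, so $\mathbf{B} \in \mathcal{G}_\mathrm{to}$; since $\varphi \eq \psi$ in $\mathcal{G}_\mathrm{to}$, proving $\mathbf{B} \vDash \varphi$ will give $\mathbf{B} \vDash \psi$. To get $\mathbf{B} \vDash \varphi$ I would split $\varphi$ as $E(\varphi) \band U(\varphi)$ and treat the two halves by different mechanisms. The existence part $E(\varphi)$ is preserved by surjective homomorphisms, so $\mathbf{A} \vDash E(\varphi)$ yields $\mathbf{B} \vDash E(\varphi)$ directly. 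The uniqueness part $U(\varphi)$ is a conjunction of quasi-identities; because $\mathbf{A}$ is nontrivial, item (4) of Lemma~\ref{LEMA: props de l-grupos} gives $\mathsf{Q}(\mathbf{A}) = \mathcal{G}$, so $\mathbf{A} \vDash U(\varphi)$ forces $\mathcal{G} \vDash U(\varphi)$ and in particular $\mathbf{B} \vDash U(\varphi)$. Hence $\mathbf{B} \vDash \varphi$, and therefore $\mathbf{B} \vDash \psi$. As $\mathbf{B}$ was arbitrary, $\mathsf{H}(\mathbf{A})_\mathrm{fsi} \vDash \psi$, and Lemma~\ref{LEMA: aritm + fsi univ} delivers $\mathbf{A} \vDash \psi$; the converse follows by interchanging $\varphi$ and $\psi$.

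The step I expect to be the crux — and the only place requiring anything $\ell$-group-specific — is the transfer of the whole sentence $\varphi$ to the finitely subdirectly irreducible quotients. A full EFD-sentence is \emph{not} preserved by homomorphic images in general, precisely because its uniqueness half is a quasi-identity and quasi-identities are not $\mathsf{H}$-stable. What rescues the argument is the strong generation property in item (4) of Lemma~\ref{LEMA: props de l-grupos}: since $\mathcal{G}$ is generated as a quasivariety by any single nontrivial member, a quasi-identity valid in one nontrivial $\ell$-group is valid throughout $\mathcal{G}$, so $U(\varphi)$ effectively behaves as if it were preserved. This is the feature that makes the reduction to $\mathcal{G}_\mathrm{to}$ go through.
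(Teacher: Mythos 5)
Your proof is correct and follows essentially the same route as the paper's: split $\varphi$ into $E(\varphi)$ (preserved by homomorphic images) and $U(\varphi)$ (a quasi-identity, pushed to all of $\mathcal{G}$ via $\mathsf{Q}(\mathbf{A})=\mathcal{G}$), transfer to $\mathsf{H}(\mathbf{A})_\mathrm{fsi}\subseteq\mathcal{G}_\mathrm{to}$, and pull $\psi$ back with Lemma~\ref{LEMA: aritm + fsi univ}. The only difference is that you explicitly verify the hypotheses of Lemma~\ref{LEMA: aritm + fsi univ}, which the paper leaves implicit.
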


\begin{proof}
Suppose $\varphi \eq \psi$ in $\mathcal{G}_\mathrm{to}$; take a nontrivial $\mathbf{A}$ in $\mathcal{G}$, and assume $\mathbf{A} \vDash \varphi$. On the one hand, since $U(\varphi)$ is a quasi-identity, Lemma \ref{LEMA: props de l-grupos}.(\ref{ITEM: minimal como quasi}) implies that  $\mathsf{H}(\mathbf{A}) \vDash U(\varphi)$. On the other hand, $\mathsf{H}(\mathbf{A}) \vDash E(\varphi)$ because $E(\varphi)$ is preserved by homomorphic images. Hence $\mathsf{H}(\mathbf{A}) \vDash \varphi$ and, in particular, $\mathsf{H}(\mathbf{A})_\mathrm{fsi} \vDash \varphi$. As, by Lemma \ref{LEMA: props de l-grupos}.(\ref{ITEM: fsi = to en G}), every member in $\mathsf{H}(\mathbf{A})_\mathrm{fsi}$ is totally ordered, we have $\mathsf{H}(\mathbf{A})_\mathrm{fsi} \vDash \psi$. So, using Lemma \ref{LEMA: aritm + fsi univ}, we are done.
\end{proof}

For each positive integer $k$ define \label{DEF: delta_k} $$\delta_k := \forall x \exists! z \, kz = x.$$ Our next step is to show that every EFD-sentence is equivalent to a $\delta_k$ in $\mathcal{G}$, which is accomplished in Theorem \ref{TEO: EFD in l-groups}.

Recall that an $\ell$-group $\mathbf{G}$ is {\em divisible} if for every $g \in G$ and every positive integer $n$, there exists $h \in G$ such that $g = nh$. Given a divisible $\ell$-group $\mathbf{D}$, since $\ell$-groups are torsion-free, we have that $\delta_k$ holds $\mathbf{D}$ for all $k$; thus, we can define the expansion $$\overline{\mathbf{D}} := (\mathbf{D}, ([\delta_k]^\mathbf{D})_{k\geq 1}).$$

The next result shows that the only functions defined by EFD-sentences in these expansions are term-operations.

\begin{theorem} \label{TEO: alg en divisibles son terminos con divisiones}
Let $\mathbf{D}$ be a totally ordered divisible $\ell$-group and let $\varphi$ be an EFD-sentence that holds in $\mathbf{D}$. Then, the functions $[\varphi]_1^\mathbf{D}, \ldots, [\varphi]_m^\mathbf{D}$ defined by $\varphi$ on $\mathbf{D}$ are term-functions on $\overline{\mathbf{D}}$.
\end{theorem}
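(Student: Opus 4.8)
The plan is to transport the problem to the reals, where the relevant piecewise-linear analysis is classical, and then transfer the resulting term back to $\mathbf{D}$ using the completeness of the theory of divisible ordered Abelian groups. If $\mathbf{D}$ is trivial the statement is immediate, so assume $\mathbf{D}$ is nontrivial. Then both $\mathbf{D}$ and $\mathbb{R}$, viewed as the $\ell$-group $(\mathbb{R},+,-,0,\vee,\wedge)$, are models of the complete theory $\mathrm{DOAG}$ of nontrivial divisible ordered Abelian groups; here the order, and hence $\vee$ and $\wedge$, are first-order interdefinable with $+,-,0$. In particular $\mathbf{D} \equiv \mathbb{R}$, so $\varphi$ holds in $\mathbb{R}$ as well, and the functions $F_j := [\varphi]_j^{\mathbb{R}}$ are total and single-valued. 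Writing $\varphi = \forall \bar{x}\,\exists! \bar{z}\,\alpha(\bar{x},\bar{z})$ with $\alpha = \bigand_{i=1}^k s_i = t_i$, I will produce over $\mathbb{R}$ a tuple of $\overline{\mathbf{D}}$-terms computing the $F_j$, and then argue that the very same terms compute the functions defined by $\varphi$ on $\mathbf{D}$.

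First I analyze the $F_j$ over $\mathbb{R}$. The graph $\{(\bar a,b): b = F_j(\bar a)\}$ is defined by $\exists \bar z\,(z_j = y \band \alpha(\bar{x},\bar{z}))$, so by quantifier elimination for $\mathrm{DOAG}$ it is \emph{semilinear}: a finite Boolean combination of homogeneous inequalities $\sum_i c_i x_i + c\,y \bowtie 0$ with integer coefficients. Being the graph of a total function, it follows that there is a finite fan of polyhedral cones covering $\mathbb{R}^n$ on each of which $F_j$ agrees with a homogeneous $\mathbb{Q}$-linear form $L_p$ (homogeneous because the defining relations are constant-free; this is also consistent with the fact that multiplication by a positive rational is an $\ell$-group automorphism, so $F_j(q\bar a)=qF_j(\bar a)$). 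In particular $F_j$ is bounded on bounded sets. Moreover the graph of $F_j$ is closed, since $\alpha$ is a finite conjunction of equalities between continuous functions; hence a limit of graph points taken along a full-dimensional cone approaching a boundary point again lies in the graph, which forces the linear pieces of adjacent cones to agree on their common faces. Therefore $F_j$ is continuous.

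Now $F_j$ is a continuous, positively homogeneous, piecewise $\mathbb{Q}$-linear function on $\mathbb{R}^n$ with finitely many linear pieces $L_1,\ldots,L_r$ of rational coefficients. By the standard representation of continuous piecewise-linear functions as lattice combinations of their linear pieces (the Baker--Beynon description of free divisible Abelian $\ell$-groups), there are index sets $J_p$ with $F_j = \bigvee_{p} \bigwedge_{q \in J_p} L_q$. Each $L_q$ is a $\mathbb{Q}$-linear form and hence a term of $\overline{\mathbf{D}}$, the rational coefficients being realized through the division operations $[\delta_k]$; thus $w_j := \bigvee_p \bigwedge_{q\in J_p} L_q$ is an $\overline{\mathbf{D}}$-term and $F_j^{\mathbb{R}} = w_j^{\overline{\mathbb{R}}}$ for the tuple $w = (w_1,\ldots,w_m)$.

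Finally I transfer back to $\mathbf{D}$. The assertion that $w$ yields a solution of the system, namely $\forall \bar{x}\,\alpha(\bar{x},w(\bar{x}))$, becomes a genuine first-order sentence $\Psi$ over $\tau_\mathcal{G}$ once each division subterm $[\delta_k](t)$ is replaced by a fresh variable $u$ constrained by $k u = t$ (such $u$ being unique by divisibility and torsion-freeness). By construction $\mathbb{R} \vDash \Psi$, so by completeness of $\mathrm{DOAG}$ also $\mathbf{D} \vDash \Psi$; that is, $w^{\overline{\mathbf{D}}}(\bar{a})$ solves $\alpha(\bar{a},\cdot)$ for every $\bar{a} \in D^n$. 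Since $\varphi$ holds in $\mathbf{D}$ this solution is unique, whence $[\varphi]_j^{\mathbf{D}} = w_j^{\overline{\mathbf{D}}}$ for each $j$, exhibiting the functions defined by $\varphi$ as term-functions on $\overline{\mathbf{D}}$. I expect the main obstacle to be the passage from ``definable'' to ``term-definable''—concretely, ruling out discontinuous semilinear solutions—which is precisely where working over $\mathbb{R}$ (closedness of the graph forcing continuity) together with the Baker--Beynon patching does the work; the completeness of $\mathrm{DOAG}$ is what allows this analytic input to be cashed out over an arbitrary divisible $\mathbf{D}$.
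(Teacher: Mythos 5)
Your proof is correct, but it takes a genuinely different route from the paper's. The paper argues abstractly: it first shows that the whole variety $\mathsf{V}(\overline{\mathbf{D}})$ satisfies $\varphi$ (via Corollary \ref{COR: los divisibles t.o. satisfacen todo lo posible} for $\mathsf{SP_U}(\overline{\mathbf{D}})$, J\'onsson's lemma to control the finitely subdirectly irreducibles, and the sheaf-representation Lemma \ref{LEMA: aritm + fsi univ}), and then invokes a general result of Campercholi--Vaggione stating that functions defined by an EFD-sentence valid in an entire variety are term functions; no explicit term is ever produced. You instead work concretely over $\mathbb{R}$: completeness of the theory of nontrivial divisible ordered Abelian groups transports $\varphi$ to $\mathbb{R}$, quantifier elimination makes each $[\varphi]_j^{\mathbb{R}}$ piecewise homogeneous $\mathbb{Q}$-linear, the closed-graph argument (note that closedness of the graph of the single coordinate $F_j$ needs the local boundedness of the \emph{full} tuple $F$ to handle the projection, which you do flag) yields continuity, the max--min (Baker--Beynon) representation produces an explicit $\overline{\mathbf{D}}$-term, and completeness transports the verification back to $\mathbf{D}$. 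Your approach buys an explicit normal form for the defined operations --- lattice combinations of rational linear forms --- at the cost of importing classical model theory of $\mathrm{DOAG}$ and the piecewise-linear representation theorem; the paper's approach is less informative about the terms but reuses machinery (arithmeticity, universality of the fsi class, the term-function lemma) that it deploys again verbatim for cancellative hoops and perfect MV-algebras. Incidentally, the paper remarks that the theorem can also be derived from Caicedo's work on implicit operations in \L ukasiewicz logic; your argument is closer in spirit to that line than to the proof the authors chose.
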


The above theorem can be derived from \cite[Theorem 20]{Caicedo07-Implicit-in-Luka}. We provide a different proof that relies on the characterization of existentially closed algebras in $\mathcal{G}_\mathrm{to}$.

Given a class $\mathcal{K}$ of algebras closed under isomorphisms and $\mathbf{A} \in \mathcal{K}$, we say that $\mathbf{A}$ is {\em existentially closed} in $\mathcal{K}$ if for every $\mathbf{B} \in \mathcal{K}$ such that $\mathbf{A} \subseteq \mathbf{B}$, every existential formula $\varphi(\bar{x})$, and every $\bar{a} \in A^n$ $$\mathbf{B} \vDash \varphi(\bar{a}) \text{ implies } \mathbf{A} \vDash \varphi(\bar{a}).$$ The next proposition characterizes the existentially closed members of the class of totally ordered $\ell$-groups. 

\begin{proposition}
Given a totally ordered $\ell$-group $\mathbf{G}$, we have that $\mathbf{G}$ is existentially closed in $\mathcal{G}_\mathrm{to}$ if and only if $\mathbf{G}$ is divisible.
\end{proposition}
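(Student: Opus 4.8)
The plan is to prove the two directions of the equivalence separately, treating the easier ``existentially closed $\Rightarrow$ divisible'' direction first. For this direction I would argue contrapositively: suppose $\mathbf{G}$ is totally ordered but not divisible, so there is some $g \in G$ and some positive integer $n$ with no $h \in G$ satisfying $nh = g$. The existential formula to use is $\varphi(x) := \exists z\, (nz = x)$. The point is that every totally ordered $\ell$-group embeds into a divisible one (its divisible hull, which stays totally ordered), so there is an extension $\mathbf{G} \subseteq \mathbf{B}$ in $\mathcal{G}_\mathrm{to}$ with $\mathbf{B} \vDash \varphi(g)$ while $\mathbf{G} \not\vDash \varphi(g)$. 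This witnesses that $\mathbf{G}$ is not existentially closed, completing the contrapositive.

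For the converse, ``divisible $\Rightarrow$ existentially closed'', the natural route is to use a model-theoretic characterization: in a universal class, the existentially closed members are exactly those that are existentially closed in some (equivalently, every) sufficiently saturated extension, and a cleaner handle is the classical fact that a structure is existentially closed in $\mathcal{K}$ precisely when it realizes every finitely satisfiable existential type over itself that can be realized in an extension in $\mathcal{K}$. Concretely, given $\mathbf{D}$ divisible and totally ordered, an extension $\mathbf{D} \subseteq \mathbf{B}$ in $\mathcal{G}_\mathrm{to}$, and a tuple $\bar a \in D^n$ with $\mathbf{B} \vDash \exists \bar z\, \psi(\bar a, \bar z)$ for quantifier-free $\psi$, I must produce witnesses inside $\mathbf{D}$. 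The key structural input I would invoke is Lemma \ref{LEMA: props de l-grupos}.(\ref{ITEM: minimal como univ}), which says $\mathcal{G}_\mathrm{to} = \mathsf{ISP_U}(\mathbf{Z})$ and hence that $\mathcal{G}_\mathrm{to}$ is a universal class whose totally ordered members all share the same universal theory; combined with the fact that a quantifier-free formula over an ordered abelian group with divisibility amounts to a finite boolean combination of linear (in)equalities with integer coefficients, solvability over $\mathbf{D}$ reduces to the Archimedean/linear-algebra content.

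The central idea is model completeness of the theory of nontrivial totally ordered divisible Abelian $\ell$-groups. Since a totally ordered divisible Abelian group is exactly a nontrivial ordered $\mathbb{Q}$-vector space, and the theory of ordered divisible Abelian groups is well known to be model complete (indeed it admits quantifier elimination), any embedding between models is elementary; in particular every existential formula with parameters in $\mathbf{D}$ that holds in an extension already holds in $\mathbf{D}$. The plan is therefore to reduce the $\ell$-group situation to this classical theory: the lattice operations $\vee, \wedge$ on a totally ordered structure are definable from the order, so a quantifier-free $\tau_\mathcal{G}$-formula is equivalent, over $\mathcal{G}_\mathrm{to}$, to a quantifier-free formula in the language of ordered groups, and then quantifier elimination for ordered divisible Abelian groups finishes the job.

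The main obstacle I anticipate is handling the case distinction and the reduction cleanly, rather than the logical skeleton. Two points require care. First, the extension $\mathbf{B}$ need not be divisible, so I cannot directly apply model completeness to $\mathbf{B}$; the remedy is to pass to the divisible hull of $\mathbf{B}$ (again totally ordered), realize the witness there, and then use that $\mathbf{D}$ is existentially closed in ordered divisible groups to pull it back --- so I should set up the argument to factor through divisible hulls on both sides. Second, I must be careful that ``existential formula'' in the $\tau_\mathcal{G}$-language is correctly translated: the lattice terms must be unwound using total order before quantifier elimination applies, and I should confirm that solvability of the resulting system of linear inequalities over $\mathbb{Q}$-vector spaces is genuinely expressible by the quantifier-free data, which is where the divisibility of $\mathbf{D}$ is essential (it guarantees the rational solutions actually lie in $D$). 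Once these reductions are in place, the existentially closed property follows immediately from quantifier elimination.
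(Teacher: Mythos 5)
Your proposal is correct and follows essentially the same route as the paper, which simply cites Robinson's classical result on ordered divisible Abelian groups (Theorem 3.1.13 of \emph{Complete Theories}) and notes that the lattice operations are quantifier-free definable from the total order; your two directions (divisible hulls for the forward implication, model completeness/quantifier elimination of ordered divisible Abelian groups for the converse) are exactly the content of that citation, just unpacked. The only caveat, which you inherit from the statement itself rather than introduce, is that the trivial group is divisible but not existentially closed, so ``divisible'' should be read as ``nontrivial and divisible.''
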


\begin{proof}
The result follows from \cite[Theorem 3.1.13]{Robinson56-Book-CompleteTheories} when considering totally ordered $\ell$-groups as structures on a purely relational language $\tau$. However, since the operations $+, -, \vee, \wedge$ are definable by quantifier-free $\tau$-formulas, the statement follows.
%Assume $\mathbf{D} \leq \mathbf{G}$, for totally ordered $\ell$-groups $\mathbf{D}$ and $\mathbf{G}$, where $\mathbf{D}$ is divisible. Now consider a divisible extension $\mathbf{D}'$ of $\mathbf{G}$, so $\mathbf{D} \leq \mathbf{G} \leq \mathbf{D}'$. By \cite[p.390, Exercise 11]{Hodges93-Book-ModelTheory}, the class of divisible totally ordered $\ell$-groups is model-complete; hence, the embedding $\mathbf{D} \leq \mathbf{D}'$ is an elementary embedding. Thus, given an existential formula $\varphi(\bar{x})$ and elements $\bar{c} \in G^n$, if $\mathbf{G} \vDash \varphi(\bar{c})$, then $\mathbf{D}' \vDash \varphi(\bar{c})$, and, in turn, $\mathbf{D} \vDash \varphi(\bar{c})$.
%
%Hodges p. 390, Ejercicio 11: $T_{doa}$ es el model companion de $T_{oa}$. Esto significa que:
%\begin{itemize}
%\item $T_{doa}$ es model complete (toda inmersión es elemental).
%\item Todo modelo de $T_{doa}$ tiene una extensión que es modelo de $T_{oa}$ y recíprocamente.
%\end{itemize}
%Entonces, si $\mathbf{D} \leq \mathbf{G}$, construimos $\mathbf{D}'$ tal que $\mathbf{D} \leq \mathbf{G} \leq \mathbf{D}'$. Luego, por Burris (Prop. 1.2.2), como $\mathbf{D} \preceq \mathbf{D}'$ y $\mathbf{G} \leq \mathbf{D}'$, resulta que $\mathbf{D} \leq_{ec} \mathbf{G}$.
\end{proof}

\begin{corollary} \label{COR: divisible < t.o.}
Let $\mathbf{D} \subseteq \mathbf{G}$ be totally ordered $\ell$-groups and assume $\mathbf{D}$ is divisible. Then, for every EFD-sentence $\varphi$ we have that $\mathbf{G} \vDash \varphi$ implies $\mathbf{D} \vDash \varphi$.
\end{corollary}

\begin{proof}
Suppose $\mathbf{G}$ satisfies the EFD-sentence $\varphi$. Since $U(\varphi)$ is universal, we have $\mathbf{D} \vDash \varphi$, and the fact that $\mathbf{D}$ is existentially closed implies $\mathbf{D} \vDash E(\varphi)$.
\end{proof}

\begin{corollary} \label{COR: los divisibles t.o. satisfacen todo lo posible}
If $\varphi$ is an EFD-sentence with a nontrivial model in $\mathcal{G}$, then every totally ordered divisible $\ell$-group satisfies $\varphi$.
\end{corollary}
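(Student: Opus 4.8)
The plan is to split $\varphi$ into its two halves $E(\varphi)$ and $U(\varphi)$ and treat them by different means: the nontrivial model handles $U(\varphi)$ outright and lets me manufacture a \emph{totally ordered} model of the full sentence $\varphi$, which I then transfer to an arbitrary totally ordered divisible group by an ultrapower argument combined with Corollary~\ref{COR: divisible < t.o.}.

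First I would dispose of $U(\varphi)$. Let $\mathbf{A}$ be a nontrivial model of $\varphi$ in $\mathcal{G}$, so $\mathbf{A} \vDash U(\varphi)$. Since $U(\varphi)$ is a conjunction of quasi-identities and $\mathsf{Q}(\mathbf{A}) = \mathcal{G}$ by Lemma~\ref{LEMA: props de l-grupos}.(\ref{ITEM: minimal como quasi}), we obtain $\mathcal{G} \vDash U(\varphi)$. In particular every totally ordered divisible $\ell$-group satisfies $U(\varphi)$, so the whole problem reduces to establishing $E(\varphi)$ in such groups.

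Next I would produce a nontrivial totally ordered model of $\varphi$. By the subdirect representation theorem, $\mathbf{A}$ is a subdirect product of subdirectly irreducible $\ell$-groups, each of which is a homomorphic image of $\mathbf{A}$ and, by Lemma~\ref{LEMA: props de l-grupos}.(\ref{ITEM: fsi = to en G}), nontrivial and totally ordered. As $\mathbf{A}$ is nontrivial this family is nonempty; fix one such factor $\mathbf{G}$. Because $E(\varphi)$ is preserved by homomorphic images and $\mathbf{A} \vDash E(\varphi)$, we get $\mathbf{G} \vDash E(\varphi)$, and together with $\mathbf{G} \vDash U(\varphi)$ (from the previous step) this exhibits a nontrivial $\mathbf{G} \in \mathcal{G}_\mathrm{to}$ with $\mathbf{G} \vDash \varphi$.

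Finally I would transfer $\varphi$ to an arbitrary totally ordered divisible $\mathbf{D}$. Since $\mathbf{G}$ is nontrivial and totally ordered, Lemma~\ref{LEMA: props de l-grupos}.(\ref{ITEM: minimal como univ}) gives $\mathcal{G}_\mathrm{to} = \mathsf{ISP_U}(\mathbf{G})$, so $\mathbf{D}$ embeds, up to isomorphism, into an ultrapower $\mathbf{G}^\ast$ of $\mathbf{G}$. As $\varphi$ is a first-order sentence, {\L}o\'{s}'s theorem gives $\mathbf{G}^\ast \vDash \varphi$, and $\mathbf{G}^\ast$ is again totally ordered. Applying Corollary~\ref{COR: divisible < t.o.} to the inclusion $\mathbf{D} \subseteq \mathbf{G}^\ast$, with $\mathbf{D}$ divisible, then yields $\mathbf{D} \vDash \varphi$. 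The step I expect to demand the most care is this last transfer: $E(\varphi)$, being an $\exists$-statement, is not preserved downward along arbitrary inclusions, so one cannot simply read it off inside $\mathbf{G}^\ast$; it is precisely the divisibility of $\mathbf{D}$ (equivalently, its existential closedness in $\mathcal{G}_\mathrm{to}$) that licenses the descent, which is exactly what Corollary~\ref{COR: divisible < t.o.} packages.
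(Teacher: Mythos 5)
Your proposal is correct and follows essentially the same route as the paper's proof: pass to a nontrivial totally ordered homomorphic image (where $E(\varphi)$ survives and $U(\varphi)$ holds because $\mathsf{Q}(\mathbf{A})=\mathcal{G}$), embed $\mathbf{D}$ into an ultrapower of it via Lemma~\ref{LEMA: props de l-grupos}.(\ref{ITEM: minimal como univ}), and descend with Corollary~\ref{COR: divisible < t.o.}. Your only addition is spelling out, via the subdirect representation theorem, why such a totally ordered quotient exists, which the paper leaves implicit.
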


\begin{proof}
Assume $\mathbf{H}$ is a nontrivial model of $\varphi$ and let $\mathbf{H}'$ be a nontrivial totally ordered homomorphic image of $\mathbf{H}$. Clearly $\mathbf{H}' \vDash E(\varphi)$ and, since $\mathsf{Q}(\mathbf{H})$ is the class of all $\ell$-groups, we have $\mathbf{H}' \vDash U(\varphi)$. Hence $\mathbf{H}' \vDash \varphi$.
By Lemma \ref{LEMA: props de l-grupos}, we know that $\mathsf{ISP_U}(\mathbf{H}') = \mathcal{G}_\mathrm{to}$. Thus, if $\mathbf{D}$ is a totally ordered divisible $\ell$-group, there is $\mathbf{G} \in \mathsf{P_U}(\mathbf{H}')$ such that $\mathbf{D} \subseteq \mathbf{G}$. Finally, Corollary \ref{COR: divisible < t.o.} yields $\mathbf{D} \vDash \varphi$.
\end{proof}

After this sequence of results we are ready to present:

\begin{proof}[Proof of Theorem \ref{TEO: alg en divisibles son terminos con divisiones}]
Assume $\mathbf{D} \vDash \varphi$ for some EFD-sentence $\varphi$, $\mathbf{D}$ nontrivial. Observe that $\mathsf{V}(\overline{\mathbf{D}})$ is arithmetical since arithmeticity is witnessed by a Pixley term (see \cite{BurSan81-Book-CourseUnivAlg}).

We prove first that $\mathsf{V}(\overline{\mathbf{D}})_\mathrm{fsi} \vDash \varphi$. Since all divisions are basic operations of $\overline{\mathbf{D}}$, we have that the algebras in $\mathsf{SP_U}(\overline{\mathbf{D}})$ are totally ordered divisible $\ell$-groups, and Corollary \ref{COR: los divisibles t.o. satisfacen todo lo posible} produces $\mathsf{SP_U}(\overline{\mathbf{D}}) \vDash \varphi$. Clearly $\mathsf{HSP_U}(\overline{\mathbf{D}}) \vDash E(\varphi)$ and, since $\mathcal{G} = \mathsf{Q}(\mathbf{D}) \vDash U(\varphi)$, it follows that $\mathsf{HSP_U}(\overline{\mathbf{D}})$ satisfies $U(\varphi)$ as well. Thus, $\mathsf{HSP_U}(\overline{\mathbf{D}}) \vDash \varphi$, and we are done since $\mathsf{V}(\overline{\mathbf{D}})_\mathrm{fsi} \subseteq \mathsf{HSP_U}(\overline{\mathbf{D}})$ by Jónsson's lemma (see \cite{Jonsson67-AlgCongLatDistributive}).

Since $\ell$-group congruences are compatible with division operations, we have that the congruences of algebras in $\mathsf{V}(\overline{\mathbf{D}})$ agree with the congruences of their $\ell$-group reducts, and so, $\mathsf{V}(\overline{\mathbf{D}})_\mathrm{fsi}$ is a universal class. Thus, by Lemma \ref{LEMA: aritm + fsi univ}, $\mathsf{V}(\overline{\mathbf{D}}) \vDash \varphi$, and the conclusion follows now from \cite[Lemma 3]{CamVag11-AlgebraicFunctions}.
\end{proof}

Given a positive integer $k$ and a term $t(\bar{x})$ in $\tau_\mathcal{G}$, let $$\delta_{k,t} := \forall \bar{x} \exists ! z \, kz = t(\bar{x}).$$ Observe that $U(\delta_{k,t})$ is valid in $\mathcal{G}$ because Abelian $\ell$-groups are torsion-free.

We denote by $\mathcal{D}$ the class of expansions $\overline{\mathbf{D}}$ of divisible $\ell$-groups $\mathbf{D} \in \mathcal{G}$.  We write $\tau_{\mathcal{D}}$ for the language of the algebras in the class $\mathcal{D}$.

\begin{lemma} \label{LEMA: equivalencia con fi k t}
Given a term $s(\bar{x})$ in $\tau_{\mathcal{D}}$, there is a term $t(\bar{x})$ in $\tau_\mathcal{G}$ and a positive integer $k$ such that $k \, s(\bar{x}) = t(\bar{x})$ is valid in $\mathcal{D}$. Hence, for any divisible $\mathbf{D} \in \mathcal{G}$ the term-function $s^{\overline{\mathbf{D}}}$ agrees with the function $[\delta_{k,t}]^\mathbf{D}$.
\end{lemma}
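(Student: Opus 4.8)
The plan is to establish the existence of $k$ and $t$ by structural induction on the $\tau_\mathcal{D}$-term $s(\bar x)$, and then to read off the ``hence'' clause from the uniqueness built into $\delta_{k,t}$. Recall that $\tau_\mathcal{D}$ is $\tau_\mathcal{G}$ together with one unary division symbol for each $k \geq 1$; I write $[\delta_k]$ for this symbol, so that $k\,[\delta_k](y) = y$ holds throughout $\mathcal{D}$ (this being the defining identity $E_{\delta_k}$). For the base cases $s = x_i$ and $s = 0$ one takes $k = 1$ and $t = s$, which already lies in $\tau_\mathcal{G}$.

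For the inductive step I assume the claim for the immediate subterms of $s$ and split on the outermost operation. If $s = s_1 + s_2$, choose $k_i, t_i$ with $k_i s_i = t_i$ valid in $\mathcal{D}$ and set $k := k_1 k_2$; then $k s = k_2 t_1 + k_1 t_2$, and the right-hand side is a $\tau_\mathcal{G}$-term since integer multiples are iterated sums. The case $s = -s_1$ is handled by $k := k_1$, $t := -t_1$. The division case $s = [\delta_n](s_1)$ is the cleanest: combining $k_1 s_1 = t_1$ with $n\,[\delta_n](s_1) = s_1$ gives $k_1 n\,[\delta_n](s_1) = k_1 s_1 = t_1$, so $k := k_1 n$ and $t := t_1$ work.

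The one case that needs a genuine $\ell$-group identity is $s = s_1 \vee s_2$ (and dually $s = s_1 \wedge s_2$), where I use that integer multiplication distributes over the lattice operations, i.e.\ $n(a \vee b) = na \vee nb$ holds in every member of $\mathcal{G}$ for each $n \geq 1$. I would justify this by observing that it is an identity valid in every totally ordered Abelian group — there $a \vee b \in \{a,b\}$ and multiplication by a positive integer is order-preserving — and that every Abelian $\ell$-group is a subdirect product of totally ordered ones via its finitely subdirectly irreducible, hence totally ordered, quotients (Lemma \ref{LEMA: props de l-grupos}.(\ref{ITEM: fsi = to en G})); since $\mathsf{I}$, $\mathsf{S}$, $\mathsf{P}$ preserve identities, validity in $\mathcal{G}_\mathrm{to}$ yields validity in all of $\mathcal{G}$, a fortiori in $\mathcal{D}$. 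Granting this, with $k := k_1 k_2$ one computes $k s = k s_1 \vee k s_2 = k_2 t_1 \vee k_1 t_2 =: t$, again a $\tau_\mathcal{G}$-term, and the meet case is identical.

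Finally, for the ``hence'' clause, fix a divisible $\mathbf{D} \in \mathcal{G}$ and a tuple $\bar a$. The identity just proved gives $k\,s^{\overline{\mathbf{D}}}(\bar a) = t^\mathbf{D}(\bar a)$, so $s^{\overline{\mathbf{D}}}(\bar a)$ is \emph{a} solution $z$ of $kz = t^\mathbf{D}(\bar a)$. Since $U(\delta_{k,t})$ is valid in $\mathcal{G}$ (Abelian $\ell$-groups being torsion-free), this solution is unique and thus equals $[\delta_{k,t}]^\mathbf{D}(\bar a)$ by definition, whence $s^{\overline{\mathbf{D}}} = [\delta_{k,t}]^\mathbf{D}$. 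The main obstacle is precisely the lattice case: everything hinges on the identity $n(a \vee b) = na \vee nb$, whose verification rests on the subdirect representation of Abelian $\ell$-groups by totally ordered ones; the remaining cases are purely mechanical bookkeeping of denominators.
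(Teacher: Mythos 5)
Your proof is correct and follows exactly the route the paper intends: the paper's proof is just the one-line remark ``induction on the structure of $s(\bar{x})$,'' and your argument is a careful filling-in of that induction, including the only non-mechanical step, namely the identity $n(a\vee b)=na\vee nb$ justified via the subdirect representation of Abelian $\ell$-groups by totally ordered ones. The treatment of the ``hence'' clause via $U(\delta_{k,t})$ and torsion-freeness is likewise the intended one.
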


\begin{proof}
It follows by induction on the structure of $s(\bar{x})$.
\end{proof}

\begin{lemma} \label{LEMA: reduccion a phi k t}
Let $\varphi$ be an EFD-sentence with a nontrivial model in $\mathcal{G}$. Then there are positive integers $k_1, \dots, k_m$ and terms $t_1, \dots, t_m$ in $\tau_\mathcal{G}$ such that $\varphi \eq \displaystyle \bigand_{j=1}^m \delta_{k_j,t_j}$ in $\mathcal{G}$.
\end{lemma}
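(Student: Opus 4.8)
The plan is to combine Theorem~\ref{TEO: alg en divisibles son terminos con divisiones} with Lemma~\ref{LEMA: equivalencia con fi k t} to extract the divisions explicitly, and then promote the resulting equivalence from totally ordered divisible $\ell$-groups up to all of $\mathcal{G}$ using Lemmas~\ref{LEMA: equiv en Gto -> equiv en G} and~\ref{LEMA: props de l-grupos}. Write $\varphi := \forall \bar{x} \exists! \bar{z}\, \alpha(\bar{x},\bar{z})$ with $z_1,\dots,z_m$ the existentially quantified variables. By Corollary~\ref{COR: los divisibles t.o. satisfacen todo lo posible}, since $\varphi$ has a nontrivial model, every totally ordered divisible $\ell$-group $\mathbf{D}$ satisfies $\varphi$, so the coordinate functions $[\varphi]_1^\mathbf{D},\dots,[\varphi]_m^\mathbf{D}$ are well defined on each such $\mathbf{D}$.

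First I would fix a single totally ordered divisible $\mathbf{D}$ (for instance $\mathbf{D} = \overline{\mathbf{Q}}$) and apply Theorem~\ref{TEO: alg en divisibles son terminos con divisiones}: each $[\varphi]_j^\mathbf{D}$ is a term-function of $\overline{\mathbf{D}}$, say realized by a $\tau_{\mathcal{D}}$-term $s_j(\bar{x})$. Then Lemma~\ref{LEMA: equivalencia con fi k t} yields, for each $j$, a positive integer $k_j$ and a $\tau_\mathcal{G}$-term $t_j(\bar{x})$ with $k_j\, s_j(\bar{x}) = t_j(\bar{x})$ valid in $\mathcal{D}$, and moreover $s_j^{\overline{\mathbf{D}}}$ agrees with $[\delta_{k_j,t_j}]^\mathbf{D}$. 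Hence on every divisible $\mathbf{D}\in\mathcal{G}$ the function picking out $z_j$ coincides with the unique solution of $k_j z = t_j(\bar{x})$. The candidate equivalence is therefore $\varphi \eq \bigand_{j=1}^m \delta_{k_j,t_j}$.

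The core of the argument is to verify this equivalence in $\mathcal{G}_\mathrm{to}$, after which Lemma~\ref{LEMA: equiv en Gto -> equiv en G} transfers it to $\mathcal{G}$. So I would take an arbitrary $\mathbf{G}\in\mathcal{G}_\mathrm{to}$ and argue both directions. Note that $U(\varphi)$ and each $U(\delta_{k_j,t_j})$ are automatically valid in all of $\mathcal{G}$ (the latter because $\ell$-groups are torsion-free, as observed just before the lemma; the former holds by Lemma~\ref{LEMA: props de l-grupos}.(\ref{ITEM: minimal como quasi}) since $\mathsf{Q}(\mathbf{G})=\mathcal{G}$). Thus both $\varphi$ and $\bigand_j \delta_{k_j,t_j}$ reduce, over $\mathcal{G}_\mathrm{to}$, to their existential parts $E(\varphi)$ and $\bigand_j E(\delta_{k_j,t_j})$, and it suffices to show these existential statements are equivalent. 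The clean way to do this is to embed $\mathbf{G}$ in its divisible hull $\mathbf{D}_\mathbf{G}$ (a totally ordered divisible $\ell$-group containing $\mathbf{G}$), where by the previous paragraph the solution of $\alpha$ and the joint solutions of the $k_j z = t_j$ literally coincide as elements of $D_\mathbf{G}$. Then I would use that $\mathbf{D}_\mathbf{G}$ is existentially closed in $\mathcal{G}_\mathrm{to}$ (the Proposition) together with Corollary~\ref{COR: divisible < t.o.}, so that the existence of a solution in $\mathbf{G}$, in $\mathbf{D}_\mathbf{G}$, and the divisibility-displaying witnesses all match up: a tuple $\bar{g}\in G^n$ has an $\alpha$-witness $\bar{c}\in G^m$ iff the unique divisible witness $\bar{c}\in D_\mathbf{G}$ actually lies in $G$, iff each $t_j(\bar{g})$ is divisible by $k_j$ inside $\mathbf{G}$, iff $\bar{g}$ has a $\bigand_j(k_j z_j = t_j)$-witness in $\mathbf{G}$. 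This pointwise matching gives $\mathbf{G}\vDash E(\varphi)$ iff $\mathbf{G}\vDash \bigand_j E(\delta_{k_j,t_j})$.

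The step I expect to be the main obstacle is the bookkeeping in that last matching, specifically making rigorous the claim that the \emph{same} term-data $(k_j,t_j)$ extracted from one fixed $\mathbf{D}$ governs the behavior of $\varphi$ on \emph{every} totally ordered $\mathbf{G}$. The subtlety is that $s_j$ computes $z_j$ as a term in $\overline{\mathbf{D}}$, but on a non-divisible $\mathbf{G}$ the expression $s_j(\bar{g})$ need not be defined; what transfers is only the equational relation $k_j z_j = t_j(\bar{x})$, and one must check that this relation faithfully captures when a genuine $\alpha$-witness exists in $\mathbf{G}$ rather than merely in $\mathbf{D}_\mathbf{G}$. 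Controlling this requires leaning on the fact that $\mathbf{D}_\mathbf{G}$ is existentially closed and that $\alpha$ is quantifier-free over $\tau_\mathcal{G}$, so that satisfaction of $\alpha$ is absolute between $\mathbf{G}$ and its extensions; once that absoluteness is in hand, the equivalence of the existential parts follows and the proof closes by invoking Lemma~\ref{LEMA: equiv en Gto -> equiv en G}.
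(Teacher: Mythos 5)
Your first half matches the paper exactly: fix one nontrivial totally ordered divisible $\mathbf{D}$, use Corollary \ref{COR: los divisibles t.o. satisfacen todo lo posible} to get $\mathbf{D} \vDash \varphi$, then Theorem \ref{TEO: alg en divisibles son terminos con divisiones} and Lemma \ref{LEMA: equivalencia con fi k t} to produce the data $(k_j,t_j)$. The problem lies in the transfer step, and you have correctly located the obstacle but not actually closed it. What you need is that the biconditional $\forall \bar{x}\bar{z}\,\bigl(\alpha(\bar{x},\bar{z}) \biff \bigand_{j} k_j z_j = t_j(\bar{x})\bigr)$, which your extraction establishes only in the one fixed $\mathbf{D}$, holds in \emph{every} (totally ordered) $\ell$-group --- in particular in the divisible hull $\mathbf{D}_{\mathbf{G}}$ of an arbitrary $\mathbf{G} \in \mathcal{G}_\mathrm{to}$. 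Your proposed justification (existential closedness of $\mathbf{D}_{\mathbf{G}}$ plus absoluteness of the quantifier-free $\alpha$ between $\mathbf{G}$ and its extensions) only relates $\mathbf{G}$ to $\mathbf{D}_{\mathbf{G}}$; it says nothing about why the term-data computed in $\overline{\mathbf{Q}}$ governs $\varphi$ on $\mathbf{D}_{\mathbf{G}}$, which is a different algebra, generally much larger than $\mathbf{Q}$. Without that, the middle link of your chain (``the unique divisible witness lies in $G$ iff each $t_j(\bar{g})$ is divisible by $k_j$'') is unsupported.

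The paper closes this with one observation you do not use: the biconditional above is (equivalent to) a conjunction of quasi-identities, and since the fixed $\mathbf{D}$ is nontrivial, Lemma \ref{LEMA: props de l-grupos}.(\ref{ITEM: minimal como quasi}) gives $\mathsf{Q}(\mathbf{D}) = \mathcal{G}$, so the biconditional is valid throughout $\mathcal{G}$. (Equivalently, it is a universal sentence valid in $\mathbf{Q}$, hence in $\mathsf{ISP_U}(\mathbf{Q}) = \mathcal{G}_\mathrm{to}$ by Lemma \ref{LEMA: props de l-grupos}.(\ref{ITEM: minimal como univ}).) Once the defining conditions are pointwise equivalent in every algebra of $\mathcal{G}$, and both $U(\varphi)$ and the $U(\delta_{k_j,t_j})$ hold throughout $\mathcal{G}$, the equivalence $\varphi \eq \bigand_j \delta_{k_j,t_j}$ follows immediately in each algebra --- no divisible hulls, no appeal to Lemma \ref{LEMA: equiv en Gto -> equiv en G}, and no witness bookkeeping are needed. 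So the fix is a one-line insertion, but as written your argument rests the crucial step on a tool that does not reach it.
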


\begin{proof}
Fix $\varphi := \forall x_1 \dots x_n \exists! z_1 \dots z_m \, \alpha(\bar{x},\bar{z})$. Note that $\mathcal{G} \vDash U(\varphi)$ since $\varphi$ has a nontrivial model in $\mathcal{G}$ and $\mathcal{G}$ has no proper subquasivarieties. Let $\mathbf{D}$ be a nontrivial totally ordered divisible $\ell$-group. By Corollary \ref{COR: los divisibles t.o. satisfacen todo lo posible}, we have that $\mathbf{D} \vDash \varphi$. So Theorem \ref{TEO: alg en divisibles son terminos con divisiones} provides terms $s_1, \dots, s_m$ in $\tau_{\mathcal{D}}$ such that $[\varphi]_j^\mathbf{D} = s_j^{\overline{\mathbf{D}}}$ for $j \in \{1,\dots,m\}$. Moreover, by Lemma \ref{LEMA: equivalencia con fi k t}, there are positive integers $k_1,\dots,k_m$ and terms $t_1, \dots, t_m$ in $\tau_\mathcal{G}$ such that $s_j^{\overline{\mathbf{D}}} = [\delta_{k_j,t_j}]^\mathbf{D}$. This shows that $\mathbf{D} \vDash \forall \bar{x} \bar{z} \, ( \alpha(\bar{x},\bar{z}) \biff \displaystyle \bigand_{j=1}^m k_jz_j = t_j(\bar{x}) )$, and again using that $\mathcal{G}$ has no proper subquasivarieties, we have $\mathcal{G} \vDash \forall \bar{x} \bar{z} \, (\alpha(\bar{x},\bar{z}) \biff \displaystyle \bigand_{j=1}^m k_jz_j = t_j(\bar{x}))$. Finally, since $\mathcal{G}$ satisfies $U(\varphi)$ and $U(\delta_{k_j,t_j})$ for $j \in \{1,\dots,m\}$, it follows that $\varphi \eq \displaystyle \bigand_{j=1}^m \delta_{k_j,t_j}$ in $\mathcal{G}$.
\end{proof}

In the following, by a {\em system of linear inequalities} we mean a finite conjunction of inequalities of the form $a_1x_1 + \dots + a_nx_n \geq 0$ where $a_1,\dots,a_n$ are integers. (Note that such a system can be written as a conjunction of equations in $\tau_\mathcal{G}$.)

We say that a system of linear inequalities $\alpha(\bar{x})$ is {\em full-dimensional} on an Abelian $\ell$-group $\mathbf{G}$ if there is no $(a_1,\dots,a_n) \in \mathbb{Z}^n \setminus \{\bar{0}\}$ such that $\mathbf{G} \vDash \forall \bar{x} \, (\alpha(\bar{x}) \bimp \sum a_ix_i = 0)$. That is, the system $\alpha(\bar{x})$ imposes no linear dependencies on its solutions in $\mathbf{G}$. Observe that Lemma \ref{LEMA: props de l-grupos}.(\ref{ITEM: minimal como quasi}) implies that $\alpha(\bar{x})$ is full-dimensional on some nontrivial $\ell$-group $\mathbf{G}$ if and only if it is full-dimensional on every nontrivial $\ell$-group. Hence, we say that $\alpha(\bar{x})$ is {\em full-dimensional} provided it is full-dimensional on some nontrivial $\ell$-group.

\begin{lemma} \label{LEMA: independiente <-> genera}
A system of inequalities $\alpha(\bar{x})$ is full-dimensional if and only if for every totally ordered $\ell$-group $\mathbf{G}$ the set $\{\bar{g} \in G^n: \mathbf{G} \vDash \alpha(\bar{g})\}$ generates $\mathbf{G}^n$ as an Abelian group.
\end{lemma}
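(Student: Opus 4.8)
The plan is to translate the statement into the geometry of the rational polyhedral cone cut out by $\alpha$. Write $\alpha(\bar x)$ as a conjunction $\bigand_{j=1}^r \ell_j(\bar x) \geq 0$ of inequalities with integer linear forms $\ell_j$, collect their coefficients into an integer matrix $A$, and for a totally ordered $\ell$-group $\mathbf{G}$ let $S_\mathbf{G} := \{\bar g \in G^n : A\bar g \geq 0\}$ be the solution set and $C := \{\bar x \in \mathbb{R}^n : A\bar x \geq 0\}$ the associated real cone. The first thing I would record is that each sentence $\forall \bar x\,(\alpha(\bar x) \bimp \sum_i c_i x_i = 0)$ is a quasi-identity, so by Lemma \ref{LEMA: props de l-grupos} (no proper subquasivarieties) its validity is the same in every nontrivial $\ell$-group. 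Hence full-dimensionality may be tested on any single nontrivial $\mathbf{G}$; in particular $\alpha$ is full-dimensional if and only if $C$ is a full-dimensional real cone, equivalently the integer points $C \cap \mathbb{Z}^n$ lie in no rational hyperplane through the origin.

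For the easy implication I argue the contrapositive. If $\alpha$ is not full-dimensional, fix a nonzero $\bar c \in \mathbb{Z}^n$ with $\mathbf{G} \vDash \forall \bar x\,(\alpha(\bar x) \bimp \sum_i c_i x_i = 0)$ for every nontrivial $\mathbf{G}$. The map $\psi\colon G^n \to G$, $\bar y \mapsto \sum_i c_i y_i$, is a group homomorphism; since $\mathbf{G}$ is torsion-free its image is the nontrivial subgroup $\gcd(\bar c)\,G$, so $\ker\psi \subsetneq G^n$. As $S_\mathbf{G} \subseteq \ker\psi$, the subgroup generated by $S_\mathbf{G}$ is contained in $\ker\psi$, and hence $S_\mathbf{G}$ fails to generate $\mathbf{G}^n$ for each nontrivial $\mathbf{G}$.

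For the main implication, assume $\alpha$ is full-dimensional. The first step, which I expect to be the crux, is a purely lattice-theoretic fact: the integer points of a full-dimensional rational cone generate $\mathbb{Z}^n$ as a group. I would prove this by observing that $C \cap \mathbb{Z}^n$ contains $n$ linearly independent vectors (scale interior rational points to clear denominators), so the subgroup $G'$ it generates has finite index in $\mathbb{Z}^n$; then, since $C$ has nonempty interior, for large $t$ the cone contains a Euclidean ball larger than a fundamental domain of $G'$, which therefore meets every coset of $G'$, forcing $G' = \mathbb{Z}^n$. The second step transfers this to an arbitrary totally ordered $\mathbf{G}$ via the scaling identity $A(gv) = g\,(Av)$: for $v \in C \cap \mathbb{Z}^n$ and $g \geq 0$ this shows $gv \in S_\mathbf{G}$, and writing a general $g = g^+ - g^-$ as a difference of its positive parts gives $gv \in S_\mathbf{G} - S_\mathbf{G}$ for every $g \in G$. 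Finally, choosing $v_k \in C \cap \mathbb{Z}^n$ with $e_i = \sum_k c_{ik} v_k$ (possible by the lattice fact) yields $g e_i = \sum_k c_{ik}(g v_k)$ inside the subgroup generated by $S_\mathbf{G}$, and summing over coordinates places every $\bar g \in G^n$ in that subgroup.

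Thus the whole argument rests on the lattice-generation lemma together with the identity $A(gv) = g(Av)$; once these are available, the reduction from $\mathbf{G}$ to $\mathbb{Z}$ through positive parts and the integer linear combinations is routine. The trivial $\ell$-group should be checked separately, but there both sides of the equivalence hold vacuously, so it causes no failure of the universally quantified right-hand condition.
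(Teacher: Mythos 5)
Your proof is correct, and its overall architecture coincides with the paper's: both arguments first establish that the integer solutions generate $\mathbf{Z}^n$ and then transfer to an arbitrary totally ordered $\mathbf{G}$ by scaling integer solutions $\bar a$ by non-negative elements $g$ (using $\ell_j(\bar a g)=\ell_j(\bar a)g\geq 0$) and taking integer linear combinations to reach each $\bar e_j g$. Where you diverge is in the proof of the central lattice fact. The paper argues algebraically: full-dimensionality forces $S_{\mathbf{Q}}$ to span $\mathbf{Q}^n$, a basis is scaled into $S_{\mathbf{Z}}$, and an arbitrary $\bar c\in Z^n$ is exhibited as $(k\bar b+\bar c)-k\bar b$ with $\bar b$ the sum of the basis vectors and $k$ large, so that $k\bar b+\bar c$ is a non-negative rational combination of basis vectors and hence lies in $S_{\mathbf{Z}}$. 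You instead run a geometry-of-numbers argument: the subgroup $G'$ generated by $C\cap\mathbb{Z}^n$ has finite index, and a sufficiently large ball inside the cone meets every coset of $G'$, forcing $G'=\mathbb{Z}^n$. Both are sound; the paper's version is more self-contained (no appeal to covering radii or fundamental domains and no topology beyond what is already implicit in working over $\mathbf{Q}$), while yours is perhaps more conceptual and makes the role of full-dimensionality of the real cone transparent. You also spell out the converse implication (the kernel of $\bar y\mapsto\sum c_iy_i$ is a proper subgroup containing $S_{\mathbf{G}}$ by torsion-freeness), which the paper dismisses as straightforward, and you correctly flag that the trivial group is harmless on the right-hand side.
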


\begin{proof}
Assume $\alpha(\bar{x})$ is a full-dimensional system of inequalities and let $S_\mathbf{G} := \{\bar{g} \in G^n: \mathbf{G} \vDash \alpha(\bar{g})\}$ for any totally ordered $\ell$-group $\mathbf{G}$. Let $\mathbf{Q}$ and $\mathbf{Z}$ denote the $\ell$-groups of rational and integer numbers, respectively. First observe that $S_\mathbf{Z} = S_\mathbf{Q} \cap Z^n$. Note also that $S_\mathbf{G}$ is closed under linear combinations whose coefficients are non-negative integers, and $S_\mathbf{Q}$ is closed under non-negative rational linear combinations.

We start by proving that $S_\mathbf{Z}$ generated $\mathbf{Z}^n$ as Abelian group. Let $V$ be the $\mathbf{Q}$-vector subspace of $\mathbf{Q}^n$ generated by $S_\mathbf{Q}$. Observe that $V = Q^n$; otherwise, there would exist integers $a_1, \ldots, a_n$, not all zero, such that $V \subseteq \{\bar{x} \in Q^n: \sum a_ix_i = 0\}$, contradicting the fact that $\alpha(\bar{x})$ is full-dimensional. Since $V = Q^n$, the solution set $S_\mathbf{Q}$ contains a $\mathbf{Q}$-basis of $\mathbf{Q}^n$, which, multiplied by a suitable positive integer, yields a $\mathbf{Q}$-basis $\{\bar{b}_1,\dots,\bar{b}_n\} \subseteq S_\mathbf{Z}$.  Since $S_\mathbf{Z}$ is closed under positive integer linear combinations, $\bar{b} := \sum \bar{b}_i \in S_\mathbf{Z}$. Now, let $\bar{c} \in Z^n$ be arbitrary and write $\bar{c}  = \sum r_i\bar{b}_i$ for suitable rational numbers $r_i$. Let $k$ be a positive integer such that $k \geq -r_i$ for all $i$. Then $k \bar{b} + \bar{c} = \sum_i (k+r_i)\bar{b}_i \in S_\mathbf{Q}$, since it is a positive linear combination of elements in $S_\mathbf{Q}$. Thus $k\bar{b} + \bar{c} \in S_\mathbf{Q} \cap Z^n = S_\mathbf{Z}$, and so $\bar{c} = (k\bar{b}+\bar{c}) - k\bar{b}$ belongs to the Abelian group generated by $S_\mathbf{Z}$.

We prove now that $S_\mathbf{G}$ generates $\mathbf{G}^n$ as an Abelian group for any totally ordered group $\mathbf{G}$. For any $\bar{a} \in Z^n$ and $g \in G$ we write $\bar{a}g := (a_1g, \dots, a_ng)$. Note that if $\bar{a} \in S_\mathbf{Z}$ and $g$ is a non-negative member of $G$, then $\bar{a}g \in S_\mathbf{G}$. Fix $j \in \{1,\ldots,n\}$, and let $\bar{e}_j \in Z^n$ be such that $e_{ji} = 1$ if $i = j$ and $e_{ji} = 0$ otherwise. We write $\bar{e}_j = \sum k_l\bar{a}_l$ for integers $k_l$ and $\bar{a}_l \in S_\mathbf{Z}$. Hence, if $g \in G$, $g \geq 0$, then $\bar{e}_jg = \sum k_l \bar{a}_lg$ is an integer linear combination of solutions $\bar{a}_lg \in S_\mathbf{G}$. This proves that $S_\mathbf{G}$ generates $\bar{e}_jg$ for all $j$ and all $g \in G$, $g \geq 0$. Now it follows easily that any $\bar{g} \in G^n$ is generated by elements in $S_\mathbf{G}$.

The converse implication is straightforward.
\end{proof}

\begin{lemma} \label{LEMA: forma canonica de terminos}
Let $t(\bar{x})$ be a term in $\tau_\mathcal{G}$. There are full-dimensional systems of linear inequalities $\alpha_1(\bar{x}), \dots, \alpha_m(\bar{x})$ and terms $t_1(\bar{x}), \dots, t_m(\bar{x})$, which are integer linear combinations of the variables $x_1, \ldots, x_n$, such that for all $\mathbf{G} \in \mathcal{G}_\mathrm{to}$ and all $\bar{g} \in G^n$ we have
\begin{equation}
t^\mathbf{G}(\bar{g}) = \begin{cases} t_1^\mathbf{G}(\bar{g}) & \text{if } \alpha_1(\bar{g}), \\ \;\; \vdots \\ t_m^\mathbf{G}(\bar{g}) & \text{if }\alpha_m(\bar{g}). \end{cases}
\label{EQ: def por casos en G}
\end{equation}
\end{lemma}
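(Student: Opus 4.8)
The plan is to prove the statement by induction on the structure of $t$, first producing a cover of $G^n$ by polyhedral pieces on which $t$ is linear---without yet insisting on full-dimensionality---and then discarding the redundant lower-dimensional pieces. Since every $\mathbf{G} \in \mathcal{G}_\mathrm{to}$ is totally ordered, the connectives $\vee$ and $\wedge$ compute the maximum and the minimum, and this reading is uniform across $\mathcal{G}_\mathrm{to}$, so the inductive construction will yield the same systems and the same linear forms for all $\mathbf{G}$ simultaneously.

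For the induction I would maintain the invariant that there are systems of linear inequalities $\beta_1(\bar{x}), \dots, \beta_N(\bar{x})$ and integer linear combinations $\ell_1(\bar{x}), \dots, \ell_N(\bar{x})$ of the variables such that, in every $\mathbf{G} \in \mathcal{G}_\mathrm{to}$, each $\bar{g} \in G^n$ satisfies some $\beta_p$, and $\beta_p(\bar{g})$ implies $t^\mathbf{G}(\bar{g}) = \ell_p^\mathbf{G}(\bar{g})$. The base cases $t = x_i$ and $t = 0$ are handled by the empty system (whose solution set is all of $G^n$). For $t = -t'$ one negates the linear forms; for $t = t' + t''$ one passes to the common refinement, replacing the pairs $(\beta_i',\ell_i')$ and $(\beta_j'',\ell_j'')$ by $(\beta_i' \band \beta_j'',\, \ell_i' + \ell_j'')$, which again cover $G^n$ because each of the two families does. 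For $t = t' \vee t''$ (dually for $\wedge$) one refines each $\beta_i' \band \beta_j''$ further by the sign of $\ell_i' - \ell_j''$, adding the inequality $\ell_i' - \ell_j'' \geq 0$ with linear form $\ell_i'$, or $\ell_j'' - \ell_i' \geq 0$ with linear form $\ell_j''$; totality of the order guarantees that the two refined pieces still cover. Each step keeps the pieces honest conjunctions of integer linear inequalities and the selected forms integer linear combinations of the variables.

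It remains to remove the pieces $\beta_p$ that are not full-dimensional while preserving coverage; this is the main obstacle, since conjoining systems can easily destroy full-dimensionality. Here I would pass to the $\ell$-group $\mathbf{R}$ of the reals and consider the arrangement of the hyperplanes $\{L = 0\}$, one for each integer linear form $L$ occurring in some $\beta_p$. On each chamber (connected component of the complement of the arrangement) every such $L$ has constant nonzero sign; hence if a chamber $C$ meets some $\beta_p$ at a point, then all defining inequalities of $\beta_p$ hold strictly throughout $C$, so $C \subseteq \beta_p$ and $\beta_p$ is full-dimensional. As the $\beta_p$ cover $\mathbb{R}^n$, every chamber lies inside a full-dimensional $\beta_p$, and the union of the full-dimensional pieces is then a closed set containing the dense complement of the arrangement, hence equal to $\mathbb{R}^n$. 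Thus keeping only the full-dimensional $\beta_p$ retains coverage over $\mathbf{R}$, while $t^\mathbf{R} = \ell_p^\mathbf{R}$ still holds on each. Finally I would transfer back: the coverage sentence $\forall \bar{x}\,(\beta_1(\bar{x}) \bor \cdots \bor \beta_N(\bar{x}))$ and each implication $\forall \bar{x}\,(\beta_p(\bar{x}) \bimp t(\bar{x}) = \ell_p(\bar{x}))$ are universal, so they descend from $\mathbf{R}$ to its subalgebra $\mathbf{Z}$ and then, by Lemma \ref{LEMA: props de l-grupos}.(\ref{ITEM: minimal como univ}) which gives $\mathcal{G}_\mathrm{to} = \mathsf{ISP_U}(\mathbf{Z})$, rise to all of $\mathcal{G}_\mathrm{to}$; and full-dimensionality over $\mathbf{R}$ coincides with full-dimensionality on every nontrivial $\ell$-group by the remark following that definition. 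Taking the surviving $\beta_p$ and $\ell_p$ as the required $\alpha_i$ and $t_i$ then completes the argument.
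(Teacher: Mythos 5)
Your proof is correct and takes essentially the same route as the paper's: decompose $t^\mathbf{G}$ into integer linear forms on covering polyhedral pieces, prune to the full-dimensional pieces over $\mathbf{R}$ by a closedness/density argument, and transfer back to all of $\mathcal{G}_\mathrm{to}$ via universality of the relevant formulas and Lemma \ref{LEMA: props de l-grupos}.(\ref{ITEM: minimal como univ}). The only differences are presentational: you build the case decomposition by structural induction where the paper first passes to the normal form $t=s(u_1,\dots,u_p)$ with $s$ a lattice term and indexes pieces by permutations of the $u_i$, and your hyperplane-arrangement chambers make explicit what the paper calls a ``simple topological argument.''
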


\begin{proof}
Fix a $\tau_\mathcal{G}$-term $t(\bar{x})$. We show first that there are full-dimensional systems $\alpha_1(\bar{x}), \dots, \alpha_m(\bar{x})$ and Abelian group terms $t_1(\bar{x}), \dots, t_m(\bar{x})$ such that \eqref{EQ: def por casos en G} holds for $\mathbf{G} = \mathbf{R}$, the $\ell$-group of real numbers.

Using the way the lattice and group operations interact, we may assume $t(\bar{x}) = s(u_1(\bar{x}),\dots,u_p(\bar{x}))$ where $u_1(\bar{x}),\dots,u_p(\bar{x})$ are Abelian group terms (i.e., linear combinations of variables with integer coefficients) and $s(\bar{y})$ is a lattice term. For each permutation $\sigma$ of $\{1,\ldots,p\}$ let $\alpha_\sigma(\bar{x})$ be the system of linear inequalities expressing that $u_{\sigma(1)}(\bar{x}) \leq \dots \leq u_{\sigma(p)}(\bar{x})$. Since $\mathbf{R}$ is totally ordered, for each $\sigma$ there is $j_\sigma \in \{1,\ldots,p\}$ such that
$$t^\mathbf{R}(\bar{r}) = u_{j_\sigma}^\mathbf{R}(\bar{r}) \text{ for all } \bar{r} \text{ such that } \alpha_\sigma(\bar{r}).$$

Next, for each $\sigma$ let $S_\sigma := \{\bar{r} \in R^n: \alpha_\sigma(\bar{r})\}$. As each $\bar{r} \in R^n$ satisfies at least one $\alpha_\sigma(\bar{x})$, we have that $R^n = \bigcup_\sigma S_\sigma$. Let $\{\sigma_1,\dots,\sigma_m\}$ be the set of permutations $\sigma$ such that $S_\sigma$ has nonempty interior. Note that $\alpha_{\sigma_j}(\bar{x})$ is full-dimensional on $\mathbf{R}$ for all $j \in \{1,\ldots,m\}$ (and thus on every $\ell$-groups).
Since each $S_\sigma$ is a closed subset of $R^n$, by a simple topological argument, we have
$$R^n = S_{\sigma_1} \cup \dots \cup S_{\sigma_m}.$$
So, defining $\alpha_j(\bar{x}) := \alpha_{\sigma_j}(\bar{x})$ and $t_j(\bar{x}) := u_{\sigma_j}(\bar{x})$ for $j \in \{1,\dots,m\}$, we have established \eqref{EQ: def por casos en G} in the case $\mathbf{G} = \mathbf{R}$. To conclude we show that the same $\alpha_j$'s and $t_j$'s work for any $\mathbf{G} \in \mathcal{G}_\mathrm{to}$. In fact, note that \eqref{EQ: def por casos en G} holds if and only if $\mathbf{G}$ satisfies the following universal formulas
\begin{itemize} \setlength{\itemsep}{0pt}\setlength{\parskip}{0pt}
\item $\forall \bar{x} \, (\alpha_j(\bar{x}) \bimp t(\bar{x}) = t_j(\bar{x}))$ for $j \in \{1,\dots,m\}$,
\item $\forall \bar{x} \, (\alpha_1(\bar{x}) \bor \dots \bor \alpha_m(\bar{x}))$.
\end{itemize}
Since these formulas hold in $\mathbf{R}$, Lemma \ref{LEMA: props de l-grupos}.(\ref{ITEM: minimal como univ}) says that they must hold in $\mathbf{G}$.
\end{proof}

\begin{lemma} \label{LEMA: reduccion a phi k}
Given a positive integer $k$ and an $\tau_\mathcal{G}$-term $t$, there is a positive integer $k'$ such that $\delta_{k,t} \eq \delta_{k'}$ in $\mathcal{G}$.
\end{lemma}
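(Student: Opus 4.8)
The plan is to reduce everything to totally ordered $\ell$-groups and then to a divisibility computation. By Lemma \ref{LEMA: equiv en Gto -> equiv en G} it suffices to prove $\delta_{k,t} \eq \delta_{k'}$ in $\mathcal{G}_\mathrm{to}$, since both sentences are EFD-sentences. As Abelian $\ell$-groups are torsion-free, the uniqueness clauses $U(\delta_{k,t})$ and $U(\delta_{k'})$ hold throughout $\mathcal{G}$, so for a totally ordered $\mathbf{G}$ the sentence $\delta_{k,t}$ simply asserts that $k$ divides $t^\mathbf{G}(\bar g)$ in $\mathbf{G}$ for every $\bar g \in G^n$, while $\delta_{k'}$ asserts that every element of $\mathbf{G}$ is divisible by $k'$, i.e.\ $k'\mathbf{G} = \mathbf{G}$. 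I would next put $t$ into the piecewise-linear normal form of Lemma \ref{LEMA: forma canonica de terminos}: full-dimensional systems $\alpha_1,\dots,\alpha_m$ covering $G^n$ and integer-linear terms $t_1,\dots,t_m$ with $t^\mathbf{G}(\bar g) = t_j^\mathbf{G}(\bar g)$ whenever $\alpha_j(\bar g)$. Consequently $\mathbf{G} \vDash \delta_{k,t}$ if and only if for each $j$ and each $\bar g$ with $\mathbf{G} \vDash \alpha_j(\bar g)$ we have $k \mid t_j^\mathbf{G}(\bar g)$.

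The key step is to remove the side conditions $\alpha_j$. Fix $j$ and let $\phi_j\colon \mathbf{G}^n \to \mathbf{G}$ be the group homomorphism $\bar g \mapsto t_j^\mathbf{G}(\bar g)$. Since $k\mathbf{G}$ is a subgroup of $\mathbf{G}$, the preimage $\phi_j^{-1}(k\mathbf{G}) = \{\bar g : k \mid t_j^\mathbf{G}(\bar g)\}$ is a subgroup of $\mathbf{G}^n$. By Lemma \ref{LEMA: independiente <-> genera} the solution set of $\alpha_j$ generates $\mathbf{G}^n$; hence if this solution set is contained in $\phi_j^{-1}(k\mathbf{G})$, then $\phi_j^{-1}(k\mathbf{G}) = \mathbf{G}^n$. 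Thus the condition ``$k \mid t_j^\mathbf{G}(\bar g)$ for all $\bar g$ satisfying $\alpha_j$'' is equivalent to ``$k \mid t_j^\mathbf{G}(\bar g)$ for all $\bar g \in G^n$'', i.e.\ to $\operatorname{im}\phi_j \subseteq k\mathbf{G}$. Writing $d_j$ for the gcd of the integer coefficients of $t_j$ (with $d_j := 0$ if $t_j$ is identically $0$), a B\'ezout argument gives $\operatorname{im}\phi_j = d_j\mathbf{G}$, so the condition becomes $d_j\mathbf{G} \subseteq k\mathbf{G}$, that is, $k \mid d_j g$ for every $g \in G$.

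Finally I would translate each condition into a single $\delta$ and combine. A direct computation with $\gcd$ shows that, in a torsion-free group, $k \mid d_j g$ for all $g$ is equivalent to $k_j' \mid g$ for all $g$, where $k_j' := k/\gcd(k,d_j)$ (and $k_j' = 1$ when $d_j = 0$); that is, this conjunct is equivalent to $\delta_{k_j'}$. Hence $\delta_{k,t} \eq \bigand_{j=1}^m \delta_{k_j'}$ in $\mathcal{G}_\mathrm{to}$. Using torsion-freeness once more, $\delta_a \band \delta_b$ is equivalent to $\delta_{\operatorname{lcm}(a,b)}$: indeed $a\mathbf{G}=\mathbf{G}$ and $b\mathbf{G}=\mathbf{G}$ force $ab\mathbf{G}=\mathbf{G}$, and since $\operatorname{lcm}(a,b) \mid ab$ this yields $\operatorname{lcm}(a,b)\mathbf{G}=\mathbf{G}$, while the converse is immediate. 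So by induction $\bigand_j \delta_{k_j'} \eq \delta_{k'}$ with $k' := \operatorname{lcm}(k_1',\dots,k_m')$. This proves $\delta_{k,t} \eq \delta_{k'}$ in $\mathcal{G}_\mathrm{to}$, and hence in $\mathcal{G}$ by Lemma \ref{LEMA: equiv en Gto -> equiv en G}.

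The main obstacle is the key step of discarding the side conditions $\alpha_j$: this is exactly where full-dimensionality is indispensable, via the subgroup-and-generation argument built on Lemma \ref{LEMA: independiente <-> genera}. Once that is in place the remaining number-theoretic bookkeeping with $\gcd$ and $\operatorname{lcm}$ is routine.
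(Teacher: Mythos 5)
Your proposal is correct and follows essentially the same route as the paper: reduce to $\mathcal{G}_\mathrm{to}$ via Lemma \ref{LEMA: equiv en Gto -> equiv en G}, apply the piecewise-linear normal form of Lemma \ref{LEMA: forma canonica de terminos}, use full-dimensionality (Lemma \ref{LEMA: independiente <-> genera}) to discard the side conditions $\alpha_j$, and finish with a B\'ezout/gcd computation. The only differences are cosmetic --- you phrase the key step via subgroups and preimages rather than explicit integer combinations, and you compute a per-piece $k_j'$ and take an lcm where the paper uses one global gcd $d$ of $k$ and all the coefficients, but these yield the same $k'$.
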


\begin{proof}
Fix a positive integer $k$ and an $\tau_\mathcal{G}$-term $t$; let $\alpha_j(\bar{x})$ and $t_j(\bar{x})$ for $j \in \{1,\ldots,m\}$ be as in Lemma \ref{LEMA: forma canonica de terminos}. Suppose $t_j(\bar{x}) = a_{j1}x_1 + \dots + a_{jn}x_n$, and let $d$ be the greatest common divisor of the set $\{k\} \cup \{a_{ji}: i \in \{1,\dots,n\}, j \in \{1,\ldots,m\}\}$. Define $k'$ by $k = dk'$; we prove that $\delta_{k,t} \eq \delta_{k'}$ in $\mathcal{G}$. Observe that, due to Lemma \ref{LEMA: equiv en Gto -> equiv en G}, it suffices to show that $\delta_{k,t} \eq \delta_{k'}$ in $\mathcal{G}_\mathrm{to}$.

Take $\mathbf{G} \in \mathcal{G}_\mathrm{to}$ and assume $\mathbf{G} \vDash \delta_{k,t}$. We claim that $t_j(\bar{g})$ is divisible by $k$ for every $\bar{g} \in G^n$ and $j \in \{1,\dots,m\}$. Indeed, given $\bar{g} \in G^n$ and $j \in \{1,\dots,m\}$, by Lemma \ref{LEMA: independiente <-> genera}, we can write $\bar{g} = \sum b_l \bar{g}_l$ for some integers $b_l$ and some $\bar{g}_l \in G^n$ such that $\mathbf{G} \vDash \alpha_j(\bar{g}_l)$. Note that $t(\bar{g}_l) = t_j(\bar{g}_l)$ for each $l$. Since $\mathbf{G} \vDash \delta_{k,t}$, for each $l$ there is $h_l \in G$ such that $k h_l = t(\bar{g}_l) = t_j(\bar{g}_l)$. Thus
\begin{align*}
t_j(\bar{g}) & = t_j(\sum b_l \bar{g}_l) \\
& = \sum b_l t_j(\bar{g}_l) \\
& = \sum b_l k h_l \\ 
& = k \sum b_lh_l,
\end{align*}
which proves the claim.

Now write $d = kc + \sum_{i,j} a_{ji}c_{ji}$ for suitable integers $c$ and $c_{ji}$. Then, for any $g \in G$,
\begin{align*}
dg & = kcg + \sum_j \sum_i a_{ji}c_{ji}g \\
& = kcg + \sum_j t_j(\bar{g}_j)
\end{align*}
where $\bar{g}_j := (c_{j1}g,\dots,c_{jn}g)$. Since each $t_j(\bar{g}_j)$ is divisible by $k$, it follows that there is $g' \in G$ such that $dg = kg'$. Thus $dg = dk'g'$, so $d(g-k'g') = 0$ and, since $\mathbf{G}$ is torsion-free, $g = k'g'$. This proves that $\mathbf{G} \vDash \delta_{k'}$.

Conversely, assume any element in $\mathbf{G}$ is divisible by $k'$, and fix $\bar{g} := (g_1,\dots,g_n) \in G^n$. We prove that $t(\bar{g})$ is divisible by $k$. Let $j \in \{1,\dots,m\}$ be such that $t(\bar{g}) = t_j(\bar{g})$. Since each $a_{ji}$ is divisible by $d$, there is $g' \in G$ such that $t_j(\bar{g}) = d g'$. Now, since $g'$ is divisible by $k'$, there is $g'' \in G$ such that $g' = k'g''$. Putting all together we obtain $t(\bar{g}) = t_j(\bar{g}) = dg' = dk'g'' = kg''$. 
\end{proof}

We are now ready to present our characterization of EFD-sentences in $\mathcal{G}$. 

\begin{theorem} \label{TEO: EFD in l-groups}
Given an EFD-sentence $\varphi$ with a nontrivial model in $\mathcal{G}$ there is a positive integer $k$ such that $\varphi \eq \delta_k$ in $\mathcal{G}$.
\end{theorem}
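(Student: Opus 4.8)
The plan is to string together the reduction lemmas already established and then collapse a finite conjunction of divisibility sentences into a single one. Concretely, let $\varphi$ be an EFD-sentence with a nontrivial model in $\mathcal{G}$. First I would invoke Lemma \ref{LEMA: reduccion a phi k t} to obtain positive integers $k_1,\dots,k_m$ and $\tau_\mathcal{G}$-terms $t_1,\dots,t_m$ with $\varphi \eq \bigand_{j=1}^m \delta_{k_j,t_j}$ in $\mathcal{G}$ (here $m \geq 1$, since $\varphi$ is an EFD-sentence). Next, applying Lemma \ref{LEMA: reduccion a phi k} to each conjunct replaces every $\delta_{k_j,t_j}$ by some $\delta_{k_j'}$, yielding $\varphi \eq \bigand_{j=1}^m \delta_{k_j'}$ in $\mathcal{G}$.

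It then remains to show that a finite conjunction of sentences of the form $\delta_\ell$ is equivalent in $\mathcal{G}$ to a single one. The key observation is that $\delta_a \band \delta_b \eq \delta_{ab}$ in $\mathcal{G}$ for all positive integers $a,b$. For the left-to-right implication, given $x$ I use $\delta_a$ to find $z$ with $az = x$ and then $\delta_b$ to find $w$ with $bw = z$, so that $(ab)w = x$; conversely, from $\delta_{ab}$ and a given $x$ I pick $z$ with $(ab)z = x$, whence $bz$ witnesses divisibility of $x$ by $a$ (and, symmetrically, $az$ witnesses divisibility by $b$). In both directions the uniqueness of the witness is immediate because Abelian $\ell$-groups are torsion-free. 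Iterating this equivalence gives $\bigand_{j=1}^m \delta_{k_j'} \eq \delta_k$ in $\mathcal{G}$ with $k := k_1' \cdots k_m'$, and since $k \geq 1$ this finishes the argument.

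I expect essentially no obstacle in this final step: all of the substantive work has already been carried out in the preceding results, namely that an EFD-defined function on a totally ordered divisible group is a term in the divisions (Theorem \ref{TEO: alg en divisibles son terminos con divisiones}) and the two-stage reduction of an arbitrary EFD-sentence first to the $\delta_{k,t}$ form and then to the $\delta_{k'}$ form (Lemmas \ref{LEMA: reduccion a phi k t} and \ref{LEMA: reduccion a phi k}). The only genuinely new content here is the elementary divisibility computation $\delta_a \band \delta_b \eq \delta_{ab}$, whose verification rests solely on torsion-freeness. If one wanted a sharper statement, one could remark that $\delta_\ell$ depends only on the set of primes dividing $\ell$, so $k$ may be taken square-free; but this refinement is not needed for the theorem as stated.
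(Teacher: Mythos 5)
Your proposal is correct and follows essentially the same route as the paper: both combine Lemma \ref{LEMA: reduccion a phi k t} and Lemma \ref{LEMA: reduccion a phi k} to reduce $\varphi$ to a finite conjunction $\bigand_j \delta_{k_j}$ and then take $k$ to be the product of the $k_j$'s. The paper leaves the final equivalence $\bigand_j \delta_{k_j} \eq \delta_k$ as an unstated observation, whereas you spell out the torsion-freeness argument; that is the only difference.
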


\begin{proof}
Given $\varphi$, combining Lemmas \ref{LEMA: reduccion a phi k t} and \ref{LEMA: reduccion a phi k}, we have that there are positive integers $k_1,\dots,k_m$ such that $\varphi \eq \displaystyle \bigand_{j=1}^m \delta_{k_j}$ in $\mathcal{G}$. Now take $k := k_1 \cdots k_m$, and note that $\displaystyle \bigand_{j=1}^m \delta_{k_j}$ is equivalent to $\delta_k$  in $\mathcal{G}$.   
\end{proof}

Given a set $S$ of prime numbers, let $\Sigma_S := \{\delta_p: p \in S\}$. Since for an $\ell$-group divisibility by $k$ is equivalent to divisibility by the prime factors of $k$, we have the following:

\begin{theorem} \label{TEO: clases AE de l-grupos}
Every set of EFD-sentences either has only trivial models or is equivalent over $\mathcal{G}$ to $\Sigma_S$ for some set $S$ of prime numbers. Furthermore, the map $S \mapsto \Sigma_S$ is one-to-one, and thus, the lattice of AE-subclasses of $\mathcal{G}$ is isomorphic with $\mathbf{1} \oplus \mathbf{2}^\omega$.
\end{theorem}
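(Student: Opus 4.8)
The plan is to use the preceding machinery (especially Theorem \ref{TEO: EFD in l-groups} and the elementary number theory of divisibility) to reduce any set of EFD-sentences to a normal form $\Sigma_S$. First I would handle the dichotomy: given a set $\Sigma$ of EFD-sentences, either every member has only trivial models, or at least one member $\varphi \in \Sigma$ has a nontrivial model in $\mathcal{G}$. In the former case there is nothing to prove. In the latter case, I would apply Theorem \ref{TEO: EFD in l-groups} to each $\varphi \in \Sigma$ that has a nontrivial model, obtaining a positive integer $k_\varphi$ with $\varphi \eq \delta_{k_\varphi}$ in $\mathcal{G}$; any $\varphi \in \Sigma$ with only trivial models is equivalent in $\mathcal{G}$ to $\delta_k$ for a suitable large $k$ (since such a $\varphi$ forces the algebra to be trivial, which over $\mathcal{G}$ is captured by taking $k$ to be the product of enough primes, or more simply one observes that a set with a nontrivial model reduces away such clauses). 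I then note the key arithmetic fact, stated just before the theorem: for an $\ell$-group, divisibility by $k$ is equivalent to divisibility by each prime factor of $k$, so $\delta_k \eq \bigand_{p \mid k} \delta_p$ in $\mathcal{G}$. Hence the whole set $\Sigma$ is equivalent over $\mathcal{G}$ to $\Sigma_S$ where $S$ is the union of the prime factors of all the $k_\varphi$'s.

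Next I would establish injectivity of $S \mapsto \Sigma_S$, i.e. that distinct prime sets give genuinely different (non-equivalent) classes. Suppose $S \neq S'$, say $p \in S \setminus S'$. I would exhibit a single $\ell$-group separating them: the localization $\mathbf{Z}_{(p)}$ (integers divisible by every prime except $p$ inverted), or more transparently the subgroup of $\mathbf{Q}$ generated by $\{1/q : q \text{ prime}, q \neq p\}$, which is divisible by every prime $q \neq p$ but not by $p$. Such a group satisfies $\delta_q$ for all $q \neq p$ but fails $\delta_p$, hence belongs to $\Mod(\Sigma_{S'})$ but not to $\Mod(\Sigma_S)$. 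This shows $\mathcal{G} \cap \Mod(\Sigma_S) \neq \mathcal{G} \cap \Mod(\Sigma_{S'})$, so the map is one-to-one; monotonicity (larger $S$ means more divisibility conditions, hence a smaller class) is immediate.

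Finally I would identify the lattice structure. The AE-subclasses of $\mathcal{G}$ are, by the reduction above, exactly $\mathcal{G} \cap \Mod(\Sigma_S)$ for $S$ ranging over all sets of primes, together with the class of trivial algebras. Ordering by inclusion, $\mathcal{G} \cap \Mod(\Sigma_S) \subseteq \mathcal{G} \cap \Mod(\Sigma_{S'})$ iff $S' \subseteq S$, so the poset of these nontrivial classes is anti-isomorphic to the powerset lattice of the primes, namely $\mathbf{2}^\omega$; since the class of trivial algebras is contained in every such class (it satisfies every $\delta_p$) it sits at the bottom, contributing the $\mathbf{1}$ summand below, giving $\mathbf{1} \oplus \mathbf{2}^\omega$ as claimed.

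The main obstacle I anticipate is the careful treatment of the degenerate clauses: an EFD-sentence with only trivial models must be shown to be harmlessly absorbable, and I must be certain that the class of trivial algebras really is an AE-subclass distinct from all the $\mathcal{G} \cap \Mod(\Sigma_S)$ with $S$ the full set of primes (the divisible $\ell$-groups), so that the extra $\mathbf{1}$ is genuinely present and not already among the $\Sigma_S$ classes; concretely, every totally ordered divisible $\ell$-group satisfies all $\delta_p$ by Corollary \ref{COR: los divisibles t.o. satisfacen todo lo posible}, so $\mathcal{G} \cap \Mod(\Sigma_{\text{all primes}})$ is the class of divisible $\ell$-groups, which is strictly larger than the trivial class, confirming the bottom element is separate. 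Verifying this separation and the exact ordinal-sum shape is where I would spend the most care.
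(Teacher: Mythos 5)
Your proposal follows the same route as the paper, which in fact gives no detailed proof of this theorem: it is presented as an immediate consequence of Theorem \ref{TEO: EFD in l-groups} together with the remark that divisibility by $k$ is equivalent to divisibility by the prime factors of $k$. Your reduction of an arbitrary $\Sigma$ to $\Sigma_S$, your separation of distinct prime sets, and your identification of the ordinal sum $\mathbf{1} \oplus \mathbf{2}^\omega$ are all correct in substance and supply exactly the details the paper leaves implicit.

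Two local claims in your write-up are false as stated, though both are harmless because you also give the correct alternative. First, an EFD-sentence $\varphi$ with only trivial models is \emph{not} equivalent in $\mathcal{G}$ to $\delta_k$ for any $k$, however large: every $\delta_k$ has the nontrivial model $\mathbf{Q}$, so no product of primes will do. The correct treatment is the one you mention in passing: if $\Sigma$ has a nontrivial model $\mathbf{A}$, then every member of $\Sigma$ has $\mathbf{A}$ as a nontrivial model, so Theorem \ref{TEO: EFD in l-groups} applies to each member and no degenerate clauses remain; if instead $\Sigma$ has only trivial models, the first alternative of the theorem holds and there is nothing to prove. Second, your ``more transparent'' separating witness, the subgroup of $\mathbf{Q}$ generated by $\{1/q : q \text{ prime},\ q \neq p\}$, is not divisible by any $q$: it contains $1/q$ but not $1/q^2$, since its elements have squarefree denominators. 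Your first witness, the localization $\mathbf{Z}_{(p)}$ (all rationals with denominator coprime to $p$), does satisfy $\delta_q$ for every prime $q \neq p$ and fails $\delta_p$, and is the one to keep. With these two repairs the argument is complete.
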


\subsection{The algebraic expansions of the Logic of Equilibrium}

As shown in \cite{GLS04} the variety $\mathcal{G}$ of Abelian $\ell$-groups is the equivalent algebraic semantics of the Logic of Equilibrium $Bal$ defined by the following:
\begin{multicols}{2}
Axioms

$(\varphi \imp \psi) \imp ((\theta \imp \varphi) \imp (\theta \imp \psi))$

$(\varphi \imp (\psi \imp \theta)) \imp (\psi \imp (\varphi \imp \theta))$

$((\varphi \imp \psi) \imp \psi) \imp \varphi$

$((\psi \imp \varphi)^+ \imp (\varphi \imp \psi)^+) \imp (\varphi \imp \psi)$

$\varphi^{++} \imp \varphi^+$

\columnbreak

Rules

$\varphi, \varphi \imp \psi \vdash \psi$

$\varphi, \psi \vdash \varphi \imp \psi$

$\varphi \vdash \varphi^+$

$(\varphi \imp \psi)^+ \vdash (\varphi^+ \imp \psi^+)^+$
\end{multicols}
\noindent The derived connectives:
\begin{itemize}
\item[] $0 := x \imp x$,
\item[] $-x := x \imp 0$,
\item[] $x + y := -x \imp y$,
\item[] $x \vee y := (x \imp y)^+ + x$,
\item[] $x \wedge y := -(-x \vee -y)$.
\end{itemize}
form a complete set for $Bal$ since $x \imp y \dashv\vdash -x + y$ and $x^+ \dashv\vdash x \vee 0$. This allows us to say that $\mathcal{G}$ is the equivalent algebraic semantics of $Bal$ via equivalence formulas $\Delta(x,y) = \{x \imp y\}$ and defining equations $\varepsilon(x) = \{x = 0\}$.

Given a prime number $p$, the algebraic expansion of $Bal$ corresponding to the EFD-sentence $\delta_p$ is, by definition, obtained from $Bal$ by adding the axiom:
\begin{equation}
x \imp p\mathsf{d}_p(x) \tag{$A_p$}
\end{equation}
and the rule $\mathrm{U}_{\{x \imp pz\}}$. Since this rule is derivable in $Bal$, the expansion is obtained simply by adding $A_p$. For a set $S$ of prime numbers define $Bal^S$ as the expansion of $Bal$ by the axioms $\{A_p: p \in S\}$. Note that, since $Bal^S$ is an axiomatic expansion of $Bal$, its equivalent algebraic semantics is a variety. These expansions were also considered in \cite{Caicedo07-Implicit-in-Luka} where it is proved that every implicit connective in the logic $Bal^{Primes}$ is explicit.

Recall that an expansion $L'$ of a logic $L$ is called {\em conservative} provided that for each set of $L$-formulas $\Gamma \cup \{\varphi\}$ we have that $\Gamma \vdash_{L'} \varphi$ implies $\Gamma \vdash_L \varphi$.

\begin{theorem} \label{TEO: exp alg de Bal}
\
\begin{enumerate}
\item Every algebraic expansion of $Bal$ is $\tau_\mathcal{G}$-equipollent to exactly one of the following:
\begin{itemize}
\item Inconsistent Logic,
\item $Bal^S$ for some set $S$ of prime numbers.
\end{itemize}
\item The algebraic expansions of $Bal$ form a lattice isomorphic with $\mathbf{2}^\omega \oplus \mathbf{1}$ when ordered by $\tau_\mathcal{G}$-morphisms.
\item Given sets $S,S'$ of prime numbers with $S \subseteq S'$, the expansion $Bal^{S'}$ is conservative over $Bal^S$.
\end{enumerate}
\end{theorem}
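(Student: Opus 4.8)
The plan is to obtain the three parts from the machinery already in place, with only Part 3 requiring genuinely new algebraic input.

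\emph{Parts 1 and 2.} These I would read off directly from the correspondence between algebraic expansions and AE-subclasses. By Theorem \ref{TEO: piedra fundacional} every algebraic expansion of $Bal$ has the form $Bal^\Sigma$ for a set $\Sigma$ of EFD-sentences over $\mathcal{G}$, and by Theorem \ref{TEO: equivalencias interpretaciones}.2 two such expansions $Bal^\Sigma$, $Bal^{\Sigma'}$ are $\tau_\mathcal{G}$-equipollent precisely when $\mathcal{G} \cap \Mod(\Sigma) = \mathcal{G} \cap \Mod(\Sigma')$. I would then invoke Theorem \ref{TEO: clases AE de l-grupos}: each $\Sigma$ either has only trivial models, whose expansion is the inconsistent logic, or is equivalent over $\mathcal{G}$ to a unique $\Sigma_S$, whose expansion is $Bal^{\Sigma_S} = Bal^S$; injectivity of $S \mapsto \Sigma_S$ supplies the ``exactly one'' in Part 1. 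For Part 2 I would quote the duality recorded after Theorem \ref{TEO: equivalencias interpretaciones}: the expansions modulo equipollency, ordered by $\tau_\mathcal{G}$-morphisms, form a lattice dually isomorphic to the lattice $\Lambda$ of AE-subclasses. Since $\Lambda \cong \mathbf{1} \oplus \mathbf{2}^\omega$ and both $\mathbf{1}$ and the Boolean cube $\mathbf{2}^\omega$ are self-dual, its dual is $\mathbf{2}^\omega \oplus \mathbf{1}$, as claimed.

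\emph{Part 3.} Here real work is needed, and I would first translate conservativity into algebra through the algebraization of Theorem \ref{TEO: piedra fundacional}. Writing $\mathcal{Q}_S := \mathcal{G}^{\Sigma_S}$ for the equivalent algebraic semantics of $Bal^S$ (the $S$-divisible $\ell$-groups equipped with the division operations $\mathsf{d}_p$, $p \in S$), conservativity of $Bal^{S'}$ over $Bal^S$ becomes the statement that every consequence $\varepsilon(\Gamma) \vDash_{\mathcal{Q}_{S'}} \varepsilon(\varphi)$ between $\tau_{\Sigma_S}$-equations already holds in $\mathcal{Q}_S$. I would argue the contrapositive via embeddings: a failure $\varepsilon(\Gamma) \not\vDash_{\mathcal{Q}_S} \varepsilon(\varphi)$ witnessed by some $\mathbf{A} \in \mathcal{Q}_S$ transfers to $\mathcal{Q}_{S'}$ as soon as $\mathbf{A}$ admits a $\tau_{\Sigma_S}$-embedding into the $\tau_{\Sigma_S}$-reduct of a member of $\mathcal{Q}_{S'}$, since equations are both preserved and reflected by embeddings.

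The crux is therefore the embedding lemma. I would build it from divisible hulls: representing the $\ell$-group reduct of $\mathbf{A}$ as a subdirect product of totally ordered $\ell$-groups $\mathbf{G}_i$ (Birkhoff's subdirect representation together with Lemma \ref{LEMA: props de l-grupos}.(\ref{ITEM: fsi = to en G})), each $\mathbf{G}_i$ embeds into its divisible hull, again totally ordered, so the $\ell$-group reduct of $\mathbf{A}$ embeds into a divisible, hence $S'$-divisible, $\ell$-group $\mathbf{B}_0$. Equipping $\mathbf{B}_0$ with the operations $\mathsf{d}_p$ ($p \in S'$), which exist and are unique by $S'$-divisibility and torsion-freeness, yields $\mathbf{B} \in \mathcal{Q}_{S'}$. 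The one subtle point, which I expect to be the main obstacle, is that this inclusion respects the interpreted divisions for $p \in S \subseteq S'$: for $a \in A$ the element $\mathsf{d}_p^{\mathbf{A}}(a)$ is also a $p$-th part of $a$ inside $\mathbf{B}_0$, and torsion-freeness forces $p$-th parts to be unique, so it must coincide with $\mathsf{d}_p^{\mathbf{B}}(a)$. This makes the inclusion a $\tau_{\Sigma_S}$-embedding and closes the argument; everything else is routine bookkeeping with the preservation properties of embeddings and the results already established.
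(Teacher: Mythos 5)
Your Parts 1 and 2 coincide with the paper's treatment, which likewise reads them off from Theorems \ref{TEO: piedra fundacional}, \ref{TEO: equivalencias interpretaciones} and \ref{TEO: clases AE de l-grupos}. For Part 3 you take a genuinely different route. The paper reduces conservativity (using finitarity) to the transfer of $\tau_{\Sigma_S}$-quasi-identities from $\mathcal{V}'$ to $\mathcal{V}$ and settles this with a single generating algebra: it asserts that $\mathbf{Q}_S$, the rationals with the divisions $\mathsf{d}_p$ for $p\in S$, satisfies $\mathsf{Q}(\mathbf{Q}_S)=\mathcal{V}$, so a quasi-identity valid in $\mathcal{V}'$ holds in $\mathbf{Q}_{S'}$, hence in its reduct $\mathbf{Q}_S$, hence throughout $\mathcal{V}$. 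You instead prove an embedding lemma: every $\mathbf{A}\in\mathcal{Q}_S$ embeds, as a $\tau_{\Sigma_S}$-algebra, into the reduct of a member of $\mathcal{Q}_{S'}$, via the subdirect decomposition into totally ordered factors, the divisible hull of each factor, and uniqueness of $p$-th parts in torsion-free groups to see that the inclusion respects the $\mathsf{d}_p$ for $p\in S$. Both arguments are correct. Yours is more self-contained, since it does not rely on the (unproved, ``not hard to show'') claim that $\mathbf{Q}_S$ generates $\mathcal{V}$ as a quasivariety, and it yields conservativity for arbitrary (not necessarily finite) premise sets without invoking finitarity; the paper's argument is shorter once the generation claim is granted. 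Either way the key algebraic fact is the same: divisions, when they exist, are unique, so enlarging the set of primes never perturbs the operations already present.
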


\begin{proof}
Items 1. and 2. follow from Theorems \ref{TEO: equivalencias interpretaciones} and \ref{TEO: clases AE de l-grupos}. We prove 3.

Fix sets of prime numbers $S \subseteq S'$, and let $\mathcal{V}$ and $\mathcal{V}'$ be the equivalent algebraic semantics of $Bal^S$ and $Bal^{S'}$, respectively. Since $Bal^{S'}$ is finitary, to prove 3. it is enough to show that any quasi-identity in the language of $\mathcal{V}$ valid in $\mathcal{V}'$ is also valid in $\mathcal{V}$. Let $\mathbf{Q}_S$ be the $\ell$-group of rational numbers expanded with  the divisions by the primes in $S$. It is not hard to show that $\mathbf{Q}_S$ generates $\mathcal{V}$ as a quasivariety, that is, $\mathsf{Q}(\mathbf{Q}_S) = \mathcal{V}$. Now let $\varphi$ be a quasi-identity in the language of $\mathcal{V}$ that is valid in $\mathcal{V}'$. Then, we have that $\mathbf{Q}_{S'} \vDash \varphi$, and thus, $\mathbf{Q}_S \vDash \varphi$. Since $\mathsf{Q}(\mathbf{Q}_S) = \mathcal{V}$, the proof is finished.
\end{proof}

\section{Algebraic expansions of perfect MV-algebras and their logic}
\label{SECTION: hoops and MV}

In this section we characterize the AE-subclasses of the variety $\mathsf{V}(\mathcal{P})$, where $\mathcal{P}$ is the class of perfect MV-algebras. So we also obtain a full description of the algebraic expansions of $L_\mathcal{P}$, the Logic of Perfect MV-Algebras (see, e.g., \cite{BelDiNGer07-PerfectMVAlg}).

Our approach is to export the results for Abelian $\ell$-groups to perfect MV-algebras, exploiting the connection between these two classes (see \cite{Mundici86-MappAbelLGroupsStrongUnitMVAlg,Mundici86-InterpAFinLuk,DiNLet94-PerfectMVCategEquivToAbelLGroups}). 
We first translate the classification of EFD-sentences given in Theorem \ref{TEO: EFD in l-groups} to cancellative hoops ---the positive cones of Abelian $\ell$-groups. These structures provide a stepping stone to carry our results over to perfect MV-algebras.

\subsection{EFD-sentences on cancellative hoops}

Given an Abelian $\ell$-group $\mathbf{G}$, its {\em positive cone} is the subset $G^+ := \{x \in G: x \geq 0\}$. We define the algebraic structure $\mathbf{G}^+ := (G^+, +, \dotdiv, 0)$ where $x \dotdiv y := (x-y) \vee 0$. The lattice structure of $G^+$ (as a sublattice of $\mathbf{G}$) is determined by the operations $+$ and $\dotdiv$. Indeed, for every $x,y \in G^+$ we have that $x \vee y = x + (y \dotdiv x)$ and $x \wedge y = x \dotdiv (x \dotdiv y)$.

We write $\mathcal{H}$ for the class of positive cones of Abelian $\ell$-groups considered as algebras in the language $\tau_\mathcal{H} := \{+, \dotdiv, 0\}$. The members of $\mathcal{H}$ are known as {\em cancellative hoops}; see e.g. \cite{Ferreirim92-Thesis,BloFer00-OnStructHoops}. Given a cancellative hoop $\mathbf{A}$, there is (up to isomorphism) a unique Abelian $\ell$-group whose positive cone is isomorphic with $\mathbf{A}$ (see \cite{Birkhoff40-Book-LatticeTheory}); we write $\mathbf{A}$ for this $\ell$-group. Moreover, $\mathbf{A}$ is totally ordered if and only if $\mathbf{A}$ is. We write $\mathcal{H}_\mathrm{to}$ for the class of totally ordered cancellative hoops.

%A {\em cancellative hoop} (following CITA) is a structure $\mathbf{A}$ in the language $\tau_\mathcal{H} := \{+,\dotdiv,0\}$ such that:
%\begin{itemize} \setlength{\itemsep}{0pt}\setlength{\parskip}{0pt}
%\item $(A,+,0)$ is an Abelian monoid,
%\item $x + (y \dotdiv x) = y + (x \dotdiv y)$,
%\item $(x \dotdiv y) \dotdiv z = x \dotdiv (y + z)$,
%\item $x \dotdiv x = 0$,
%\item $0 \dotdiv x = 0$,
%\item $(x+y) \dotdiv x = y$.
%\end{itemize}
%The positive cone of any Abelian $\ell$-group, with the group addition, 0, and the operation $\dotdiv$ defined by $x \dotdiv y := (x-y) \vee 0$, is a cancellative hoop. Moreover, any cancellative hoop may be produced in this fashion (see CITA). In particular, given a totally ordered cancellative hoop $\mathbf{A}$, there is a totally ordered Abelian $\ell$-group $\mathbf{A}^*$ whose positive cone (with the appropriate operations) is isomorphic to $\mathbf{A}$. Indeed, let $A^* = A \cup \{a': a \in A, a \ne 0\}$ and extend the definition of $+$ to $A^*$ in the following way: $a' + b' := (a+b)'$ and $c + b' := b' + c := \begin{cases} c \dotdiv b & \text{ if } c \geq b, \\ (b \dotdiv c)' & \text{ if } c < b, \end{cases}$ for every $a,b,c \in A$, $a,b \ne 0$. DEFINIR ORDEN

\begin{lemma} \label{LEMA: props de hoops cancelativos}
\
\begin{enumerate}
\item The class $\mathcal{H}$ is an arithmetical variety.
\item For every nontrivial $\mathbf{A} \in \mathcal{H}$, we have $\mathsf{Q}(\mathbf{A}) = \mathsf{V}(\mathbf{A}) = \mathcal{H}$.
\item A cancellative hoop is finitely subdirectly irreducible if and only if it is nontrivial and totally ordered.
\end{enumerate}
\end{lemma}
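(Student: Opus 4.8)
The plan is to leverage the positive-cone correspondence $\mathbf{G} \mapsto \mathbf{G}^+$ between $\mathcal{G}$ and $\mathcal{H}$, transferring each item from its $\ell$-group analogue in Lemma \ref{LEMA: props de l-grupos}. Write $\mathbf{G}_\mathbf{A}$ for the $\ell$-group associated with a cancellative hoop $\mathbf{A}$ (so $\mathbf{G}_\mathbf{A}^+ = \mathbf{A}$; this is the $\ell$-group denoted $\mathbf{A}$ in the text). First I would record the features of this correspondence that drive the transfer: \textbf{(a)} it is a bijection between $\mathcal{G}$ and $\mathcal{H}$ (up to isomorphism) with $\mathbf{A}$ nontrivial iff $\mathbf{G}_\mathbf{A}$ is, and $\mathbf{A}$ totally ordered iff $\mathbf{G}_\mathbf{A}$ is; \textbf{(b)} taking positive cones commutes with $\mathsf{S}$, $\mathsf{P}$ and $\mathsf{P_U}$ --- for $\mathsf{S}$ because a subhoop $\mathbf{A} \subseteq \mathbf{G}^+$ satisfies $\mathbf{A} = \mathbf{H}^+$ for $\mathbf{H} := \{a - b : a,b \in A\}$, and for $\mathsf{P}, \mathsf{P_U}$ because $x \geq 0$ together with the operations $+,\dotdiv,0$ are first-order and hence commute with products and (by {\L}o\'s) with ultraproducts; \textbf{(c)} the congruence lattices $\mathrm{Con}(\mathbf{A})$ and $\mathrm{Con}(\mathbf{G}_\mathbf{A})$ are order-isomorphic via restriction/extension, with the diagonal of $\mathbf{A}$ corresponding to the diagonal of $\mathbf{G}_\mathbf{A}$.

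For item 1, rather than transfer, I would argue directly. Since the lattice operations are $\tau_\mathcal{H}$-terms ($x \vee y = x + (y \dotdiv x)$ and $x \wedge y = x \dotdiv (x \dotdiv y)$, as recorded before the lemma), $\mathcal{H}$ has a lattice reduct and is therefore congruence distributive. For congruence permutability I would exhibit the Maltsev term $p(x,y,z) := ((x \dotdiv y) + z) \wedge ((z \dotdiv y) + x)$; a short $\ell$-group computation using $(u \dotdiv v) + v = u \vee v$ and lattice absorption gives $p(x,y,y) = x$ and $p(x,x,y) = y$, identities that hold in every $\ell$-group and hence in every cancellative hoop. As arithmetical means congruence distributive and permutable, item 1 follows.

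For item 2 I would transfer Lemma \ref{LEMA: props de l-grupos}.(\ref{ITEM: minimal como quasi}). The inclusions $\mathsf{Q}(\mathbf{A}) \subseteq \mathsf{V}(\mathbf{A}) \subseteq \mathcal{H}$ are immediate. For $\mathcal{H} \subseteq \mathsf{Q}(\mathbf{A})$, take $\mathbf{B} \in \mathcal{H}$; then $\mathbf{G}_\mathbf{B} \in \mathcal{G} = \mathsf{Q}(\mathbf{G}_\mathbf{A}) = \mathsf{ISPP_U}(\mathbf{G}_\mathbf{A})$, so $\mathbf{G}_\mathbf{B}$ embeds in a product of ultrapowers of $\mathbf{G}_\mathbf{A}$. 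Applying the positive-cone functor and using that it commutes with $\mathsf{S},\mathsf{P},\mathsf{P_U}$ yields $\mathbf{B} = \mathbf{G}_\mathbf{B}^+ \in \mathsf{ISPP_U}(\mathbf{A}) = \mathsf{Q}(\mathbf{A})$. For item 3, I would use the order-isomorphism in (c): $\mathbf{A}$ is finitely subdirectly irreducible iff its diagonal is finitely meet-irreducible in $\mathrm{Con}(\mathbf{A})$ iff the diagonal of $\mathbf{G}_\mathbf{A}$ is finitely meet-irreducible in $\mathrm{Con}(\mathbf{G}_\mathbf{A})$ iff $\mathbf{G}_\mathbf{A}$ is finitely subdirectly irreducible; by Lemma \ref{LEMA: props de l-grupos}.(\ref{ITEM: fsi = to en G}) this happens exactly when $\mathbf{G}_\mathbf{A}$ is nontrivial and totally ordered, i.e., by (a), when $\mathbf{A}$ is.

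The main obstacle I anticipate is pinning down fact (c): that the congruence lattices of a cancellative hoop and of its $\ell$-group match order-isomorphically. This is where the positive-cone correspondence must be shown to be more than a bijection of underlying objects; concretely one checks that every hoop congruence $\psi$ on $\mathbf{A}$ extends uniquely to an $\ell$-group congruence $\bar{\psi}$ on $\mathbf{G}_\mathbf{A}$ via $(a-b)\mathrel{\bar{\psi}}(c-d)$ iff $a+d \mathrel{\psi} c+b$, that restriction is its monotone inverse, and that both maps send diagonal to diagonal. Everything else --- items 1 and 2, and the preservation facts in (a) and (b) --- is routine once this correspondence is established.
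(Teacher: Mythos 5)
The paper itself does not really prove this lemma---it simply cites the hoop literature (Bosbach, Blok--Ferreirim, Ferreirim)---so your transfer argument through the positive-cone correspondence is a genuinely different and essentially self-contained route, and in outline it is sound. The Maltsev-term computation checks out: inside the enveloping $\ell$-group one gets $p(x,y,z)=((x-y+z)\vee z)\wedge((x-y+z)\vee x)=(x-y+z)\vee(x\wedge z)$, whence $p(x,y,y)=x$ and $p(x,x,y)=y$ by absorption, and since every member of $\mathcal{H}$ is such a positive cone these identities hold throughout $\mathcal{H}$. The reduction of item 2 to Lemma \ref{LEMA: props de l-grupos}.(\ref{ITEM: minimal como quasi}) via commutation of the positive-cone construction with $\mathsf{S}$, $\mathsf{P}$, $\mathsf{P_U}$ is fine, and item 3 does follow from the congruence-lattice isomorphism in your fact (c), which you rightly single out as the real content to be verified.

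There is, however, one concrete gap: nowhere do you address closure of $\mathcal{H}$ under homomorphic images, and this is needed twice. Item 1 asserts that $\mathcal{H}$ is a \emph{variety}, but your argument for it only establishes congruence distributivity and permutability, while your facts (a)--(b) only give closure under $\mathsf{I}$, $\mathsf{S}$, $\mathsf{P}$, $\mathsf{P_U}$, i.e., that $\mathcal{H}$ is a quasivariety; and in item 2 the inclusion $\mathsf{V}(\mathbf{A})\subseteq\mathcal{H}$, which you call immediate, presupposes exactly this $\mathsf{H}$-closure. The gap is fixable from your own fact (c): if $\theta$ is a congruence of $\mathbf{A}=\mathbf{G}^+$ and $\bar{\theta}$ its extension to $\mathbf{G}$, then $a/\theta\mapsto a/\bar{\theta}$ is an isomorphism of $\mathbf{A}/\theta$ onto $(\mathbf{G}/\bar{\theta})^+$ (injective because restriction inverts extension, surjective because every element $\geq 0$ of $\mathbf{G}/\bar{\theta}$ has the form $(g\vee 0)/\bar{\theta}$ with $g\vee 0\in G^+$), so homomorphic images of positive cones are again positive cones. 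You should state this explicitly; it also means that the verification of (c)---in particular the compatibility of $\bar{\psi}$ with $\vee$ and $\wedge$, which follows from a short computation using that $\vee$ restricted to the cone is a $\tau_\mathcal{H}$-term operation---is load-bearing for all three items, not just item 3.
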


\begin{proof}
These results follow from the general theory of hoops. See \cite{Bosbach69-KomplHalbgruppen-AxiomArith,Bosbach70-KomplHalbgruppen-KongQuot,BloFer00-OnStructHoops,Ferreirim92-Thesis}.
%
%
%Variedad: Amer / Bosbach 1
%
%Distributividad y permutabilidad: Bosbach 2
%
%Generada como cuasi: Blok-Ferreirim (cualquiera de los dos)
%
%FSI: se deduce de que $Fg(x \vee y) = Fg(x) \cap Fg(y)$ y prelinealidad.
\end{proof}

\begin{lemma}
Given EFD-sentences $\varphi, \psi$, if $\varphi \eq \psi$ in $\mathcal{H}_\mathrm{to}$, then $\varphi \eq \psi$ in $\mathcal{H}$.
\end{lemma}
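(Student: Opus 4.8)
The plan is to mimic exactly the argument used for Abelian $\ell$-groups in Lemma \ref{LEMA: equiv en Gto -> equiv en G}, since the class $\mathcal{H}$ of cancellative hoops enjoys all the structural properties of $\mathcal{G}$ that made that proof work. Specifically, Lemma \ref{LEMA: props de hoops cancelativos} supplies the three ingredients we need: $\mathcal{H}$ is an arithmetical variety, every nontrivial cancellative hoop generates all of $\mathcal{H}$ as a quasivariety (so $\mathcal{H}$ has no proper nontrivial subquasivarieties), and the finitely subdirectly irreducible members of $\mathcal{H}$ are precisely the nontrivial totally ordered ones. These are the precise analogues of Lemma \ref{LEMA: props de l-grupos}.(\ref{ITEM: minimal como quasi}), (\ref{ITEM: fsi = to en G}), and the arithmeticity clause.

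First I would assume $\varphi \eq \psi$ in $\mathcal{H}_\mathrm{to}$, fix a nontrivial $\mathbf{A} \in \mathcal{H}$, and suppose $\mathbf{A} \vDash \varphi$. The goal is to conclude $\mathbf{A} \vDash \psi$ (the reverse direction being symmetric). Writing $\varphi \equiv E(\varphi) \band U(\varphi)$, I would argue that $\mathsf{H}(\mathbf{A}) \vDash \varphi$: the universal part $U(\varphi)$, being a conjunction of quasi-identities, is preserved from $\mathbf{A}$ to its homomorphic images because $\mathsf{Q}(\mathbf{A}) = \mathcal{H}$ forces every homomorphic image to again lie in $\mathcal{H}$ and hence satisfy the same quasi-identities; and $E(\varphi)$ is preserved by homomorphic images as noted among the basic facts about EFD-sentences. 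Thus $\mathsf{H}(\mathbf{A})_\mathrm{fsi} \vDash \varphi$, and since by Lemma \ref{LEMA: props de hoops cancelativos} every finitely subdirectly irreducible cancellative hoop is totally ordered, the hypothesis $\varphi \eq \psi$ in $\mathcal{H}_\mathrm{to}$ gives $\mathsf{H}(\mathbf{A})_\mathrm{fsi} \vDash \psi$.

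The final step is to pull $\psi$ back down from the finitely subdirectly irreducible factors to $\mathbf{A}$ itself, which is exactly what Lemma \ref{LEMA: aritm + fsi univ} is designed for. To invoke it I need $\mathcal{H}$ to be arithmetical and $\mathcal{H}_\mathrm{fsi} \cup \{\text{trivial algebras}\}$ to be a universal class. Arithmeticity is given by Lemma \ref{LEMA: props de hoops cancelativos}.(1). The universality of $\mathcal{H}_\mathrm{fsi} \cup \{\text{trivial algebras}\}$ amounts to saying that the class of totally ordered cancellative hoops (together with the trivial algebra) is universal; this is the analogue of the $\ell$-group fact and follows since being totally ordered is expressible by a universal sentence and the hoop axioms are universal. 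With this in hand, Lemma \ref{LEMA: aritm + fsi univ} applied to $\psi$ yields $\mathbf{A} \vDash \psi$, completing the proof.

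I do not expect any genuine obstacle, since the proof is a transcription of the $\ell$-group case; the only point requiring a moment's care is confirming that $\mathcal{H}_\mathrm{to} \cup \{\text{trivial}\}$ is a universal class, which is needed to legitimately apply Lemma \ref{LEMA: aritm + fsi univ}. If the excerpt's Lemma \ref{LEMA: props de hoops cancelativos} is taken to encapsulate the full parallel with $\mathcal{G}$, even that verification is routine and the entire argument is a one-paragraph mirror of Lemma \ref{LEMA: equiv en Gto -> equiv en G}.
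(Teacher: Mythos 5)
Your proof is correct and is precisely the argument the paper intends: its own proof of this lemma is the one-line remark that it is analogous to Lemma \ref{LEMA: equiv en Gto -> equiv en G} using Lemmas \ref{LEMA: aritm + fsi univ} and \ref{LEMA: props de hoops cancelativos}, and your write-up carries out exactly that transcription, including the one point genuinely worth checking (that $\mathcal{H}_\mathrm{fsi}\cup\{\text{trivial algebras}\}=\mathcal{H}_\mathrm{to}$ is a universal class, since total order is expressed by the universal sentence $\forall x\, y\,(x \dotdiv y = 0 \bor y \dotdiv x = 0)$).
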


\begin{proof}
The proof is analogous to the one for Lemma \ref{LEMA: equiv en Gto -> equiv en G} using Lemmas \ref{LEMA: aritm + fsi univ} and \ref{LEMA: props de hoops cancelativos}.
\end{proof}

Our next step is to provide a translation of EFD-sentences in $\tau_\mathcal{H}$ into EFD-sentences in $\tau_\mathcal{G}$. First, for an $\tau_\mathcal{H}$-term $t(\bar{x},\bar{z})$ define recursively the $\tau_\mathcal{G}$-term $t^{*}(\bar{x},\bar{z})$ by
\begin{itemize}
\item $0^{*} :=0$
\item $x_{i}^{*} := x_{i}\vee-x_{i}$
\item $z_{j}^{*} := z_{j}$
\item $(t_{1}+t_{2})^{*} := t_{1}^{*}+t_{2}^{*}$
\item $(t_{1}\dotdiv t_{2})^{*} := (t_{1}^{*}-t_{2}^{*})\vee0$.
\end{itemize}
Next, given a conjunction of equations $\alpha(\bar{x},\bar{z}) := \displaystyle \bigand_{i=1}^k t_i(\bar{x},\bar{z}) = s_i(\bar{x},\bar{z})$ in the language $\tau_\mathcal{H}$, define the $\tau_\mathcal{G}$-formula $$\alpha^*(\bar{x},\bar{z}) := \displaystyle \bigand_{i=1}^k t_i^*(\bar{x},\bar{z}) = s_i^*(\bar{x},\bar{z}).$$ Finally, if $\varphi := \forall \bar{x} \exists ! \bar{z} \, \alpha(\bar{x},\bar{z})$ is an EFD-sentence in the language $\tau_\mathcal{H}$, we define the translation of $\varphi$ into $\tau_\mathcal{G}$ as the sentence
\[
\varphi^{*} := \forall \bar{x} \exists! \bar{z} \, \alpha^{*}(\bar{x},\bar{z}) \band z_{1} \geq 0 \band \dots \band z_{m} \geq 0.
\]
The next lemma shows that our translations work as intended.

\begin{lemma} \label{LEMA: prop basicas de la estrella}
Let $\mathbf{A} \in \mathcal{H}_\mathrm{to}$, let $\alpha(\bar{x},\bar{z})$ be a conjunction of $\tau_\mathcal{H}$-equations, and let $\varphi$ be an EFD-sentence in the language $\tau_\mathcal{H}$.
\begin{enumerate}
%\item For all $\bar{a},\bar{b}$ from $A$ we have $t^\mathbf{A}(\bar{a},\bar{b})=t^{*\mathbf{A}^*}(\bar{a},\bar{b})$.
\item For all $\bar{a},\bar{b}$ from $A$ we have $\mathbf{A} \vDash \alpha(\bar{a},\bar{b})$ if and only if $\mathbf{A}^* \vDash \alpha^*(\bar{a},\bar{b})$.
\item $\mathbf{A} \vDash \varphi$ if and only if $\mathbf{A}^* \vDash\varphi^*$.
\end{enumerate}
\end{lemma}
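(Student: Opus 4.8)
The plan is to prove both statements together, with part~(1) serving as the main technical step and part~(2) following almost immediately. The key object is the correspondence between a totally ordered cancellative hoop $\mathbf{A}$ and its associated $\ell$-group $\mathbf{A}^*$ (the unique Abelian $\ell$-group whose positive cone is $\mathbf{A}$), together with the recursively defined translation $t \mapsto t^*$. The guiding principle is that the starred terms are designed so that, when all the $z$-variables are interpreted by \emph{nonnegative} elements, the $\tau_\mathcal{H}$-term $t$ and the $\tau_\mathcal{G}$-term $t^*$ compute the same value. The subtlety handled by the definition is the treatment of the free $x$-variables: since in $\varphi^*$ the existential witnesses $\bar{z}$ are constrained to be nonnegative but the $\bar{x}$ range over \emph{all} of $\mathbf{A}^*$, the clause $x_i^* := x_i \vee -x_i = |x_i|$ replaces each $x_i$ by its absolute value, guaranteeing that every argument fed into the hoop operations lands in the positive cone $G^+ = A$.

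For part~(1), I would proceed by induction on the structure of the term $t(\bar{x},\bar{z})$, establishing the substitution lemma that for all $\bar{a}$ from $A$ and all \emph{nonnegative} $\bar{b}$ from $A$ (i.e.\ $\bar{b}$ from $G^+ = A$), one has $t^{\mathbf{A}}(\bar{a},\bar{b}) = (t^*)^{\mathbf{A}^*}(\bar{a},\bar{b})$. The base cases are the three atomic clauses: $0^* = 0$ matches directly; $z_j^* = z_j$ matches because the $z$-arguments are already nonnegative and the identity embedding $A \hookrightarrow A^*$ preserves them; and $x_i^* = x_i \vee -x_i$ computes $|a_i|$, which lies in $G^+ = A$ and agrees with the intended positive-cone value. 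The inductive step for $+$ is immediate since $(t_1 + t_2)^* = t_1^* + t_2^*$ and $+$ is the same operation on $\mathbf{A}$ and $\mathbf{A}^*$. The inductive step for $\dotdiv$ uses precisely the defining identity $x \dotdiv y = (x-y) \vee 0$ recalled in the excerpt, so that $(t_1 \dotdiv t_2)^* = (t_1^* - t_2^*) \vee 0$ reproduces the hoop truncated subtraction on the positive cone. Once this term-level claim is in hand, part~(1) of the lemma follows by applying it to each of the finitely many equations $t_i = s_i$ comprising $\alpha$ and conjoining the resulting biconditionals; the nonnegativity hypothesis on $\bar{b}$ is exactly what the claim requires.

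For part~(2), I would unwind both directions using part~(1) and the shape of $\varphi^* = \forall \bar{x}\, \exists!\, \bar{z}\, (\alpha^*(\bar{x},\bar{z}) \band z_1 \geq 0 \band \dots \band z_m \geq 0)$. Suppose $\mathbf{A} \vDash \varphi$: given $\bar{a}$ from $A^* = G$, one first replaces each $a_i$ by $|a_i| \in A$, obtains from $\varphi$ a unique nonnegative witness tuple $\bar{b}$ in $\mathbf{A}$ for these absolute values, and then part~(1) certifies that this same $\bar{b}$ satisfies $\alpha^*$ together with the nonnegativity constraints in $\mathbf{A}^*$; uniqueness transfers because the constraints $z_j \geq 0$ confine any competing witness to the positive cone, where the biconditional of part~(1) again applies. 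Conversely, if $\mathbf{A}^* \vDash \varphi^*$, then for $\bar{a}$ from $A$ the nonnegativity clauses force the unique witness into $A = G^+$, and part~(1) carries the satisfaction of $\alpha^*$ back to satisfaction of $\alpha$ in $\mathbf{A}$, with uniqueness preserved identically.

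The main obstacle I anticipate is the careful bookkeeping around the $x_i^* = |x_i|$ clause, specifically ensuring that replacing $\bar{x}$ by $|\bar{x}|$ never causes a mismatch between $\varphi$ and $\varphi^*$. Since $\mathbf{A}$ and $\mathbf{A}^*$ are totally ordered, $|a_i|$ is simply the value itself when $a_i \geq 0$ and its negative otherwise; the delicate point is verifying that the quantifier $\forall \bar{x}$ in $\varphi^*$ ranging over all of $G$ yields exactly the same family of constraints as $\forall \bar{x}$ in $\varphi$ ranging over $A = G^+$, because the translation has symmetrized the $x$-dependence. This is where totality of the order (guaranteeing $|a_i| \in \{a_i, -a_i\}$ and hence $|a_i| \in G^+$) is used essentially, and it is the hypothesis $\mathbf{A} \in \mathcal{H}_\mathrm{to}$ that makes the whole argument go through cleanly; attempting the same for arbitrary cancellative hoops would require more care.
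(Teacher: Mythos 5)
Your proposal is correct and follows essentially the same route as the paper: part (1) by an induction on terms that uses $a \vee -a = a$ for $a$ in the positive cone, and part (2) by replacing the $x$-arguments with their absolute values and exploiting that $\alpha^*$ factors through the idempotent map $x_i \mapsto x_i \vee -x_i$ to transfer existence and uniqueness of witnesses in both directions. The paper's proof is simply a terser rendering of the same argument (it even leaves the right-to-left direction of part (2) to the reader).
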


\begin{proof}
To prove 1. it suffices to show that $t^\mathbf{A}(\bar{a},\bar{b})=t^{*\mathbf{A}^*}(\bar{a},\bar{b})$ for all $\bar{a},\bar{b}$ from $A$, which is a easy induction on the structure of $t(\bar{x},\bar{z})$.

Next, we prove the left-to-right implication of 2. Assume $\varphi := \forall \bar{x} \exists! \bar{z} \, \alpha(\bar{x},\bar{z})$ is valid in $\mathbf{A}$. Given $c_1,\dots,c_n \in A^*$, for each $i$ let $a_i := c_i \vee -c_i$ (note that $a_i \in A$). There are $b_1,\dots,b_m \in A$ such that $\mathbf{A} \vDash \alpha(\bar{a},\bar{b})$; thus, by item 1., $\mathbf{A}^* \vDash \alpha^*(\bar{a},\bar{b})$. By the definition of $\alpha^*$, this is equivalent to $\mathbf{A}^* \vDash \alpha^*(\bar{c},\bar{b})$; hence $\mathbf{A}^* \vDash E(\varphi^*)$. To prove the uniqueness part suppose there are $c_1,\dots,c_n$, $b_1,\dots,b_m$, $b_1',\dots,b_m' \in A^*$ such that $\mathbf{A}^* \vDash \alpha^*(\bar{c},\bar{b}) \band \alpha^*(\bar{c},\bar{b}') $ and $b_i \geq 0$, $b_i' \geq 0$ for all $i$. If for each $i$ we take $a_i := c_i \vee -c_i$, then we have $\mathbf{A}^* \vDash \alpha^*(\bar{a},\bar{b}) \band \alpha^*(\bar{a},\bar{b}')$, so $\mathbf{A} \vDash \alpha(\bar{a},\bar{b}) \band \alpha(\bar{a},\bar{b}')$. Thus $\bar{b}=\bar{b}'$, and we have shown $\mathbf{A}^* \vDash \varphi^*$.

The right-to-left implication is straightforward and left to the reader.
\end{proof}

From Section \ref{SECTION: l-groups} recall that $\delta_{k}$ is the sentence $\forall x \exists! z \, kz = x$. Note that $\delta_{k}$
is an EFD-sentence in both $\tau_\mathcal{H}$ and $\tau_\mathcal{G}$.

\begin{lemma} \label{lem:def en l-grupos}
For all positive integers $k$ we have $\delta_k^* \eq \delta_{k}$ in $\mathcal{G}$.
\end{lemma}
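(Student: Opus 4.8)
The plan is to first unwind the translation $(\cdot)^*$ on $\delta_k$ and then reduce to the totally ordered case. Since the term $kz$ is built using $+$ alone, the clauses $z_j^* := z_j$ and $(t_1+t_2)^* := t_1^* + t_2^*$ give $(kz)^* = kz$, while $x^* = x \vee -x = |x|$. By the definition of $(\cdot)^*$ on EFD-sentences the clause $z \geq 0$ is appended, so $\delta_k^*$ is (equivalent to) the $\tau_\mathcal{G}$-EFD-sentence $\forall x \exists! z \, (kz = x \vee -x \band z \geq 0)$, read as: every absolute value is uniquely divisible by $k$ by a non-negative element. As $z \geq 0$ is expressible equationally (e.g.\ $z \vee 0 = z$), both $\delta_k$ and $\delta_k^*$ are genuine EFD-sentences over $\tau_\mathcal{G}$, so by Lemma \ref{LEMA: equiv en Gto -> equiv en G} it suffices to prove $\delta_k^* \eq \delta_k$ in $\mathcal{G}_\mathrm{to}$.

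I would then fix a totally ordered $\mathbf{A} \in \mathcal{G}_\mathrm{to}$ and verify both implications, using throughout that $\ell$-groups are torsion-free (which makes uniqueness of $k$-th parts automatic) and that in a totally ordered group $kz \geq 0$ forces $z \geq 0$ for $k \geq 1$. For $\delta_k \Rightarrow \delta_k^*$: assuming every element is divisible by $k$, given $x$ choose $z$ with $kz = |x|$; since $|x| \geq 0$ and $\mathbf{A}$ is a chain, $z \geq 0$, and torsion-freeness gives uniqueness, so $\mathbf{A} \vDash \delta_k^*$. For $\delta_k^* \Rightarrow \delta_k$: given any $g \in A$, its absolute value is $k$-divisible by hypothesis, say $kz = |g|$ with $z \geq 0$; since $\mathbf{A}$ is totally ordered, $|g|$ equals $g$ or $-g$, whence $kz = g$ or $k(-z) = g$, and $g$ is divisible by $k$; uniqueness is again torsion-freeness, so $\mathbf{A} \vDash \delta_k$.

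I do not expect a real obstacle here: the argument is essentially a routine unfolding. The only point requiring care is the extra $z \geq 0$ clause that the translation introduces, and the observation that this constraint is exactly matched by the sign information available in a totally ordered group. This is precisely what makes passing through $\mathcal{G}_\mathrm{to}$ via Lemma \ref{LEMA: equiv en Gto -> equiv en G} the natural move, rather than arguing directly in $\mathcal{G}$, where one would instead have to invoke the subdirect decomposition into chains to recover ``$kz \geq 0$ implies $z \geq 0$''.
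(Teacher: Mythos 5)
Your proposal is correct and follows essentially the same route as the paper: unwind the translation to see that $\delta_k^*$ reads $\forall x\,\exists!\,z\,(kz = x\vee -x \band z\geq 0)$, reduce to $\mathcal{G}_\mathrm{to}$ via Lemma \ref{LEMA: equiv en Gto -> equiv en G}, and then verify both implications using torsion-freeness and the sign analysis $|g|=g$ or $|g|=-g$ in a chain. The paper's proof is the same argument, stated slightly more tersely.
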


\begin{proof}
By Lemma \ref{LEMA: equiv en Gto -> equiv en G}, it suffices to check the equivalence of $\delta_k$ and $\delta_k^*$ in $\mathcal{G}_\mathrm{to}$. Let $\mathbf{G} \in \mathcal{G}_\mathrm{to}$ be such that $\mathbf{G} \vDash \delta_k^*$, i.e., $\mathbf{G}$ satisfies
$$\forall x \exists! z \, (kz = x \vee -x \band z \geq 0).$$
Given $a \in G$, there is $b \geq 0$ in $G$ such that $kb = a \vee -a$. If $a \geq 0$, then $kb = a$. Otherwise, $a \leq 0$ and $kb = -a$, so $k(-b) = a$. This shows that $\mathbf{G} \vDash \delta_k$. Conversely, assume $\mathbf{G} \vDash \delta_k$. Given $a \in G$, there is $b \in G$ such that $kb = a \vee -a$. Since $kb \geq 0$, it follows that $b \geq 0$. Thus $\mathbf{G} \vDash \delta_k^*$.
\end{proof}

We are now ready to prove a characterization of EFD-sentences for $\mathcal{H}$ analogous to Theorem~\ref{TEO: EFD in l-groups}.

\begin{theorem} \label{TEO: phi <-> phi_k in H}
Given an EFD-sentence $\varphi$ with a nontrivial model in $\mathcal{H}$ there is a positive integer $k$
such that $\varphi \eq \delta_{k}$ in $\mathcal{H}$.
\end{theorem}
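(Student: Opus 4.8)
The plan is to transfer the analogous result for Abelian $\ell$-groups (Theorem~\ref{TEO: EFD in l-groups}) across the correspondence between cancellative hoops and $\ell$-groups, using the translation $\varphi \mapsto \varphi^*$ together with Lemmas~\ref{LEMA: prop basicas de la estrella} and~\ref{lem:def en l-grupos}. First I would observe that it suffices to prove the equivalence $\varphi \eq \delta_k$ in $\mathcal{H}_\mathrm{to}$, since the preceding lemma (the hoop analog of Lemma~\ref{LEMA: equiv en Gto -> equiv en G}) then lifts it to all of $\mathcal{H}$. Working in $\mathcal{H}_\mathrm{to}$ is convenient because every such hoop is of the form $\mathbf{A} = \mathbf{G}^+$ for a unique totally ordered $\ell$-group $\mathbf{G} = \mathbf{A}^*$, and conversely every totally ordered $\ell$-group arises this way.

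Next I would use the translation to move $\varphi$ into the $\ell$-group language. Suppose $\varphi$ is an EFD-sentence with a nontrivial model $\mathbf{A}\in\mathcal{H}$; by Lemma~\ref{LEMA: props de hoops cancelativos} the nontrivial model can be taken totally ordered, and then $\mathbf{A}^*$ is a nontrivial totally ordered $\ell$-group satisfying $\varphi^*$ by Lemma~\ref{LEMA: prop basicas de la estrella}. Hence $\varphi^*$ is an EFD-sentence in $\tau_\mathcal{G}$ with a nontrivial model in $\mathcal{G}$, so Theorem~\ref{TEO: EFD in l-groups} yields a positive integer $k$ with $\varphi^* \eq \delta_k$ in $\mathcal{G}$. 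Combining this with Lemma~\ref{lem:def en l-grupos}, which gives $\delta_k^* \eq \delta_k$ in $\mathcal{G}$, I obtain $\varphi^* \eq \delta_k^*$ in $\mathcal{G}$, and in particular this equivalence holds in $\mathcal{G}_\mathrm{to}$.

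The final step is to pull this $\ell$-group equivalence back down to cancellative hoops. For any $\mathbf{A}\in\mathcal{H}_\mathrm{to}$, Lemma~\ref{LEMA: prop basicas de la estrella}(2) gives $\mathbf{A}\vDash\varphi \iff \mathbf{A}^*\vDash\varphi^*$ and likewise $\mathbf{A}\vDash\delta_k \iff \mathbf{A}^*\vDash\delta_k^*$ (noting that $\delta_k$ is an EFD-sentence in $\tau_\mathcal{H}$ and applying the lemma to it). Since $\mathbf{A}^*$ ranges over all of $\mathcal{G}_\mathrm{to}$ as $\mathbf{A}$ ranges over $\mathcal{H}_\mathrm{to}$, and since $\varphi^*\eq\delta_k^*$ in $\mathcal{G}_\mathrm{to}$, it follows that $\mathbf{A}\vDash\varphi \iff \mathbf{A}\vDash\delta_k$ for every $\mathbf{A}\in\mathcal{H}_\mathrm{to}$, i.e.\ $\varphi\eq\delta_k$ in $\mathcal{H}_\mathrm{to}$. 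The hoop analog of Lemma~\ref{LEMA: equiv en Gto -> equiv en G} then promotes this to $\varphi\eq\delta_k$ in $\mathcal{H}$, completing the proof.

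I expect the main subtlety to be bookkeeping around the translation rather than any deep obstacle: one must verify that $\delta_k^*$ really is the translation of the hoop-sentence $\delta_k$ and that Lemma~\ref{LEMA: prop basicas de la estrella} applies uniformly to both $\varphi$ and $\delta_k$, and one must check the claim that the correspondence $\mathbf{A}\mapsto\mathbf{A}^*$ is onto $\mathcal{G}_\mathrm{to}$ so that an equivalence holding for all $\mathbf{A}^*$ transfers to an equivalence in $\mathcal{G}_\mathrm{to}$. All of these are supplied by the setup preceding the theorem, so the argument is essentially a clean concatenation of the translation lemmas with Theorem~\ref{TEO: EFD in l-groups}.
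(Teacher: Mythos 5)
Your proposal is correct and follows essentially the same route as the paper's proof: translate $\varphi$ to $\varphi^*$, invoke Theorem \ref{TEO: EFD in l-groups} together with Lemma \ref{lem:def en l-grupos} to get $\varphi^* \eq \delta_k^*$ in $\mathcal{G}$, transfer back through the equivalences of Lemma \ref{LEMA: prop basicas de la estrella} on $\mathcal{H}_\mathrm{to}$, and lift to $\mathcal{H}$. Your explicit remark that the nontrivial model may be taken totally ordered is a detail the paper leaves implicit, but otherwise the arguments coincide.
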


\begin{proof}
Assume $\varphi$ has a nontrivial model in $\mathcal{H}$. By Lemma \ref{LEMA: prop basicas de la estrella}.2., the sentence $\varphi^*$ has a nontrivial model in $\mathcal{G}$. Thus, by Theorem \ref{TEO: EFD in l-groups} and Lemma \ref{lem:def en l-grupos}, there is a positive integer $k$ such that $\varphi^* \eq \delta_k^{*}$ in $\mathcal{G}$. Now, for every $\mathbf{A} \in \mathcal{H}_\mathrm{to}$ we have 
\begin{align*}
\mathbf{A} \vDash \varphi & \Leftrightarrow \mathbf{A}^* \vDash \varphi^* \\
& \Leftrightarrow \mathbf{A}^* \vDash \delta_k^* \\
& \Leftrightarrow \mathbf{A} \vDash \delta_k.
\end{align*}
Thus $\varphi \eq \delta_k$ in $\mathcal{H}_\mathrm{to}$ and, as a consequence, also in $\mathcal{H}$.
\end{proof}

\subsection{AE-classes of perfect MV-algebras}

The class of MV-algebras is the equivalent algebraic semantics of \L ukasiewicz infinite-valued logic and has been extensively studied \cite{CigDOtMun00-Book-AlgFoundManyValReasoning}. We recall next its definition and some basic facts.

An {\em MV-algebra} is a structure $\mathbf{A}$ in the language $\tau_\mathcal{MV} := \{+,\neg,0\}$ such that:
\begin{itemize} \setlength{\itemsep}{0pt}\setlength{\parskip}{0pt}
\item $(A,+,0)$ is an Abelian monoid,
\item $\neg \neg x = x$,
\item $x + \neg 0 = \neg 0$,
\item $\neg(\neg x + y) + y = \neg (\neg y + x) + x$.
\end{itemize}
We write $\mathcal{MV}$ for the class of MV-algebras. Given $\mathbf{A} \in \mathcal{MV}$ we define the operations $\vee$ and $\wedge$ by $x \vee y := \neg (\neg x + y) + y$ and $x \wedge y := \neg (\neg x \vee \neg y)$. As is well-known, $(A,\vee,\wedge,0,\neg 0)$ is a bounded distributive lattice whose underlying partial ordering is given by $x \leq y$ if and only if $\neg x + y = \neg 0$. Another relevant derived operation on $\mathbf{A}$ is $*$, which is defined by $x * y := \neg (\neg x + \neg y)$. It is also well-known that $(A,*,\neg 0)$ is an Abelian monoid. We define multiples and powers of $a\in A$ recursively by:
\begin{itemize} \setlength{\itemsep}{0pt}\setlength{\parskip}{0pt}
\item $1a := a$ and $a^1 := a$.
\item $(n+1)a := na + a$ and $a^{n+1} := a^n * a$ for any positive integer $n$.
\end{itemize}

Let $\mathbf{A}$ be an MV-algebra. An {\em ideal} of $\mathbf{A}$ is a nonempty down-set that is closed under $+$. The {\em radical} of $\mathbf{A}$ is the intersection of all maximal ideals of $\mathbf{A}$; we denote it by $\rad \mathbf{A}$. We say that $\mathbf{A}$ is {\em perfect} if it is nontrivial and $A = \rad \mathbf{A} \cup \neg \rad \mathbf{A}$ where $\neg \rad \mathbf{A} := \{\neg a: a \in \rad \mathbf{A}\}$ (this definition is equivalent to the original one given in \cite{BelDiNLet93-LocalMV}, see Corollary 4.5 in that reference). The class of perfect MV-algebras is denoted by $\mathcal{P}$. We shall need the fact that $\rad \mathbf{A} = \{a \in A: a^2 = 0\}$ for $\mathbf{A} \in \mathcal{P}$ (see \cite[Prop. 3.6.4]{CigDOtMun00-Book-AlgFoundManyValReasoning}). We denote the two-element MV-algebra by $\mathbf{2}$; clearly $\mathbf{2} \in \mathcal{P}$. Let $\mathcal{P}_\mathrm{to}$ denote the class of totally ordered perfect MV-algebras.

As in the previous sections, the following two lemmas provide an essential tool for our classification of EFD-sentences.

\begin{lemma} \label{LEMA: props de MV perfectas}
\
\begin{enumerate}
\item The variety $\mathsf{V}(\mathcal{P})$ is arithmetical.
\item An algebra in $\mathsf{V}(\mathcal{P})$ is finitely subdirectly irreducible if and only if it is a totally ordered perfect MV-algebra.
\end{enumerate}
\end{lemma}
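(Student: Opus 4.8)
The plan is to establish both parts by transferring the corresponding facts about cancellative hoops (Lemma \ref{LEMA: props de hoops cancelativos}) and Abelian $\ell$-groups (Lemma \ref{LEMA: props de l-grupos}) across the categorical equivalence between perfect MV-algebras and Abelian $\ell$-groups (via the functor $\Delta$ of Di Nola--Lettieri, or equivalently through cancellative hoops). The key structural observation is that for a perfect MV-algebra $\mathbf{A}$, the radical $\rad \mathbf{A} = \{a \in A : a^2 = 0\}$ carries a cancellative-hoop structure under the inherited $+$ and a suitable truncated-subtraction operation, and every element of $\mathbf{A}$ lies in $\rad \mathbf{A} \cup \neg \rad \mathbf{A}$. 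This lets me read off the congruence-theoretic properties of $\mathbf{A}$ from those of its radical, which is a cancellative hoop.

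For part 1 (arithmeticity), recall that a variety is arithmetical precisely when it is congruence distributive and congruence permutable, and that by the Pixley characterization this is witnessed by a single ternary term. First I would note that $\mathsf{V}(\mathcal{P})$, being generated by a class of MV-algebras, inherits congruence distributivity from the lattice reduct: every MV-algebra has an underlying bounded distributive lattice $(A,\vee,\wedge,0,\neg 0)$, so congruence distributivity holds throughout. For congruence permutability I would exhibit a Malcev term explicitly in the MV-language; such a term is well known for MV-algebras (for instance one built from $+$, $\neg$, and the derived operations so that $p(x,y,y) = x$ and $p(x,x,y) = y$), and since it is a term it is preserved in the generated variety $\mathsf{V}(\mathcal{P})$. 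Combining distributivity and permutability gives arithmeticity.

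For part 2 I would argue that the finitely subdirectly irreducible members of $\mathsf{V}(\mathcal{P})$ are exactly the totally ordered perfect MV-algebras. The inclusion that totally ordered perfect MV-algebras are finitely subdirectly irreducible follows because on a totally ordered structure the congruence lattice is well behaved and the diagonal is finitely meet-irreducible. For the converse, I would use Jónsson's lemma: since $\mathsf{V}(\mathcal{P}) = \mathsf{V}(\mathcal{P}_\mathrm{to})$ is congruence distributive and generated by the totally ordered perfect MV-algebras, every subdirectly irreducible (a fortiori every finitely subdirectly irreducible) member lies in $\mathsf{HSP_U}(\mathcal{P}_\mathrm{to})$, and ultraproducts and subalgebras of totally ordered perfect MV-algebras are again totally ordered perfect MV-algebras, as is any nontrivial homomorphic image. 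The fact that $\mathcal{P}_\mathrm{to}$ generates the whole variety, and that it is closed under $\mathsf{S}$ and $\mathsf{P_U}$, is precisely what transfers from the analogous statements for $\mathcal{H}_\mathrm{to}$ in Lemma \ref{LEMA: props de hoops cancelativos} through the equivalence.

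The main obstacle I expect is making the transfer across the categorical equivalence fully rigorous rather than merely plausible: one must verify that the equivalence (and the passage through cancellative hoops) preserves the relevant notions---congruences, subdirect irreducibility, ultraproducts, and the generation of the variety---so that the clean results of Lemma \ref{LEMA: props de hoops cancelativos} really do descend to $\mathsf{V}(\mathcal{P})$. In particular, care is needed because the equivalence is between $\mathcal{P}$ (or $\mathcal{H}$) and $\mathcal{G}$, whereas the statement concerns the \emph{variety} $\mathsf{V}(\mathcal{P})$ generated by $\mathcal{P}$, which contains algebras that are not themselves perfect; one must check that the finitely subdirectly irreducible members of this larger variety are nonetheless forced back into $\mathcal{P}_\mathrm{to}$, which is exactly where Jónsson's lemma and the closure properties of $\mathcal{P}_\mathrm{to}$ do the decisive work.
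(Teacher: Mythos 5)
Your argument is correct in substance, but it is worth noting that the paper does not actually prove this lemma: it simply defers to the general theory of MV-algebras, citing Cignoli--D'Ottaviano--Mundici and Di Nola--Lettieri. What you have written is essentially the standard argument underlying those references, so you are supplying details the paper omits rather than diverging from it. Two remarks on your write-up. First, the framing via the categorical equivalence with Abelian $\ell$-groups is largely a red herring: every concrete step you take (congruence distributivity from the bounded distributive lattice reduct, congruence permutability from a Mal'cev term such as $p(x,y,z) = (x\dotdiv y) + (z \dotdiv (y \dotdiv x))$ with $a \dotdiv b := \neg(\neg a + b)$, and J\'onsson's lemma combined with the closure of $\mathcal{P}_\mathrm{to}$ under $\mathsf{S}$, $\mathsf{P_U}$ and nontrivial $\mathsf{H}$) is a direct argument in the MV-language; the only transfer-flavoured ingredient is the identity $\mathsf{V}(\mathcal{P}) = \mathsf{V}(\mathcal{P}_\mathrm{to})$, which follows from the subdirect representation of MV-algebras into chains plus the fact that perfectness, being expressible by the positive universal sentence $\forall x\, (x^2 = 0 \bor (\neg x)^2 = 0)$ together with nontriviality, is preserved by quotients, subalgebras and ultraproducts. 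Second, two small imprecisions: the forward direction of part 2 deserves one concrete line --- in a chain $a \wedge b = 0$ forces $a = 0$ or $b = 0$, so two nontrivial ideals cannot meet trivially, whence the diagonal is finitely meet-irreducible --- rather than the vague ``the congruence lattice is well behaved''; and your ``a fortiori'' is reversed, since FSI is the weaker property and the strong form of J\'onsson's lemma for congruence distributive varieties gives $\mathcal{V}_{\mathrm{fsi}} \subseteq \mathsf{HSP_U}(\mathcal{K})$ directly, which is exactly the containment you need.
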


\begin{proof}
These results follow from the general theory of MV-algebras. See \cite{CigDOtMun00-Book-AlgFoundManyValReasoning,DiNLet94-PerfectMVCategEquivToAbelLGroups}.
\end{proof}

\begin{lemma} \label{LEMA: equiv en Pto -> equiv en V(P)}
Given EFD-sentences $\varphi, \psi$, if $\varphi \eq \psi$ in $\mathcal{P}_\mathrm{to}$, then $\varphi \eq \psi$ in $\mathsf{V}(\mathcal{P})$.
\end{lemma}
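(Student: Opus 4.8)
The plan is to follow the template of Lemma~\ref{LEMA: equiv en Gto -> equiv en G}, reducing the equivalence to the finitely subdirectly irreducible members and invoking Lemma~\ref{LEMA: aritm + fsi univ}, but with one essential modification forced by the fact that $\mathsf{V}(\mathcal{P})$ behaves differently from $\mathcal{G}$ and $\mathcal{H}$ as a quasivariety. I would fix a nontrivial $\mathbf{A} \in \mathsf{V}(\mathcal{P})$ with $\mathbf{A} \vDash \varphi$ and aim to show $\mathbf{A} \vDash \psi$ (the symmetric implication being identical). To apply Lemma~\ref{LEMA: aritm + fsi univ} I first need its hypotheses: $\mathsf{V}(\mathcal{P})$ is arithmetical by part~1 of Lemma~\ref{LEMA: props de MV perfectas}, and I must check that $\mathsf{V}(\mathcal{P})_{\mathrm{fsi}} \cup \{\text{trivial algebras}\}$ is a universal class. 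By part~2 of Lemma~\ref{LEMA: props de MV perfectas} this class is exactly $\mathcal{P}_\mathrm{to} \cup \{\text{trivial algebras}\}$, and its universality is the perfect-MV analogue of Lemma~\ref{LEMA: props de l-grupos}.(\ref{ITEM: minimal como univ}); I would obtain it from the Di Nola--Lettieri categorical equivalence \cite{DiNLet94-PerfectMVCategEquivToAbelLGroups}, under which $\mathcal{P}_\mathrm{to}$ corresponds to $\mathcal{G}_\mathrm{to}$ and which respects subalgebras and ultraproducts, so that universality transfers from $\mathcal{G}_\mathrm{to}$.

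With these in hand, Lemma~\ref{LEMA: aritm + fsi univ} reduces the goal to proving $\mathsf{H}(\mathbf{A})_\mathrm{fsi} \vDash \psi$; and since every member of $\mathsf{H}(\mathbf{A})_\mathrm{fsi}$ lies in $\mathcal{P}_\mathrm{to}$ and $\varphi \eq \psi$ there by hypothesis, it suffices to prove the key claim $\mathsf{H}(\mathbf{A})_\mathrm{fsi} \vDash \varphi$. This key claim is exactly the converse of Lemma~\ref{LEMA: aritm + fsi univ}, and here is where the argument must depart from the $\ell$-group case. In Lemma~\ref{LEMA: equiv en Gto -> equiv en G} one pushes $U(\varphi)$ from $\mathbf{A}$ onto all of $\mathsf{H}(\mathbf{A})$ using that every nontrivial $\ell$-group generates $\mathcal{G}$ as a quasivariety; the same shortcut is unavailable in $\mathsf{V}(\mathcal{P})$, since the Boolean algebra $\mathbf{2}$ is perfect yet generates only the proper subquasivariety of Boolean algebras, so $U(\varphi)$ genuinely need not survive arbitrary homomorphic images.

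Instead I would work with the Gr\"atzer--Vaggione global representation \cite{GraVag96-SheafRepVarLatExp} of $\mathbf{A}$ whose stalks lie in $\mathsf{H}(\mathbf{A})_\mathrm{fsi}$ (the same representation used inside Lemma~\ref{LEMA: aritm + fsi univ}) and transfer $\varphi$ stalkwise. The existential part $E(\varphi)$ passes to each stalk because stalks are surjective homomorphic images of $\mathbf{A}$ and $E(\varphi)$ is preserved by homomorphisms. For the uniqueness part $U(\varphi)$ I would argue by contradiction: if some stalk violated $U(\varphi)$, I would lift witnesses $\bar b \neq \bar c$ of nonuniqueness to global sections, use that equalizers of sections are clopen to find a clopen $V$ on which $\alpha(\bar x,\bar b)$ and $\alpha(\bar x,\bar c)$ hold, pick a global solution $\bar s$ of $\alpha$ for the same parameters using $\mathbf{A} \vDash E(\varphi)$, and patch $\bar b,\bar c$ with $\bar s$ across $V$ and its complement to manufacture two distinct global solutions, contradicting $\mathbf{A} \vDash U(\varphi)$. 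This is precisely the content that global representations preserve EFD-sentences in \emph{both} directions \cite{Volger79-PreservThmLimitsStrucGlobalSections}; the stalks then satisfy $\varphi$, establishing the key claim, and Lemma~\ref{LEMA: aritm + fsi univ} applied to $\psi$ finishes the proof.

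The main obstacle is exactly this $U(\varphi)$-transfer in the key claim: the one-line quasivariety argument from Lemma~\ref{LEMA: equiv en Gto -> equiv en G} collapses for perfect MV-algebras, so one must instead exploit the sheaf-theoretic clopen-equalizer and patching structure of the global representation together with the existence clause $E(\varphi)$. A secondary, more routine point to nail down is the universality of $\mathcal{P}_\mathrm{to} \cup \{\text{trivial algebras}\}$, which I expect to follow from the correspondence with $\mathcal{G}_\mathrm{to}$ rather than from a direct first-order axiomatization, since \emph{perfect} is not obviously preserved by ultraproducts on its face.
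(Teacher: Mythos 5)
Your proposal is correct in outline, and it genuinely diverges from the paper at exactly the point you flag: the paper's proof is the one sentence ``analogous to Lemma~\ref{LEMA: equiv en Gto -> equiv en G}, using Lemmas~\ref{LEMA: aritm + fsi univ} and~\ref{LEMA: props de MV perfectas}'', and the one step that is \emph{not} literally analogous is the transfer of $U(\varphi)$ from $\mathbf{A}$ to $\mathsf{H}(\mathbf{A})_{\mathrm{fsi}}$, since Lemma~\ref{LEMA: props de MV perfectas} has no analogue of Lemma~\ref{LEMA: props de l-grupos}.(\ref{ITEM: minimal como quasi}). Two remarks, though. First, the quasivariety shortcut does not collapse; it only needs a case split, which is almost certainly what the authors intend. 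If $\mathbf{A}$ is Boolean, then $\mathsf{H}(\mathbf{A})$ consists of Boolean algebras and lies in $\mathsf{Q}(\mathbf{2})\subseteq\mathsf{Q}(\mathbf{A})$. If $\mathbf{A}$ is not Boolean, then $b:=a\wedge\neg a\neq 0$ for some $a$, all coordinates of $b$ in a subdirect decomposition into members of $\mathcal{P}_{\mathrm{to}}$ lie in the radical, and cancellativity of the radicals forces the subalgebra generated by $b$ to be Chang's algebra $C=\Gamma(\mathbf{Z}\mathbin{\vec{\times}}\mathbf{Z},(1,0))$; transporting Lemma~\ref{LEMA: props de l-grupos}.(\ref{ITEM: minimal como univ}) through $\Gamma(\mathbf{Z}\mathbin{\vec{\times}}-)$ gives $\mathcal{P}_{\mathrm{to}}\subseteq\mathsf{ISP_U}(C)$, whence $\mathsf{Q}(\mathbf{A})\supseteq\mathsf{ISP}(\mathcal{P}_{\mathrm{to}})\supseteq\mathsf{V}(\mathcal{P})\supseteq\mathsf{H}(\mathbf{A})$, and the one-line argument of Lemma~\ref{LEMA: equiv en Gto -> equiv en G} resumes. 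Second, your sheaf-theoretic substitute is sound but not free of charge: openness of equalizers, a clopen neighborhood inside the set where $\alpha(\bar\sigma,\bar\tau)\band\alpha(\bar\sigma,\bar\rho)$ holds, and closure of $\mathbf{A}$ under patching over a clopen partition are properties of the particular representation of \cite{GraVag96-SheafRepVarLatExp} that you must verify (the paper cites \cite{Volger79-PreservThmLimitsStrucGlobalSections} only for the preservation direction, not reflection); granted those, your lift-and-patch argument for $U(\varphi)$ is correct, and it has the merit of working uniformly without the Boolean/non-Boolean dichotomy. Finally, your detour through the Di Nola--Lettieri equivalence for the universality of $\mathcal{P}_{\mathrm{to}}\cup\{\text{trivial algebras}\}$ is unnecessary: this class is $\mathsf{V}(\mathcal{P})_{\mathrm{to}}$, cut out of the universal class of MV-chains by the identity $2x^2=(2x)^2$, hence universal on its face.
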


\begin{proof}
The proof is analogous to the one for Lemma \ref{LEMA: equiv en Gto -> equiv en G} using Lemmas \ref{LEMA: aritm + fsi univ} and \ref{LEMA: props de MV perfectas}.
\end{proof}

Next we define a set of EFD-sentences that behave in a special way with respect to the radical. An EFD-sentence $\varphi := \forall \bar{x} \exists ! \bar{z} \, \alpha(\bar{x},\bar{z})$ in the language $\tau_\mathcal{MV}$ belongs to $\Phi_\mathrm{rad}$ if and only if for every $\mathbf{A} \in \mathcal{P}_\mathrm{to}$ and every $\bar{a} \in A^n$ the following holds:
\begin{enumerate}[$(i)$]
\item if $\bar{a} \in (\rad \mathbf{A})^n$ and $\mathbf{A} \vDash \alpha(\bar{a},\bar{b})$ for some $\bar{b} \in A^m$, then $\bar{b} \in (\rad \mathbf{A})^m$,
\item if $\bar{a} \not\in (\rad \mathbf{A})^n$ we have that $\mathbf{A} \vDash \alpha(\bar{a},\bar{0})$ and $\bar{z} = \bar{0}$ is the unique solution to $\alpha(\bar{a},\bar{z})$ in $\mathbf{A}$.
\end{enumerate}

\begin{lemma} \label{LEMA: descomposicion en basicas}
Given an EFD-sentence $\varphi$ in $\tau_\mathcal{MV}$ with a model in $\mathcal{P}$, there are EFD-sentences $\varphi_1,\ldots,\varphi_r \in \Phi_\mathrm{rad}$ such that $\varphi \eq \displaystyle \bigand_{i=1}^r \varphi_i$ in $\mathcal{P}_\mathrm{to}$.
\end{lemma}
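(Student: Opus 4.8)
The plan is to work entirely inside $\mathcal{P}_\mathrm{to}$ (which suffices by Lemma \ref{LEMA: equiv en Pto -> equiv en V(P)}) and to split the solution function of $\varphi$ according to where its arguments fall relative to the radical. Recall that in a totally ordered perfect MV-algebra $\mathbf{A}$ every element lies in $\rad \mathbf{A}$ or in $\neg \rad \mathbf{A}$, that $\rad \mathbf{A}$ is a cancellative hoop (the positive cone of the associated $\ell$-group), and that the term $\rho(x) := x \wedge \neg x$ retracts $A$ onto $\rad \mathbf{A}$, with $a = \rho(a)$ when $a \in \rad \mathbf{A}$ and $a = \neg \rho(a)$ otherwise. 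Thus each assignment to $\bar x$ is encoded by a sign pattern $\bar s \in \{0,1\}^n$ together with the radical values $\rho(a_1), \dots, \rho(a_n)$, and I would partition the universal quantifier $\forall \bar x$ of $\varphi$ into these $2^n$ cases.

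For a fixed input pattern $\bar s$, write $\bar x^{\bar s}$ for the tuple whose $i$-th entry is $x_i$ if $s_i = 0$ and $\neg x_i$ if $s_i = 1$; as $\bar x$ ranges over radical elements, $\bar x^{\bar s}$ ranges exactly over the inputs with sign pattern $\bar s$. First I would record that, since $\varphi$ has a model in $\mathcal{P}$, the analog of Corollary \ref{COR: los divisibles t.o. satisfacen todo lo posible} gives that $\varphi$ holds in every divisible member of $\mathcal{P}_\mathrm{to}$, so its solution function $F$ is total there. Substituting the sign-determined forms turns the MV-equations $\alpha(\bar x^{\bar s}, \bar z)$ into $\ell$-group/hoop equations among the radical values, so by the hoop classification (Theorem \ref{TEO: phi <-> phi_k in H}) together with the piecewise description of Lemma \ref{LEMA: forma canonica de terminos}, the sign pattern $\bar t$ of the solution is constant on each of finitely many full-dimensional cells cut out by linear inequalities in $\rho(\bar x)$, and on each such cell the radical part of the solution depends hoop-definably on $\rho(\bar x)$.

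With this in hand I would build, for each input pattern $\bar s$, a single EFD-sentence $\varphi_{\bar s}$ whose existential variables $\bar z$ stand for the radical parts of the solution: on radical inputs $\varphi_{\bar s}$ asserts $\alpha(\bar x^{\bar s}, \bar\zeta)$, where $\zeta_j$ is $z_j$ or $\neg z_j$ according to the cell-determined output sign, together with $z_j \geq 0$; and on non-radical inputs I would add guards forcing $\bar z = \bar 0$ and making $\alpha$ hold at $\bar 0$. The guards and case distinctions are equationally expressible because the order and the radical are definable in $\tau_\mathcal{MV}$. By construction each $\varphi_{\bar s}$ satisfies clauses $(i)$ and $(ii)$, hence lies in $\Phi_\mathrm{rad}$, and the finite family $\{\varphi_{\bar s}\}$ is the desired list.

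Finally I would verify $\varphi \eq \bigand_{\bar s} \varphi_{\bar s}$ in $\mathcal{P}_\mathrm{to}$: for a fixed $\mathbf{A}$, validity of $\varphi$ on inputs of pattern $\bar s$ is, after folding, equivalent to validity of $\varphi_{\bar s}$ on the corresponding radical inputs, with existence and uniqueness transferring in both directions through the correspondence $\bar b \leftrightarrow \rho(\bar b)$ on each sign cell. The hard part will be precisely the bookkeeping of the previous paragraph: one must check that the solution signs are genuinely determined by the subdivided input cells, so that a single $\varphi_{\bar s}$ per input pattern captures the correct output signs, and that the resulting sentences honestly satisfy both clauses of $\Phi_\mathrm{rad}$ while preserving uniqueness — since folding outputs by $\rho$ is two-to-one, the sign data must be retained carefully lest uniqueness be destroyed.
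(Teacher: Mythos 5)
Your overall strategy --- split the universal quantifier into the $2^n$ sign patterns of the inputs relative to the radical, fold radical inputs via sign flips, and guard non-radical inputs by forcing $\bar z = \bar 0$ --- is the same as the paper's. But there is a genuine gap at exactly the point you flag as ``the hard part'': you never establish that a \emph{single} output sign pattern works for each input pattern, and the route you sketch toward it does not close the gap. The clean argument, which the paper uses, is that $\mathbf{2}$ is both a subalgebra and a quotient of every perfect MV-algebra, hence $\mathbf{2} \vDash \varphi$ ($U(\varphi)$ passes to subalgebras, $E(\varphi)$ to quotients); consequently, if $h\colon \mathbf{A} \to \mathbf{2}$ is the quotient by $\rad\mathbf{A}$ and $\bar b$ solves $\alpha(\bar a,\bar z)$, then $h(\bar b)$ is the unique solution of $\alpha(h(\bar a),\bar z)$ in $\mathbf{2}$, so the output sign pattern $\bar e'$ is a function of the input sign pattern $\bar e = h(\bar a)$ alone. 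No subdivision into full-dimensional cells is needed, and no variation of output signs within a fixed input pattern can occur. With $\bar e'$ pinned down this way, the basic sentence for pattern $\bar e$ is simply $\forall\bar x\exists!\bar z\,\bigl[(\rho(\bar x)\band\alpha(\bar x^{\bar e},\bar z^{\bar e'}))\ \mathrm{or}\ (\widetilde\rho(\bar x)\band \bar z=\bar 0)\bigr]$, made equational via the fact that a disjunction of equations is equivalent to a single equation on MV-chains.

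Your detour through divisible algebras, Theorem \ref{TEO: phi <-> phi_k in H} and Lemma \ref{LEMA: forma canonica de terminos} is moreover circular in the paper's architecture: translating the MV-equations $\alpha(\bar x^{\bar s},\bar z)$ into hoop equations on the radical (Lemma \ref{LEMA: A sat phi <-> Rad A sat phi^h}) is only available for sentences already known to lie in $\Phi_\mathrm{rad}$ --- it needs the relevant terms to vanish at $\bar 0$ --- which is precisely what the present lemma is supposed to deliver. Two smaller points: the constraint $z_j \geq 0$ is vacuous in an MV-algebra (you presumably meant $z_j^2 = 0$; in fact no radical constraint on $\bar z$ is needed, since uniqueness follows from $U(\varphi)$ and the involutivity of $\neg$, and clause $(i)$ of $\Phi_\mathrm{rad}$ again follows from the homomorphism onto $\mathbf 2$); and the equivalence is only asserted over $\mathcal{P}_\mathrm{to}$, so Lemma \ref{LEMA: equiv en Pto -> equiv en V(P)} plays no role here.
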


\begin{proof}
We introduce some useful notation. Given $\bar{e} := (e_1,\dots,e_n) \in \{0,1\}^n$ and an $n$-tuple of variables $\bar{x} := (x_1,\ldots,x_n)$, define $\bar{x}^{\bar{e}} := (w_1,\dots,w_n)$ where $w_i := x_i$ if $e_i = 0$ and $w_i := \neg x_i$ if $e_i = 1$. We also use this notation for $n$-tuples of elements from an MV-algebra $\mathbf{A}$: if $\bar{a} \in A^n$, then $\bar{a}^{\bar{e}} := \bar{b}$ where $b_i := a_i$ if $e_i = 0$ and $b_i := \neg a_i$ if $e_i = 1$. Finally define
\begin{align*}
\rho(\bar{x}) & := x_1^2 = 0 \band \dots \band x_n^2 = 0, \\
\widetilde{\rho}(\bar{x}) & := (\neg x_1 \wedge \dots \wedge \neg x_n)^2 = 0.
\end{align*}
Note that given $\mathbf{A} \in \mathcal{P}$ and $\bar{a} \in A^n$ we have $\mathbf{A} \vDash \rho(\bar{a})$ if and only if each $a_i \in \rad \mathbf{A}$, and $\widetilde{\rho}(\bar{x})$ is equivalent over $\mathcal{P}$ to the negation of $\rho(\bar{x})$.

Assume $\varphi := \forall \bar{x} \exists! \bar{z} \, \alpha(\bar{x},\bar{z})$ has a model in $\mathcal{P}$. Since $\mathbf{2}$ is both a subalgebra and a quotient of any member of $\mathcal{P}$, we have that $\mathbf{2} \vDash \varphi$. Thus, given $\bar{e} \in \{0,1\}^n$, there is a unique $\bar{e}' \in \{0,1\}^m$ such that $\alpha(\bar{e},\bar{e}')$ holds.
Let $\eta_{\bar{e}}(\bar{x},\bar{z})$ be the formula 
$$(\rho(\bar{x}) \band \alpha(\bar{x}^{\bar{e}},\bar{z}^{\bar{e}'})) \mathrel{\mathrm{or}} (\widetilde{\rho}(\bar{x}) \band \bar{z} = \bar{0}).$$
Now observe that on any totally ordered (perfect) MV-algebra the disjunction $$x = y \mathrel{\mathrm{or}} z = w$$ is equivalent to the equation $$((x * \neg y) + (y * \neg x)) \wedge ((z * \neg w) + (w * \neg z)) = 0.$$ Thus there is a conjunction of equations $\alpha_{\bar{e}}(\bar{x},\bar{z})$ equivalent to $\eta_{\bar{e}}(\bar{x},\bar{z})$ in $\mathcal{P}_\mathrm{to}$. Finally define $$\varphi_{\bar{e}} := \forall \bar{x} \exists! \bar{z} \, \alpha_{\bar{e}}(\bar{x},\bar{z}).$$
We claim that $\varphi$ is equivalent in $\mathcal{P}_\mathrm{to}$ to the conjunction of all $\varphi_{\bar{e}}$ for $\bar{e} \in \{0,1\}^n$, and that each of these formulas belongs to $\Phi_\mathrm{rad}$.

Fix $\mathbf{A} \in \mathcal{P}_\mathrm{to}$ and assume $\mathbf{A} \vDash \varphi$. Let $\bar{e} \in \{0,1\}^n$ and $\bar{a} \in A^n$. We show that there is $\bar{b} \in A^m$ such that $\mathbf{A} \vDash \alpha_{\bar{e}}(\bar{a},\bar{b})$. If $\mathbf{A} \vDash \rho(\bar{a})$, take $\bar{c}$ as the unique $m$-tuple such that $\mathbf{A} \vDash \alpha(\bar{a}^{\bar{e}},\bar{c})$. Then $\mathbf{A} \vDash \rho(\bar{a}) \band \alpha(\bar{a}^{\bar{e}},(\bar{c}^{\bar{e}'})^{\bar{e}'})$ since $(\bar{c}^{\bar{e}'})^{\bar{e}'} = \bar{c}$, so we can take $\bar{b} := \bar{c}^{\bar{e}'}$. If $\mathbf{A} \not\vDash \rho(\bar{a})$, take $\bar{b} := \bar{0}$. It is easy to see that $\bar{b}$ is unique and so we have that $\mathbf{A} \vDash \varphi_{\bar{e}}$. 

Conversely, assume $\mathbf{A} \vDash \varphi_{\bar{e}}$ for every $\bar{e} \in \{0,1\}^n$ and fix $\bar{a} \in A^n$. There is $\bar{e} \in \{0,1\}^n$ such that $\bar{a}^{\bar{e}} \in (\rad \mathbf{A})^n$, that is, such that $\mathbf{A} \vDash \rho(\bar{a}^{\bar{e}})$. Since $\mathbf{A} \vDash \varphi_{\bar{e}}$, there is $\bar{b} \in A^m$ such that $\mathbf{A} \vDash \alpha((\bar{a}^{\bar{e}})^{\bar{e}},\bar{b}^{\bar{e}'})$, that is, $\mathbf{A} \vDash \alpha(\bar{a},\bar{b}^{\bar{e}'})$. This shows that $\mathbf{A} \vDash E(\varphi)$. To check that $\mathbf{A} \vDash U(\varphi)$, suppose $\bar{c} \in A^m$ is such that $\mathbf{A} \vDash \alpha(\bar{a},\bar{c})$. Since $\mathbf{A} \vDash U(\varphi_{\bar{e}})$ and $\mathbf{A} \vDash \alpha((\bar{a}^{\bar{e}})^{\bar{e}},(\bar{c}^{\bar{e}'})^{\bar{e}'})$, we have $\bar{b} = \bar{c}^{\bar{e}'}$, and thus $\bar{c} = \bar{b}^{\bar{e}'}$.

It remains to show that every $\varphi_{\bar{e}} \in \Phi_\mathrm{rad}$. Let $\mathbf{B} \in \mathcal{P}_\mathrm{to}$ and take $\bar{a} \in (\rad \mathbf{B})^n$, $\bar{b} \in B^m$ such that $\mathbf{B} \vDash \alpha_{\bar{e}}(\bar{a},\bar{b})$. Note that $\mathbf{B} \vDash \alpha(\bar{a}^{\bar{e}},\bar{b}^{\bar{e}'})$. If $h\colon \mathbf{B} \to \mathbf{2}$ is the homomorphism with kernel $\rad \mathbf{B}$, it follows that $\mathbf{2} \vDash \alpha(\bar{e},h(\bar{b})^{\bar{e}'})$. So, as $\mathbf{2} \vDash U(\varphi)$, we obtain $h(\bar{b})^{\bar{e}'} = \bar{e}'$. Hence $h(\bar{b}) = \bar{0}$, that is, $\bar{b} \in (\rad \mathbf{B})^n$. Condition $(ii)$ in the definition of $\Phi_\mathrm{rad}$ holds by construction of $\alpha_{\bar{e}}(\bar{x},\bar{z})$.
\end{proof}

Next we define a family of EFD-sentences that serve the same purpose as the $\delta_k$'s in our previous section. For each positive integer $k$ define the $\tau_\mathcal{MV}$-term
$$\mathsf{t}_k(z) := (kz \wedge \neg 2z^2) \vee z^k$$ and the EFD-sentence $$\varepsilon_k := \forall x \exists! z \, \mathsf{t}_k(z) = x.$$
In the following lemma we collect several properties of these terms and EFD-sentences.

\begin{lemma} \label{LEMA: props de t_k y delta_k}
Let $\mathbf{A} \in \mathcal{P}$.
\begin{enumerate}
\item For every $a \in A$ we have $$\mathsf{t}_k^\mathbf{A}(a) = \begin{cases} a^k & \text{ if } a \in \neg \rad \mathbf{A}, \\ ka & \text{ if } a \in \rad \mathbf{A}. \end{cases}$$
\item The term-function $\mathsf{t}_k^\mathbf{A}$ is a one-to-one endomorphism of $\mathbf{A}$.
\item The following are equivalent:
\begin{enumerate}[$(i)$]
\item $\mathsf{t}_k^\mathbf{A}$ is surjective,
\item $\mathbf{A} \vDash \varepsilon_k$,
\item For every $a \in \rad \mathbf{A}$, there is $b \in \rad \mathbf{A}$ such that $kb = a$.
\item For every $a \in \neg \rad \mathbf{A}$, there is $b \in \neg \rad \mathbf{A}$ such that $b^k = a$.
\end{enumerate}
\item If $\mathbf{A} \vDash \varepsilon_k$, then $[\varepsilon_k]^\mathbf{A}$ is an automorphism, which is the inverse of $\mathsf{t}_k^\mathbf{A}$.
% and for every $a \in A$ $$[\varepsilon_k]^\mathbf{A}(a) = \begin{cases} \text{unique $b$ such that } kb = a & \text{ if } a \in \rad \mathbf{A}, \\ \text{unique $b$ such that } b^k = a & \text{ if }  a \in \neg \rad \mathbf{A}. \end{cases}$$
\end{enumerate}
\end{lemma}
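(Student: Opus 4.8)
The plan is to establish the four items in order, since each depends on the previous ones, and to reduce everything to a pointwise computation using the key structural fact that $\rad \mathbf{A} = \{a : a^2 = 0\}$ together with the perfect decomposition $A = \rad \mathbf{A} \cup \neg\rad\mathbf{A}$. Throughout, I would work first on totally ordered perfect MV-algebras and then note that the subdirect representation from Lemma~\ref{LEMA: props de MV perfectas} lets me pass to arbitrary $\mathbf{A} \in \mathcal{P}$ where needed; but in fact most of the claims are about a fixed $\mathbf{A} \in \mathcal{P}$ and can be checked directly.

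For item~1, the computation is the crux of the whole lemma, so I would do it carefully. The term is $\mathsf{t}_k(z) = (kz \wedge \neg 2z^2) \vee z^k$. If $a \in \rad\mathbf{A}$, then $a^2 = 0$, so $2z^2$ evaluates to $0$ and $\neg 2z^2 = \neg 0 = 1$, the top element; hence $ka \wedge 1 = ka$ (as $ka \le 1$), while $a^k = 0$ for $k \ge 2$ (and $a^1 = a = 1a$, consistent), so the join gives $ka$. If $a \in \neg\rad\mathbf{A}$, write $a = \neg r$ with $r \in \rad\mathbf{A}$; here I expect $2z^2$ to reach the top (since $a$ is ``large''), forcing $\neg 2z^2 = 0$ and collapsing the meet to $0$, so $\mathsf{t}_k(a) = a^k$. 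The main obstacle is verifying these evaluations rigorously: I would use $\rad\mathbf{A} = \{a : a^2 = 0\}$ and the standard MV-identities, checking on totally ordered perfect algebras (so that meets and joins are determined by the order) and invoking $\mathbf{V}(\mathcal{P})_\mathrm{fsi} = \mathcal{P}_\mathrm{to}$.

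Items~2 and~3 then follow from item~1 together with the known arithmetic of the radical and its complement. For item~2, $\rad\mathbf{A}$ is closed under $+$ and $a \mapsto ka$ is injective on it because the associated $\ell$-group (equivalently the cancellative-hoop structure of $\rad\mathbf{A}$) is torsion-free; dually $a \mapsto a^k$ is injective on $\neg\rad\mathbf{A}$ since $*$ corresponds to $+$ on the $\ell$-group side. That $\mathsf{t}_k^\mathbf{A}$ is an endomorphism I would check by confirming it commutes with $\neg$ (swapping the two cases) and with $+$ (splitting into cases according to which summands lie in the radical, using that sums crossing the two halves behave predictably). For item~3, the equivalence $(i)\Leftrightarrow(ii)$ is immediate from the definition of $\varepsilon_k$ once injectivity (item~2) is in hand, since an EFD-sentence asserting existence and uniqueness of a preimage is exactly surjectivity of an already-injective map; and $(i)\Leftrightarrow(iii)\Leftrightarrow(iv)$ is just the statement that surjectivity of $\mathsf{t}_k^\mathbf{A}$ splits, via item~1, into surjectivity of multiplication-by-$k$ on $\rad\mathbf{A}$ and of $k$-th powers on $\neg\rad\mathbf{A}$, and these two are interchanged by $\neg$.

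Finally, item~4 is a formal consequence: if $\mathbf{A} \vDash \varepsilon_k$ then $\mathsf{t}_k^\mathbf{A}$ is a bijective endomorphism by items~2 and~3, hence an automorphism, and $[\varepsilon_k]^\mathbf{A}$ is by definition the function sending $x$ to the unique $z$ with $\mathsf{t}_k(z) = x$, i.e.\ precisely $(\mathsf{t}_k^\mathbf{A})^{-1}$, which is therefore also an automorphism. I expect item~1 to be the only genuinely delicate step; everything after it is bookkeeping across the two-part decomposition of the perfect algebra.
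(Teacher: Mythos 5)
Your proposal follows the paper's proof essentially step for step: item~1 via the dichotomy $2a^2=0$ for $a\in\rad\mathbf{A}$ versus $2a^2=\neg 0$ for $a\in\neg\rad\mathbf{A}$, item~2 by a case analysis on $+$ and $\neg$ over the two halves, and items~3 and~4 as bookkeeping, so the approach is sound. The one substantive computation you wave at (``sums crossing the two halves behave predictably'') is the mixed case $(a+b)^k = ka + b^k$ for $a\in\rad\mathbf{A}$, $b\in\neg\rad\mathbf{A}$, which the paper derives from $k(\neg b\vee a)=k(\neg b)\vee ka$ together with cancellativity of $(\rad\mathbf{A},+)$; also, your detour through totally ordered algebras for item~1 is harmless but unnecessary, since the dichotomy on $2a^2$ holds in every perfect MV-algebra.
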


\begin{proof}
Item 1. follows directly from the fact that $2a^2 = 0$ for $a \in \rad \mathbf{A}$ and $2a^2 = 1$ for $a \in \neg \rad \mathbf{A}$.

%To prove $(b)$, note that $\mathsf{t}_k^\mathbf{A}$ is a one-to-one endomorphism of $\mathbf{A}$ if and only if $\mathbf{A}$ satisfies the following identities and quasi-identity:
%\begin{compactitem}
%\item $\forall x,y \; \mathsf{t}_k(x+y) = \mathsf{t}_k(x) + \mathsf{t}_k(y)$,
%\item $\forall x \; \mathsf{t}_k(\neg x) = \neg \mathsf{t}_k(x)$,
%\item $\mathsf{t}_k(0) = 0$,
%\item $\forall x, y \; (\mathsf{t}_k(x) = \mathsf{t}_k(y) \Rightarrow x = y)$.
%\end{compactitem}
%
%Let $\mathbf{C}$ be the structure on the universe $$\{(0,a): a \text{ is a nonpositive integer}\} \cup \{(1,b): b \text{ is a nonnegative integer}\}$$ with operations defined by $$(i,a) + (j,b) := \begin{cases} (0,a+b) & \text{ if } i = j = 0, \\ (1,a+b) & \text{ if } i+j = 1 \text{ and } a+b \leq 0, \\ (1,0) & \text{ otherwise.} \end{cases}$$ $$\neg (i,a) = (1-i,-a)$$ $$0 = (0,0)$$
%The algebra $\mathbf{C}$ is known as Chang's algebra. The importance of this structure in our arguments comes from the fact that $\mathcal{P} \subseteq \mathsf{Q}(\mathbf{C})$ (\cite{DiNLet94-PerfectMVCategEquivToAbelLGroups} LO PRUEBA CON  $\mathsf{V}(\mathbf{C})$).
%
%Since these formulas are valid in $\mathbf{C}$ and $\mathcal{P} \subseteq \mathsf{Q}(\mathbf{C})$, they are also valid in $\mathcal{P}$.
To prove 2. we show first that $\mathsf{t}_k^\mathbf{A}$ preserves $+$, that is, $\mathsf{t}_k^\mathbf{A}(a+b) = \mathsf{t}_k^\mathbf{A}(a) + \mathsf{t}_k^\mathbf{A}(b)$. We consider three different cases and use item 1. in each case. If $a,b \in \rad \mathbf{A}$, condition $\mathsf{t}_k^\mathbf{A}(a+b) = \mathsf{t}_k^\mathbf{A}(a) + \mathsf{t}_k^\mathbf{A}(b)$ reduces to $k(a+b) = ka + kb$, which holds in any MV-algebra. If $a,b \in \neg \rad \mathbf{A}$, we prove that $(a+b)^k = a^k + b^k$. Indeed, this equation holds since $x+y = 1$ for every $x,y \in \neg \rad \mathbf{A}$. Finally, if $a \in \rad \mathbf{A}$ and $b \in \neg \rad \mathbf{A}$, we show that $(a+b)^k = ka + b^k$. Indeed, in any MV-algebra $k(\neg b \vee a) = k(\neg b) \vee ka$. Now $k(\neg b \vee a) = k(\neg(\neg \neg b + a)+a) = k(\neg(a+b)+a) = k \neg (a+b) + ka$ and $k(\neg b) \vee ka = \neg(\neg k \neg b + ka) + ka = \neg(ka + b^k) + ka$. This shows that $k \neg (a+b) + ka = \neg(ka + b^k) + ka$ and, since $(\rad \mathbf{A},+)$ is a cancellative semigroup (\cite[Lemma 3.2]{DiNLet94-PerfectMVCategEquivToAbelLGroups}), we conclude that $k \neg (a+b) = \neg (ka+b^k)$, so $(a+b)^k = ka+b^k$ as was to be proved. The fact that $\mathsf{t}_k^\mathbf{A}$ preserves $0$ and $\neg$ is straightforward. Thus, $\mathsf{t}_k^\mathbf{A}$ is an endomorphism of $\mathbf{A}$. To show that it is one-to-one, it is enough to prove that $t_k^\mathbf{A}(a) = 0$ implies $a = 0$. Indeed, if $a \in \rad \mathbf{A}$, then $\mathsf{t}_k^\mathbf{A}(a) = ka = 0$, so $a \leq ka = 0$; if $a \in \neg \rad \mathbf{A}$, then $\mathsf{t}_k^\mathbf{A}(a) = a^k \in \neg \rad \mathbf{A}$ and cannot equal $0$.

Item 3. is a direct consequence of 1. and 2., and 4. follows easily from 2. and 3.
\end{proof}

Mundici's functor $\Gamma$ \cite{CigDOtMun00-Book-AlgFoundManyValReasoning} allows us to  make explicit the connection between $\delta_k$, defined in the previous section, and $\varepsilon_k$.  Given $\mathbf{A} \in \mathcal{P}$, there is an Abelian $\ell$-group $\mathbf{G}$ such that $\mathbf{A} \cong \Gamma(\mathbf{Z} \mathbin{\vec{\times}} \mathbf{G}, (1,0))$ where $\mathbf{Z}$ is the $\ell$-groups of integers and $\vec{\times}$ is the lexicographic product (note that $\mathbf{G}^+ \cong \radn \mathbf{A}$). Now, Lemma \ref{LEMA: props de t_k y delta_k}.3.$(iii)$ says that $\mathbf{A} \vDash \varepsilon_k$ if and only if $\mathbf{G} \vDash \delta_k$.

Given a model $\mathbf{A}$ of $\varepsilon_k$ we write $\mathsf{d}_k$ for the function $[\varepsilon_k]^\mathbf{A}$. In view of Lemma \ref{LEMA: props de t_k y delta_k}.1. we see that $\mathsf{d}_k$ is the analogue of division by $k$ in $\ell$-groups. To illustrate the behaviour of these functions we look at a concrete case.

\begin{example}\label{EJ: Gamma(Z por Q)}
Let $\mathbf{Q}$ be the $\ell$-group of rational numbers and consider the perfect MV-algebra $\mathbf{D} := \Gamma(\mathbf{Z} \mathbin{\vec{\times}} \mathbf{Q}, (1,0))$. Recall that the universe of $\mathbf{D}$ is $\{(0,x): x \in Q, x \geq 0\} \cup \{(1,x): x \in Q, x \leq 0\}$. It is straightforward to check that $\mathbf{D} \vDash \varepsilon_k$ for every positive integer $k$ and that $\mathsf{t}_k^{\mathbf{D}}(i,x) = (i,kx)$ and $\mathsf{d}^{\mathbf{D}}_k(i,x) = (i,\frac{x}{k})$ for every $(i,x)$ from $\mathbf{D}$.
\end{example}

Next we show how the results for cancellative hoops translate to perfect MV-algebras. 
Given $\mathbf{A} \in \mathcal{MV}$, we define $$x \dotdiv y := \neg(\neg x + y).$$
% By CITA $(A,+,\dotdiv,0)$ is a hoop. This produces an interpretation of the variety of hoops in the variety of MV-algebras and allows us to consider any formula in the language of hoops as a formula in the language of MV-algebras.
Via this shorthand we can interpret $\tau_\mathcal{H}$-terms in MV-algebras. The radical of $\mathbf{A}$ is closed under $+$ and $\dotdiv$. Moreover, $\radn \mathbf{A} := (\rad \mathbf{A},+,\dotdiv,0)$ is a cancellative hoop (see \cite[Lemma 3.2]{DiNLet94-PerfectMVCategEquivToAbelLGroups}). Note that with these definitions it is obvious that for an $\tau_\mathcal{H}$-term $t(\bar{x})$ and $\bar{a} \in (\rad \mathbf{A})^n$ we have $t^\mathbf{A}(\bar{a}) = t^{\radn \mathbf{A}}(\bar{a})$.

Recall from Section \ref{DEF: delta_k} that $\delta_k := \forall x \exists! z \, kz = x$. 

\begin{lemma} \label{LEMA: A sat phi^p <-> Rad A sat phi}
For every positive integer $k$ and every $\mathbf{A} \in \mathcal{P}$ we have $$\mathbf{A} \vDash \varepsilon_k\Leftrightarrow \radn \mathbf{A} \vDash \delta_k.$$
\end{lemma}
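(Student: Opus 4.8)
The plan is to prove the biconditional $\mathbf{A} \vDash \varepsilon_k \Leftrightarrow \radn \mathbf{A} \vDash \delta_k$ by reducing it to the description of $\mathsf{t}_k^\mathbf{A}$ on the radical furnished by Lemma \ref{LEMA: props de t_k y delta_k}. The cleanest route is through item 3.$(iii)$ of that lemma, which already states that $\mathbf{A} \vDash \varepsilon_k$ is equivalent to the condition that every $a \in \rad \mathbf{A}$ has some $b \in \rad \mathbf{A}$ with $kb = a$. Thus the entire content of the present lemma is to recognize that this condition is precisely the assertion that $\delta_k$ holds in $\radn \mathbf{A}$.

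First I would recall that by Lemma \ref{LEMA: props de t_k y delta_k}.1., for $a \in \rad \mathbf{A}$ the term-function $\mathsf{t}_k^\mathbf{A}$ acts as multiplication by $k$, i.e.\ $\mathsf{t}_k^\mathbf{A}(a) = ka$, and that the operation $+$ restricted to $\rad \mathbf{A}$ is exactly the hoop operation on $\radn \mathbf{A}$. Hence the clause ``for every $a \in \rad \mathbf{A}$ there is $b \in \rad \mathbf{A}$ with $kb = a$'' is literally the existence part of $\delta_k$ interpreted in the cancellative hoop $\radn \mathbf{A}$. For the uniqueness part of $\delta_k$ I would note that $\radn \mathbf{A}$ is (the positive cone of) a torsion-free structure---being a cancellative hoop by \cite[Lemma 3.2]{DiNLet94-PerfectMVCategEquivToAbelLGroups}, so its associated $\ell$-group is torsion-free---whence $kb = kb'$ forces $b = b'$. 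Therefore $U(\delta_k)$ holds automatically in $\radn \mathbf{A}$, and $\radn \mathbf{A} \vDash \delta_k$ reduces exactly to the surjectivity-on-the-radical condition.

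Putting these observations together, I would argue both directions at once: $\mathbf{A} \vDash \varepsilon_k$ holds iff, by Lemma \ref{LEMA: props de t_k y delta_k}.3, condition $(iii)$ holds, iff multiplication by $k$ is a bijection on $\rad \mathbf{A}$, iff $\radn \mathbf{A} \vDash \delta_k$. The remark immediately preceding this lemma, invoking Mundici's functor $\Gamma$ and the decomposition $\mathbf{A} \cong \Gamma(\mathbf{Z} \mathbin{\vec\times} \mathbf{G},(1,0))$ with $\mathbf{G}^+ \cong \radn \mathbf{A}$, already observes that $\mathbf{A} \vDash \varepsilon_k$ iff $\mathbf{G} \vDash \delta_k$; since $\delta_k$ holds in $\mathbf{G}$ iff it holds on the positive cone $\mathbf{G}^+$ (divisibility of positive elements suffices, as every element is a difference of positive ones and the structure is torsion-free), this gives an alternative, slightly more conceptual justification that I could cite instead of recomputing.

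I do not expect any genuine obstacle here: the lemma is essentially a bookkeeping step that repackages Lemma \ref{LEMA: props de t_k y delta_k}.3.$(iii)$ in the language of the radical hoop. The only mild subtlety worth stating explicitly is the identification of the additive structure on $\rad \mathbf{A}$ with the hoop $\radn \mathbf{A}$ and the verification that uniqueness in $\delta_k$ is free by torsion-freeness; both are routine given the cited structural facts.
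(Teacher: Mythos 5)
Your proposal is correct and follows essentially the same route as the paper: the paper's proof also invokes the equivalence of items $(ii)$ and $(iii)$ in Lemma \ref{LEMA: props de t_k y delta_k}.3 to match the existence parts $E(\varepsilon_k)$ and $E(\delta_k)$, and then disposes of uniqueness by noting that $U(\varepsilon_k)$ holds throughout $\mathcal{P}$ and $U(\delta_k)$ throughout $\mathcal{H}$ (your torsion-freeness remark is exactly why the latter holds). Your explicit check that $+$ on $\rad\mathbf{A}$ agrees with the hoop operation of $\radn\mathbf{A}$ is the same bookkeeping the paper records just before the lemma.
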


\begin{proof}
The equivalence of $(ii)$ and $(iii)$ in Lemma \ref{LEMA: props de t_k y delta_k}.3. shows that $\mathbf{A} \vDash E(\varepsilon_k) \Leftrightarrow \radn \mathbf{A} \vDash E(\delta_k)$. Since $U(\varepsilon_k)$ is valid in $\mathcal{P}$ and $U(\delta_k)$ is valid in $\mathcal{H}$, the lemma now follows.
\end{proof}

\begin{lemma} \label{LEMA: A sat phi <-> Rad A sat phi^h}
For each $\varphi \in \Phi_\mathrm{rad}$ with a model in $\mathcal{P}$, there is an EFD-sentence $\varphi^\mathrm{H}$ in the language $\tau_\mathcal{H}$ such that for every $\mathbf{A} \in \mathcal{P}_\mathrm{to}$ $$\mathbf{A} \vDash \varphi \Leftrightarrow \radn \mathbf{A} \vDash \varphi^\mathrm{H}.$$
\end{lemma}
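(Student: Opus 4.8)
The plan is to translate $\varphi$ equation-by-equation, exploiting the fact that on $\mathcal{P}_\mathrm{to}$ every $\tau_\mathcal{MV}$-term, when its variables range over the radical, takes values lying \emph{entirely} in $\rad \mathbf{A}$ or \emph{entirely} in $\neg\rad \mathbf{A}$. To make this precise, assign to each $\tau_\mathcal{MV}$-term $s$ a \emph{parity} $\pi(s) := s^{\mathbf{2}}(\bar{0}) \in \{0,1\}$, the value of $s$ at the all-zero tuple in the two-element algebra $\mathbf{2}$; recursively, $\pi(0)=0$, $\pi(x_i)=0$, $\pi(\neg s)=1-\pi(s)$ and $\pi(s+t)=\pi(s)\vee\pi(t)$. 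First I would observe that if $h\colon \mathbf{A} \to \mathbf{2}$ is the homomorphism with kernel $\rad \mathbf{A}$, then for radical $\bar{a}$ we have $h(s^\mathbf{A}(\bar{a})) = s^{\mathbf{2}}(\bar{0}) = \pi(s)$; hence $s^\mathbf{A}(\bar{a}) \in \rad \mathbf{A}$ exactly when $\pi(s)=0$, and $s^\mathbf{A}(\bar{a})\in\neg\rad\mathbf{A}$ exactly when $\pi(s)=1$.

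The technical core is a term-translation lemma: for every $\tau_\mathcal{MV}$-term $s$ there is a $\tau_\mathcal{H}$-term $s^\flat$ such that, for all $\mathbf{A} \in \mathcal{P}_\mathrm{to}$ and all radical $\bar{a}$, $s^\mathbf{A}(\bar{a})=(s^\flat)^{\radn\mathbf{A}}(\bar{a})$ when $\pi(s)=0$ and $(\neg s)^\mathbf{A}(\bar{a})=(s^\flat)^{\radn\mathbf{A}}(\bar{a})$ when $\pi(s)=1$. I would prove this by induction on $s$, reading $\dotdiv$ in $\mathbf{A}$ via the shorthand $x\dotdiv y=\neg(\neg x+y)$ and invoking the already-noted fact that $\tau_\mathcal{H}$-terms evaluate identically in $\mathbf{A}$ and in $\radn\mathbf{A}$ on radical inputs. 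The cases $0$ and $x_i$ are immediate, and for $\neg s$ one simply swaps the two clauses. For $s=t_1+t_2$ the parities split into three subcases: if both summands are radical the MV-sum coincides with the hoop sum, giving $s^\flat=t_1^\flat+t_2^\flat$; if exactly one summand, say $t_1$, is co-radical, then $\neg(t_1+t_2)=(\neg t_1)\dotdiv t_2$ is radical, giving $s^\flat=t_1^\flat\dotdiv t_2^\flat$; and the delicate subcase is when both summands are co-radical, where $t_1+t_2$ collapses to the top element $\neg 0$ (two co-radical elements sum to $\neg 0$), so that $\neg(t_1+t_2)=0$ and $s^\flat=0$. This last collapse, which relies on total order and on the truncated MV-addition, is the main obstacle and the step to justify with care.

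With the term lemma in hand, I would first argue that no equation of $\alpha$ can have mismatched parity. Since $\varphi$ has a model in $\mathcal{P}$, exactly as in the proof of Lemma \ref{LEMA: descomposicion en basicas} we get $\mathbf{2} \vDash \varphi$; applying condition $(i)$ in the definition of $\Phi_\mathrm{rad}$ to $\mathbf{2}$ with input $\bar{0}\in(\rad\mathbf{2})^n$ forces the unique solution to be $\bar{0}$, so $\mathbf{2} \vDash \alpha(\bar{0},\bar{0})$, whence $\pi(s_i)=\pi(t_i)$ for every equation $s_i=t_i$ of $\alpha$. Writing $\beta:=\bigwedge_i (s_i^\flat=t_i^\flat)$ and $\varphi^\mathrm{H}:=\forall\bar{x}\,\exists!\bar{z}\,\beta(\bar{x},\bar{z})$ (an EFD-sentence in $\tau_\mathcal{H}$ by construction), the term lemma together with the parity match — distinguishing the $\pi=0$ and $\pi=1$ subcases and using $s_i=t_i\Leftrightarrow\neg s_i=\neg t_i$ — yields, for radical $\bar{a},\bar{b}$, the equation-level equivalence $\mathbf{A}\vDash\alpha(\bar{a},\bar{b})\iff\radn\mathbf{A}\vDash\beta(\bar{a},\bar{b})$.

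Finally I would assemble the biconditional of the statement. Using that $\varphi\in\Phi_\mathrm{rad}$: condition $(ii)$ shows that every non-radical input $\bar{a}$ already has $\bar{0}$ as its unique solution, so such inputs impose no constraint on whether $\varphi$ holds; and condition $(i)$ shows that for radical $\bar{a}$ every solution of $\alpha(\bar{a},\bar{z})$ lies in $(\rad\mathbf{A})^m$, so existence and uniqueness over $A^m$ coincide with existence and uniqueness over $(\rad\mathbf{A})^m$. Combining this with the equation-level equivalence above, $\mathbf{A}\vDash\varphi$ holds iff every $\bar{a}\in(\rad\mathbf{A})^n$ admits a unique $\bar{b}\in(\rad\mathbf{A})^m$ with $\radn\mathbf{A}\vDash\beta(\bar{a},\bar{b})$, which is precisely $\radn\mathbf{A}\vDash\varphi^\mathrm{H}$. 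This completes the argument.
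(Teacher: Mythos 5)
Your proof is correct, and its overall skeleton matches the paper's (build $\varphi^\mathrm{H}$ by replacing each MV-equation with a hoop equation, then use conditions $(i)$ and $(ii)$ of $\Phi_\mathrm{rad}$ to reduce the biconditional to radical tuples), but the key technical step is handled quite differently. The paper normalizes each equation of $\alpha$ to the form $t(\bar{x},\bar{z})=0$, observes that $\mathbf{2}\vDash t(\bar{0},\bar{0})=0$, and then invokes Aglian\`o--Panti's Theorem 3.1 to obtain a $\tau_\mathcal{H}$-term $t'$ with $\mathcal{MV}\vDash t=t'$ --- a global term identity over all MV-algebras. You instead keep the equations in the form $s_i=t_i$, introduce the parity $\pi(s):=s^{\mathbf{2}}(\bar{0})$, and prove by induction a local translation lemma: $s^\flat$ agrees with $s$ (or with $\neg s$, according to parity) only on radical tuples of totally ordered perfect algebras, with the three-way case split for $+$ and the collapse $t_1+t_2=\neg 0$ when both summands are co-radical (which, as you note, uses total order via $\neg t_1 * \neg t_2 \leq (\neg t_j)^2 = 0$ for the larger $\neg t_j$). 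Your parity-matching argument $\pi(s_i)=\pi(t_i)$, derived from $\mathbf{2}\vDash\alpha(\bar{0},\bar{0})$, correctly ensures the two sides of each equation are translated compatibly. What your route buys is self-containment --- no appeal to the Aglian\`o--Panti representation theorem --- at the cost of a longer induction and a weaker conclusion (agreement only on the radical of members of $\mathcal{P}_\mathrm{to}$ rather than identity over $\mathcal{MV}$), which is nevertheless exactly what the lemma requires. The final assembly of the biconditional from the $\Phi_\mathrm{rad}$ conditions is the same as in the paper.
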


\begin{proof}
Let $\varphi := \forall \bar{x} \exists ! \bar{z} \, \alpha(\bar{x},\bar{z})$ in $\Phi_\mathrm{rad}$. We can assume $\alpha(\bar{x},\bar{z})$ is a conjunction of equations of the form $t(\bar{x},\bar{z}) =  0$, where $t$ is an $\tau_\mathcal{MV}$-term. As in the proofs of the previous lemmas, since $\varphi$ has a model in $\mathcal{P}$, we know that $\mathbf{2} \vDash \varphi$. Moreover, since $\varphi \in \Phi_\mathrm{rad}$, we have $\mathbf{2} \vDash t(\bar{0},\bar{0}) = 0$. Thus, by \cite[Theorem 3.1]{AglPan02-GeomMethWajsHoops}, there is an $\tau_\mathcal{H}$-term $t'(\bar{x},\bar{z})$ such that $\mathcal{MV} \vDash \forall \bar{x} \bar{z} \, t(\bar{x},\bar{z}) = t'(\bar{x},\bar{z})$. Let $\beta(\bar{x},\bar{z})$ be the result of replacing each $\tau_\mathcal{MV}$-term in $\alpha(\bar{x},\bar{z})$ by an equivalent $\tau_\mathcal{H}$-term; define $\varphi^\mathrm{H} := \forall \bar{x} \exists! \bar{z} \, \beta(\bar{x},\bar{z})$.

Fix $\mathbf{A} \in \mathcal{P}_\mathrm{to}$. Suppose $\mathbf{A} \vDash \varphi$ and take $\bar{a} \in (\rad \mathbf{A})^n$. Since $\varphi \in \Phi_\mathrm{rad}$, there is $\bar{b} \in (\rad \mathbf{A})^m$ such that $\mathbf{A} \vDash \alpha(\bar{a},\bar{b})$, and thus $\radn \mathbf{A} \vDash \beta(\bar{a},\bar{b})$. Furthermore, if $\bar{c} \in (\rad \mathbf{A})^m$ is such that $\radn \mathbf{A} \vDash \beta(\bar{a},\bar{c})$, then $\mathbf{A} \vDash \alpha(\bar{a},\bar{c})$, and it follows that $\bar{c} = \bar{b}$. This completes the proof of $\radn \mathbf{A} \vDash \varphi^\mathrm{H}$.

For the other direction assume $\radn \mathbf{A} \vDash \varphi^\mathrm{H}$ and let $\bar{a} \in A^n$. If $\bar{a} \notin (\rad \mathbf{A})^n$, the definition of $\Phi_\mathrm{rad}$ implies that there is a unique $\bar{b}$ such that $\mathbf{A} \vDash \alpha(\bar{a},\bar{b})$, namely $\bar{b} = \bar{0}$. To conclude, suppose $\bar{a} \in (\rad \mathbf{A})^n$. Since $\radn \mathbf{A} \vDash \varphi^\mathrm{H}$, there is $\bar{b} \in (\rad \mathbf{A})^m$ such that $\radn \mathbf{A} \vDash \beta(\bar{a},\bar{b})$, and thus $\mathbf{A} \vDash \alpha(\bar{a},\bar{b})$. If $\bar{c} \in A^m$ is such that $\mathbf{A} \vDash \alpha(\bar{a},\bar{c})$, then as $\varphi \in \Phi_\mathrm{rad}$ we know that $\bar{c} \in (\rad \mathbf{A})^m$. Since $\radn \mathbf{A} \vDash U(\varphi^\mathrm{H})$, it follows that $\bar{c} = \bar{b}$.
\end{proof}

Recall that the identity $\forall x \, 2x = x$ axiomatizes the class of Boolean algebras relative to the class of MV-algebras. Thus the only model of this identity in $\mathcal{P}$ is the two-element MV-algebra.

\begin{lemma} \label{LEMA: basicas equiv delta_k o boole}
Given $\varphi \in \Phi_\mathrm{rad}$ with a model in $\mathcal{P}$ either $\varphi \eq \forall x \, 2x = x$ in $\mathcal{P}_\mathrm{to}$ or there is a positive integer $k$ such that $\varphi \eq \varepsilon_k$ in $\mathcal{P}_\mathrm{to}$.
\end{lemma}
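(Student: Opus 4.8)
The plan is to transport the dichotomy already established for cancellative hoops (Theorem~\ref{TEO: phi <-> phi_k in H}) back to perfect MV-algebras through the radical. The entire argument takes place in $\mathcal{P}_\mathrm{to}$, where the assignment $\mathbf{A} \mapsto \radn \mathbf{A}$ sends a totally ordered perfect MV-algebra to a (totally ordered) cancellative hoop, and the three translation lemmas let me move freely between the two sides. First I would apply Lemma~\ref{LEMA: A sat phi <-> Rad A sat phi^h} to the given $\varphi \in \Phi_\mathrm{rad}$, which has a model in $\mathcal{P}$ by hypothesis, obtaining an EFD-sentence $\varphi^\mathrm{H}$ in $\tau_\mathcal{H}$ with the property that $\mathbf{A} \vDash \varphi \Leftrightarrow \radn \mathbf{A} \vDash \varphi^\mathrm{H}$ for every $\mathbf{A} \in \mathcal{P}_\mathrm{to}$. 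The proof then splits according to whether $\varphi^\mathrm{H}$ has a nontrivial model in $\mathcal{H}$.

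Suppose first that $\varphi^\mathrm{H}$ does have a nontrivial model in $\mathcal{H}$. Then Theorem~\ref{TEO: phi <-> phi_k in H} provides a positive integer $k$ with $\varphi^\mathrm{H} \eq \delta_k$ in $\mathcal{H}$. Since $\radn \mathbf{A} \in \mathcal{H}$ for every $\mathbf{A} \in \mathcal{P}_\mathrm{to}$, I can chain the equivalences
$$\mathbf{A} \vDash \varphi \ \Leftrightarrow\ \radn \mathbf{A} \vDash \varphi^\mathrm{H} \ \Leftrightarrow\ \radn \mathbf{A} \vDash \delta_k \ \Leftrightarrow\ \mathbf{A} \vDash \varepsilon_k,$$
where the final step is Lemma~\ref{LEMA: A sat phi^p <-> Rad A sat phi}. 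This yields $\varphi \eq \varepsilon_k$ in $\mathcal{P}_\mathrm{to}$, the second alternative of the lemma.

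In the remaining case $\varphi^\mathrm{H}$ has no nontrivial model in $\mathcal{H}$; as every EFD-sentence holds in the trivial algebra, this means that a cancellative hoop satisfies $\varphi^\mathrm{H}$ exactly when it is trivial. For $\mathbf{A} \in \mathcal{P}_\mathrm{to}$ the hoop $\radn \mathbf{A}$ is trivial precisely when $\rad \mathbf{A} = \{0\}$, and then $A = \rad \mathbf{A} \cup \neg \rad \mathbf{A} = \{0, \neg 0\}$, i.e.\ $\mathbf{A} \cong \mathbf{2}$. Recalling from the remark preceding the lemma that the only model of $\forall x\, 2x = x$ in $\mathcal{P}$ is $\mathbf{2}$, I obtain $\mathbf{A} \vDash \varphi \Leftrightarrow \mathbf{A} \cong \mathbf{2} \Leftrightarrow \mathbf{A} \vDash \forall x\, 2x = x$, so $\varphi \eq \forall x\, 2x = x$ in $\mathcal{P}_\mathrm{to}$, which is the first alternative.

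The computational content of this proof is almost entirely absorbed into the three translation lemmas, so there is no real calculational obstacle; the only point demanding care is the bookkeeping in the Boolean case. One must verify that ``$\varphi^\mathrm{H}$ has no nontrivial model in $\mathcal{H}$'' genuinely forces the first alternative, and this hinges on the clean identification of a trivial radical with the algebra $\mathbf{2}$ for perfect MV-algebras together with the fact that $\forall x\, 2x = x$ cuts out exactly $\mathbf{2}$ inside $\mathcal{P}$. Once these identifications are in place, the case split exhausts the possibilities and matches the two alternatives of the statement exactly.
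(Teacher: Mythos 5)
Your proof is correct and follows essentially the same route as the paper's: both transport the question to $\radn\mathbf{A}$ via Lemma \ref{LEMA: A sat phi <-> Rad A sat phi^h}, invoke Theorem \ref{TEO: phi <-> phi_k in H} in the non-Boolean case, and return via Lemma \ref{LEMA: A sat phi^p <-> Rad A sat phi}. The only cosmetic difference is that you split cases on whether $\varphi^\mathrm{H}$ has a nontrivial model in $\mathcal{H}$, while the paper splits on whether $\varphi$ has a model in $\mathcal{P}_\mathrm{to}$ not isomorphic to $\mathbf{2}$; these conditions coincide under the translation.
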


\begin{proof}
If $\mathbf{2}$ is the only model of $\varphi$ in $\mathcal{P}_\mathrm{to}$, then $\varphi \eq \forall x \, 2x = x$ in $\mathcal{P}_\mathrm{to}$. Assume $\varphi$ has a model $\mathbf{A} \in \mathcal{P}_\mathrm{to}$ non-isomorphic with $\mathbf{2}$. Then $\varphi^\mathrm{H}$ has $\radn \mathbf{A}$ as a nontrivial model (see Lemma \ref{LEMA: A sat phi <-> Rad A sat phi^h}). By Theorem \ref{TEO: phi <-> phi_k in H}, there is a positive integer $k$ such that $\varphi^\mathrm{H} \eq \delta_k$ in $\mathcal{H}$. From these facts and Lemma \ref{LEMA: A sat phi^p <-> Rad A sat phi}, for every $\mathbf{B} \in \mathcal{P}_\mathrm{to}$ we have
\begin{align*}
\mathbf{B} \vDash \varphi & \Leftrightarrow \radn \mathbf{B} \vDash \varphi^\mathrm{H} \\
& \Leftrightarrow \radn \mathbf{B} \vDash \delta_k \\
& \Leftrightarrow \mathbf{B} \vDash \varepsilon_k.
\end{align*}
\end{proof}

We are now in the position to prove a characterization of EFD-sentences for the variety $\mathsf{V}(\mathcal{P})$.

\begin{theorem} \label{EFDs en perfectas}
For every EFD-sentence $\varphi$ in $\tau_\mathcal{MV}$ with a  model in $\mathcal{P}$ either $\varphi \eq \forall x \, 2x = x$ in $\mathsf{V}(\mathcal{P})$ or there is a positive integer $k$ such that $\varphi \eq \varepsilon_k$ in $\mathsf{V}(\mathcal{P})$.
\end{theorem}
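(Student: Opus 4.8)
The plan is to assemble the theorem from the two decomposition lemmas together with the radical transfer to cancellative hoops; essentially all the substantive work is already packaged in the preceding lemmas, and the theorem is their amalgamation. First I would apply Lemma \ref{LEMA: descomposicion en basicas} to the given $\varphi$, obtaining EFD-sentences $\varphi_1,\dots,\varphi_r \in \Phi_\mathrm{rad}$ with $\varphi \eq \bigand_{i=1}^r \varphi_i$ in $\mathcal{P}_\mathrm{to}$. Since $\mathbf{2}$ is both a subalgebra and a quotient of any perfect MV-algebra and $\varphi$ has a model in $\mathcal{P}$, we have $\mathbf{2} \vDash \varphi$; as $\mathbf{2} \in \mathcal{P}_\mathrm{to}$ and the equivalence holds in $\mathcal{P}_\mathrm{to}$, each $\varphi_i$ has $\mathbf{2}$ as a model. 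Hence Lemma \ref{LEMA: basicas equiv delta_k o boole} applies to every $\varphi_i$: each is equivalent in $\mathcal{P}_\mathrm{to}$ either to $\forall x\, 2x = x$ or to some $\varepsilon_{k_i}$. Thus, up to reindexing, $\varphi$ is equivalent in $\mathcal{P}_\mathrm{to}$ to a conjunction of copies of $\forall x\, 2x = x$ together with finitely many sentences $\varepsilon_{k_1},\dots,\varepsilon_{k_s}$.

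Next I would split into two cases. If at least one conjunct is $\forall x\, 2x = x$, then since $\mathbf{2}$ is the only totally ordered perfect MV-algebra satisfying $\forall x\, 2x = x$, and since $\mathbf{2} \vDash \varepsilon_k$ for every $k$ by Lemma \ref{LEMA: props de t_k y delta_k} (its radical is $\{0\}$ and $1^k = 1$), the whole conjunction has exactly the models of $\forall x\, 2x = x$ in $\mathcal{P}_\mathrm{to}$; hence $\varphi \eq \forall x\, 2x = x$ in $\mathcal{P}_\mathrm{to}$. Otherwise every conjunct is some $\varepsilon_{k_i}$, and I would collapse $\bigand_{i=1}^s \varepsilon_{k_i}$ into a single $\varepsilon_k$. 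Here I would use the radical transfer of Lemma \ref{LEMA: A sat phi^p <-> Rad A sat phi}: for $\mathbf{A} \in \mathcal{P}_\mathrm{to}$ one has $\mathbf{A} \vDash \varepsilon_{k_i}$ if and only if $\radn \mathbf{A} \vDash \delta_{k_i}$. Setting $k := k_1 \cdots k_s$, divisibility by $k$ in a cancellative hoop (equivalently in its associated $\ell$-group) is the same as divisibility by each $k_i$, so $\bigand_{i=1}^s \delta_{k_i} \eq \delta_k$ there; this is exactly the collapsing already carried out for $\ell$-groups in the proof of Theorem \ref{TEO: EFD in l-groups}. Transferring back across the radical via Lemma \ref{LEMA: A sat phi^p <-> Rad A sat phi} gives $\mathbf{A} \vDash \bigand_{i=1}^s \varepsilon_{k_i}$ if and only if $\mathbf{A} \vDash \varepsilon_k$, that is, $\varphi \eq \varepsilon_k$ in $\mathcal{P}_\mathrm{to}$.

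In either case the resulting equivalence holds in $\mathcal{P}_\mathrm{to}$, and a single application of Lemma \ref{LEMA: equiv en Pto -> equiv en V(P)} promotes it to $\mathsf{V}(\mathcal{P})$, yielding the stated dichotomy. Since the hard analytic content is inherited from Theorem \ref{TEO: phi <-> phi_k in H} and the $\Phi_\mathrm{rad}$-decomposition, I expect no single step to be a serious obstacle; the points requiring the most care are purely organizational — namely confirming that each basic piece $\varphi_i$ genuinely has a model in $\mathcal{P}$ so that Lemma \ref{LEMA: basicas equiv delta_k o boole} is applicable, and verifying that the presence of even one Boolean conjunct $\forall x\, 2x = x$ suppresses all the $\varepsilon_{k_i}$'s. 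The only place calling for a genuine (though short) computation is the $\delta_{k_i}$-collapsing step, and that argument is taken over essentially verbatim from the $\ell$-group setting.
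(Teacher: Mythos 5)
Your proof is correct and follows essentially the same route as the paper: decompose via Lemma \ref{LEMA: descomposicion en basicas}, classify each basic conjunct via Lemma \ref{LEMA: basicas equiv delta_k o boole}, collapse the $\varepsilon_{k_i}$'s into a single $\varepsilon_k$ with $k = \prod k_i$, and lift from $\mathcal{P}_\mathrm{to}$ to $\mathsf{V}(\mathcal{P})$ by Lemma \ref{LEMA: equiv en Pto -> equiv en V(P)}; the only differences are organizational (you split cases on the form of the conjuncts rather than on whether $\varphi$ has a model non-isomorphic to $\mathbf{2}$, and you justify the collapsing step through the radical transfer, which the paper leaves implicit). The one detail worth adding is that $\forall x \, 2x = x$ must first be rewritten as the EFD-sentence $\forall x \exists! z \, (2x = x) \band (z = x)$ before Lemma \ref{LEMA: equiv en Pto -> equiv en V(P)} can be applied to it, as the paper notes.
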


\begin{proof}
By Lemma \ref{LEMA: descomposicion en basicas}, there are basic EFD-sentences $\varphi_1, \dots, \varphi_r$ such that $\varphi \eq \displaystyle\bigand_{i=1}^r \varphi_i$ in $\mathcal{P}_\mathrm{to}$.

First suppose that $\varphi$ has a model in $\mathcal{P}$ non-isomorphic with $\mathbf{2}$; then so does each $\varphi_i$. By Lemma \ref{LEMA: basicas equiv delta_k o boole}, there are positive integers $k_1, \ldots, k_r$ such that $\varphi_i \eq \varepsilon_{k_i}$ in $\mathcal{P}_\mathrm{to}$ for every $i \in \{1,\ldots,r\}$. Thus $\varphi \eq \displaystyle\bigand_{i=1}^r \varepsilon_{k_i}$ in $\mathcal{P}_\mathrm{to}$. Now take $k := k_1\ldots k_r$ and note that $\displaystyle \bigand_{i=1}^r \varepsilon_{k_i} \eq \varepsilon_k$ in $\mathcal{P}_\mathrm{to}$. Hence $\varphi \eq \varepsilon_k$ in $\mathcal{P}_\mathrm{to}$. Finally, by Lemma \ref{LEMA: equiv en Pto -> equiv en V(P)}, we get that $\varphi \eq \varepsilon_k$ in $\mathsf{V}(\mathcal{P})$.

Now, if $\mathbf{2}$ is the only model of $\varphi$ in $\mathcal{P}$, then $\varphi \eq \forall x \, 2x = x$ in $\mathcal{P}_\mathrm{to}$. Note that $\forall x \, 2x = x$ is equivalent to the EFD-sentence $\forall x \exists! z \, (2x = x) \band (z = x)$, so we can apply Lemma \ref{LEMA: equiv en Pto -> equiv en V(P)} to conclude that $\varphi \eq \forall x \, 2x = x$ in $\mathsf{V}(\mathcal{P})$.
\end{proof}

As in the case of $\ell$-groups, the characterization of EFD-sentences easily provides a description of the AE-classes. Given a set $S$ of prime numbers, let $\Sigma_S := \{\varepsilon_p: p \in S\}$. 

\begin{theorem} \label{TEO: clases AE de perfectas}
Every set of EFD-sentences in $\tau_\mathcal{MV}$ is equivalent over $\mathsf{V}(\mathcal{P})$ to exactly one of the following:
\begin{itemize}
\item $\{\forall xy \, x = y\}$,
\item $\{\forall x \, 2x = x\}$,
\item $\Sigma_S$ for some set $S$ of prime numbers. \end{itemize}
Furthermore, the map $S \mapsto \Sigma_S$ is one-to-one, and thus, the lattice of AE-subclasses of $\mathsf{V}(\mathcal{P})$ is isomorphic with $\mathbf{2} \oplus \mathbf{2}^\omega$.
\end{theorem}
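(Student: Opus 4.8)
The plan is to bootstrap from the single-sentence classification in Theorem~\ref{EFDs en perfectas} to an arbitrary set $\Sigma$ of EFD-sentences, throughout reducing questions over $\mathsf{V}(\mathcal{P})$ to questions over $\mathcal{P}_\mathrm{to}$ via Lemma~\ref{LEMA: equiv en Pto -> equiv en V(P)}. The natural case split is according to whether the two-element algebra $\mathbf{2}$ satisfies $\Sigma$. Since $\mathbf{2}$ is simultaneously a subalgebra and a homomorphic image of every nontrivial member of $\mathsf{V}(\mathcal{P})$ (every nontrivial MV-algebra contains $\{0,\neg 0\}\cong\mathbf{2}$, and by Lemma~\ref{LEMA: props de MV perfectas} every nontrivial algebra has a totally ordered perfect fsi quotient, which maps onto $\mathbf{2}$ via its radical), I would first record the observation that an EFD-sentence $\varphi$ has a model in $\mathcal{P}$ if and only if $\mathbf{2}\vDash\varphi$: from a perfect model this uses that $\mathsf{H}$ preserves $E(\varphi)$ and $\mathsf{S}$ preserves $U(\varphi)$.

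First case: some $\varphi\in\Sigma$ with $\mathbf{2}\not\vDash\varphi$. Here I claim $\mathsf{V}(\mathcal{P})\cap\Mod(\Sigma)$ consists of trivial algebras only, so that $\Sigma\eq\{\forall xy\, x=y\}$ over $\mathsf{V}(\mathcal{P})$. Indeed, were $\mathbf{A}$ a nontrivial model of $\varphi$ in $\mathsf{V}(\mathcal{P})$, the sub/quotient remark would force $\mathbf{2}\vDash E(\varphi)$ (via the quotient) and $\mathbf{2}\vDash U(\varphi)$ (via the subalgebra), hence $\mathbf{2}\vDash\varphi$, a contradiction. Second case: $\mathbf{2}\vDash\Sigma$. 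Then each $\varphi\in\Sigma$ has a model in $\mathcal{P}$, so by Theorem~\ref{EFDs en perfectas} it is equivalent over $\mathsf{V}(\mathcal{P})$ either to $\forall x\, 2x=x$ or to some $\varepsilon_{k_\varphi}$. If at least one $\varphi$ is of the first kind, I would show $\Sigma\eq\{\forall x\, 2x=x\}$: every model of $2x=x$ in $\mathsf{V}(\mathcal{P})$ is a Boolean algebra, on which $\mathsf{t}_k$ is the identity, so every $\varepsilon_k$ holds automatically and the remaining constraints are vacuous. Otherwise every $\varphi$ is equivalent to $\varepsilon_{k_\varphi}$, and I would invoke the perfect-MV analogue of the prime-factorisation fact used for $\ell$-groups, namely $\varepsilon_k\eq\bigand_{p\mid k}\varepsilon_p$ over $\mathcal{P}_\mathrm{to}$ (hence over $\mathsf{V}(\mathcal{P})$), which follows from Lemma~\ref{LEMA: A sat phi^p <-> Rad A sat phi} together with the corresponding statement for $\delta_k$ in $\mathcal{G}$ (Theorem~\ref{TEO: clases AE de l-grupos}). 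Collecting all prime factors yields a set $S$ of primes with $\Sigma\eq\Sigma_S$.

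It remains to show the three forms are pairwise inequivalent and that $S\mapsto\Sigma_S$ is injective, and then to read off the lattice. The delicate step, and the one requiring genuinely new input, is this separation: I would exhibit totally ordered perfect MV-algebras of prescribed divisibility type. By Mundici's functor, for an Abelian $\ell$-group $\mathbf{G}$ the algebra $\Gamma(\mathbf{Z}\mathbin{\vec{\times}}\mathbf{G},(1,0))$ has radical cone $\mathbf{G}^+$, so by Lemma~\ref{LEMA: A sat phi^p <-> Rad A sat phi} it satisfies $\varepsilon_p$ exactly when $\mathbf{G}$ is $p$-divisible. Taking $\mathbf{G}=\mathbb{Z}[1/q:q\neq p]$ (divisible by every prime but $p$) separates any two $\Sigma_S,\Sigma_{S'}$ with $p\in S\setminus S'$, giving injectivity; taking $\mathbf{G}=\mathbf{Q}$ as in Example~\ref{EJ: Gamma(Z por Q)} produces a non-Boolean algebra lying in every $\Sigma_S$-class, which separates the Boolean form from all the $\Sigma_S$, while the Boolean class is itself nontrivial, separating it from $\forall xy\, x=y$.

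Finally, writing $T$, $B$, $C_S$ for the trivial, Boolean, and $\Sigma_S$ classes respectively, the inclusions $T\subsetneq B\subsetneq C_S$ (Boolean algebras satisfy every $\varepsilon_p$, yet $C_S$ contains the non-Boolean algebra above) together with $C_S\subseteq C_{S'}\iff S'\subseteq S$ show that $S\mapsto C_S$ is an order-reversing bijection onto the family of $C_S$'s. Since the powerset lattice of the primes is self-dual, this family is isomorphic to $\mathbf{2}^\omega$, and the two-element chain $T<B$ sits strictly below all of it, yielding the ordinal sum $\mathbf{2}\oplus\mathbf{2}^\omega$. I expect the separation of the $C_S$ by explicit perfect MV-algebras to be the main obstacle, as everything else is assembly from the results already in place.
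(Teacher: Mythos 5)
Your proposal is correct and takes essentially the same route as the paper: reduce an arbitrary $\Sigma$ to the single-sentence classification of Theorem \ref{EFDs en perfectas} via the prime factorization $\varepsilon_k \eq \{\varepsilon_p : p \mid k\}$, and then separate the resulting classes with perfect MV-algebras $\Gamma(\mathbf{Z} \mathbin{\vec{\times}} \mathbf{G}, (1,0))$ of prescribed divisibility type. The only (immaterial) difference is the choice of separating witness --- the paper takes $\mathbf{G}$ to be the rationals with denominators built from primes in $S$, where you take $\mathbb{Z}[1/q : q \neq p]$ --- and your write-up supplies the case analysis and the lattice-isomorphism verification that the paper leaves implicit.
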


\begin{proof}
From Lemma \ref{LEMA: props de t_k y delta_k}.3. it is easy to see that $\varepsilon_k$ is equivalent over $\mathsf{V}(\mathcal{P})$ to $\{\varepsilon_p: p \text{ prime divisor of } k\}$. This fact together with Theorem \ref{EFDs en perfectas} proves the first part the the theorem.

Now, given a set $S$ of positive primes, if we consider the $\ell$-group $\mathbf{G}$ of rational numbers whose denominators are products of primes in $S$, then for every prime $p$ we have that the algebra $\Gamma(\mathbf{Z} \mathbin{\vec{\times}} \mathbf{G}, (1,0))$ satisfies $\varepsilon_p$ if and only if $p \in S$. Finally, observe that Boolean algebras trivially satisfy $\varepsilon_p$ for every prime number $p$. This proves the furthermore part.
\end{proof}

\subsection{The algebraic expansions of $L_\mathcal{P}$}

The Logic $L_\mathcal{P}$ of Perfect MV-Algebras \cite{DiNLet94-PerfectMVCategEquivToAbelLGroups} is the extension of \L ukasiewicz Logic by the axiom $2x^2 \mathbin{\leftrightarrow} (2x)^2$ (recall that $x \mathbin{\leftrightarrow} y := (\neg x + y) \wedge (\neg y + x)$). As the name suggests, the equivalent algebraic semantics of $L_\mathcal{P}$ is the variety $\mathsf{V}(\mathcal{P})$.

Given a prime number $p$, the algebraic expansion of $L_\mathcal{P}$ corresponding to the EFD-sentence $\varepsilon_p$ is, by definition, obtained from $L_\mathcal{P}$ by adding the axiom:
\begin{equation}
((p\mathsf{d}_p(x) \wedge \neg 2\mathsf{d}_p(x)^2) \vee \mathsf{d}_p(x)^p) \mathbin{\leftrightarrow} x \tag{$D_p$},
\end{equation}
and the rule $\mathrm{U}_{\{((kz \wedge \neg 2z^2) \vee z^k) \leftrightarrow x\}}$. Since this rule is derivable in $L_\mathcal{P}$, the expansion is obtained simply by adding $D_p$.

For a set $S$ of prime numbers define $L_\mathcal{P}^S$ as the expansion of $L_\mathcal{P}$ by the axioms $\{D_p: p \in S\}$. Note that, by the comment above, $L_\mathcal{P}^S$ is the algebraic expansion of $L_\mathcal{P}$ corresponding to the AE-class axiomatized by $\Sigma_S := \{\varepsilon_p: p \in S\}$. Thus, the equivalent algebraic semantics $\mathsf{V}(\mathcal{P})^{\Sigma_S}$ of $L_\mathcal{P}^S$ is a variety.

\begin{theorem}
\
\begin{enumerate}
\item Every algebraic expansion of $L_\mathcal{P}$ is $\tau_\mathcal{MV}$-equipollent to exactly one of the following:
\begin{itemize}
\item Inconsistent Logic,
\item Classical Propositional Logic,
\item $L_\mathcal{P}^S$ for some set $S$ of prime numbers.
\end{itemize}
\item The algebraic expansions of $L_\mathcal{P}$ form a lattice isomorphic with $\mathbf{2}^\omega \oplus \mathbf{2}$ when ordered by $\tau_\mathcal{MV}$-morphisms.
\item Given sets $S,S'$ of prime numbers with $S \subseteq S'$, the expansion $L_\mathcal{P}^{S'}$ is conservative over $L_\mathcal{P}^S$.
\end{enumerate}
\end{theorem}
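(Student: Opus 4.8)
The plan is to follow the proof of Theorem~\ref{TEO: exp alg de Bal} verbatim, transporting each step from Abelian $\ell$-groups to perfect MV-algebras and feeding in the MV-analogues already proved. Items 1 and 2 should be essentially immediate from Theorem~\ref{TEO: equivalencias interpretaciones} together with the classification of AE-subclasses in Theorem~\ref{TEO: clases AE de perfectas}; item 3 is the only part needing genuine work.

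For item 2, recall that the algebraic expansions of $L_\mathcal{P}$, taken modulo $\tau_\mathcal{MV}$-equipollency and ordered by $\tau_\mathcal{MV}$-morphisms, form a lattice that is the order dual of the lattice $\Lambda$ of AE-subclasses of $\mathsf{V}(\mathcal{P})$ under inclusion. First I would invoke Theorem~\ref{TEO: clases AE de perfectas} to get $\Lambda \cong \mathbf{2} \oplus \mathbf{2}^\omega$, whence the expansion lattice is $(\mathbf{2}\oplus\mathbf{2}^\omega)^{\mathrm{op}} \cong \mathbf{2}^\omega \oplus \mathbf{2}$. For item 1 I would then read off the three families of AE-subclasses from Theorem~\ref{TEO: clases AE de perfectas} and match them to logics through the correspondence $\Sigma \mapsto L_\mathcal{P}^\Sigma$: the class of trivial algebras $\{\forall xy\, x = y\}$ corresponds to the Inconsistent Logic; the AE-class $\{\forall x\, 2x = x\}$ consists of the Boolean algebras in $\mathsf{V}(\mathcal{P})$, and since $\forall x\, 2x = x$ is term-equivalent to the EFD-sentence $\forall x \exists! z\,(2x = x \band z = x)$ whose skolemization collapses $L_\mathcal{P}$ to its axiomatic extension by $2x = x$, this expansion is (equipollent to) Classical Propositional Logic; and for a set $S$ of primes the class $\Sigma_S$ corresponds by definition to $L_\mathcal{P}^S$. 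The ``exactly one'' clause follows from Theorem~\ref{TEO: equivalencias interpretaciones}.2 (equipollence $\Leftrightarrow$ equality of AE-classes) and the injectivity of $S \mapsto \Sigma_S$.

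For item 3 I would argue exactly as in Theorem~\ref{TEO: exp alg de Bal}.3. Fix $S \subseteq S'$ and let $\mathcal{V}_S := \mathsf{V}(\mathcal{P})^{\Sigma_S}$ and $\mathcal{V}_{S'} := \mathsf{V}(\mathcal{P})^{\Sigma_{S'}}$ be the corresponding equivalent algebraic semantics, both varieties over the nested languages $\tau_{\Sigma_S} \subseteq \tau_{\Sigma_{S'}}$. Since $L_\mathcal{P}^{S'}$ is finitary, conservativity reduces to showing that every quasi-identity over $\tau_{\Sigma_S}$ valid in $\mathcal{V}_{S'}$ is already valid in $\mathcal{V}_S$. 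Let $\mathbf{D} := \Gamma(\mathbf{Z} \mathbin{\vec{\times}} \mathbf{Q},(1,0))$ be the totally ordered divisible perfect MV-algebra of Example~\ref{EJ: Gamma(Z por Q)}, and for a set $T$ of primes write $\overline{\mathbf{D}}_T$ for the expansion of $\mathbf{D}$ by the functions $\mathsf{d}_p = [\varepsilon_p]^\mathbf{D}$ with $p \in T$; as $\mathbf{D} \vDash \varepsilon_k$ for every $k$, we have $\overline{\mathbf{D}}_T \in \mathcal{V}_T$. The essential claim is $\mathsf{Q}(\overline{\mathbf{D}}_S) = \mathcal{V}_S$. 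Granting it, the argument concludes just as for $Bal$: because $S \subseteq S'$, the algebra $\overline{\mathbf{D}}_S$ is simply the $\tau_{\Sigma_S}$-reduct of $\overline{\mathbf{D}}_{S'}$, so a $\tau_{\Sigma_S}$-quasi-identity valid in $\mathcal{V}_{S'}$ holds in $\overline{\mathbf{D}}_{S'}$, hence in its reduct $\overline{\mathbf{D}}_S$, and therefore throughout $\mathcal{V}_S = \mathsf{Q}(\overline{\mathbf{D}}_S)$.

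The hard part will be the generation claim $\mathsf{Q}(\overline{\mathbf{D}}_S) = \mathcal{V}_S$, the precise analogue of the fact, merely asserted in Theorem~\ref{TEO: exp alg de Bal}.3, that $\mathbf{Q}_S$ generates the $\ell$-group expansion variety. The inclusion $\mathsf{Q}(\overline{\mathbf{D}}_S) \subseteq \mathcal{V}_S$ is immediate since $\overline{\mathbf{D}}_S \in \mathcal{V}_S$. For the reverse I would reduce to the finitely subdirectly irreducible members of $\mathcal{V}_S$, which by Lemma~\ref{LEMA: props de MV perfectas} are expansions of totally ordered perfect MV-algebras satisfying $\Sigma_S$; passing through Mundici's $\Gamma$ and the radical (Lemma~\ref{LEMA: A sat phi^p <-> Rad A sat phi} and the discussion following Example~\ref{EJ: Gamma(Z por Q)}) turns each such algebra into a totally ordered Abelian $\ell$-group whose positive cone is $S$-divisible, and an $S$-divisible sharpening of the Weinberg embedding behind Lemma~\ref{LEMA: props de l-grupos} places it in $\mathsf{ISP_U}(\overline{\mathbf{D}}_S)$ (the subalgebra operator cutting divisibility down to $S$). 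Transporting back through $\Gamma$ then yields $\mathcal{V}_S \subseteq \mathsf{Q}(\overline{\mathbf{D}}_S)$. I expect the $S$-divisible refinement of Weinberg's embedding theorem to be the only genuinely technical point.
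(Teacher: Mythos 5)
Your proposal matches the paper's proof: items 1 and 2 are read off from Theorems \ref{TEO: equivalencias interpretaciones} and \ref{TEO: clases AE de perfectas}, and item 3 repeats the argument of Theorem \ref{TEO: exp alg de Bal}.3 with $\mathbf{D}_S$ (the expansion of $\Gamma(\mathbf{Z} \mathbin{\vec{\times}} \mathbf{Q},(1,0))$ by the $\mathsf{d}_p$, $p\in S$) in place of $\mathbf{Q}_S$, the generation claim $\mathsf{Q}(\mathbf{D}_S)=\mathcal{V}_S$ being exactly what the paper asserts without proof. Your sketch of that claim is sound, though no ``$S$-divisible sharpening'' of Weinberg is really needed: since division in a torsion-free group is unique when it exists, any embedding of a totally ordered $S$-divisible member into an ultrapower of the (fully divisible) generator automatically preserves the $\mathsf{d}_p$.
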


\begin{proof}
Items 1. and 2. follow from Theorems \ref{TEO: equivalencias interpretaciones} and \ref{TEO: clases AE de perfectas}. To prove 3. let $\mathbf{D}_S$ be the expansion of $\Gamma(\mathbf{Z} \mathbin{\vec{\times}} \mathbf{Q}, (1,0))$ by the operations $\mathsf{d}_p$ for $p \in S$. Now the proof follows the argument of that of 3. in Theorem \ref{TEO: exp alg de Bal} with $\mathbf{D}_S$ in place of $\mathbf{Q}_S$. 
\end{proof}

\bibliographystyle{plain}

\begin{thebibliography}{10}

\bibitem{AglPan02-GeomMethWajsHoops}
P.~Aglian\`o and G.~Panti.
\newblock Geometrical methods in {W}ajsberg hoops.
\newblock {\em J. Algebra}, 256(2):352--374, 2002.

\bibitem{BelDiNLet93-LocalMV}
L.~P. Belluce, A.~Di~Nola, and A.~Lettieri.
\newblock Local {MV}-algebras.
\newblock {\em Rend. Circ. Mat. Palermo (2)}, 42(3):347--361, 1993.

\bibitem{BelDiNGer07-PerfectMVAlg}
Lawrence~P. Belluce, Antonio Di~Nola, and Brunella Gerla.
\newblock Perfect {MV}-algebras and their logic.
\newblock {\em Appl. Categ. Structures}, 15(1-2):135--151, 2007.

\bibitem{Birkhoff40-Book-LatticeTheory}
Garrett Birkhoff.
\newblock {\em Lattice {T}heory}.
\newblock American Mathematical Society, New York, 1940.

\bibitem{BloFer00-OnStructHoops}
W.~J. Blok and I.~M.~A. Ferreirim.
\newblock On the structure of hoops.
\newblock {\em Algebra Universalis}, 43(2-3):233--257, 2000.

\bibitem{Bosbach69-KomplHalbgruppen-AxiomArith}
Bruno Bosbach.
\newblock Komplement\"are {H}albgruppen: {A}xiomatik und {A}rithmetik.
\newblock {\em Fund. Math.}, 64:257--287, 1969.

\bibitem{Bosbach70-KomplHalbgruppen-KongQuot}
Bruno Bosbach.
\newblock Komplement\"are {H}albgruppen. {K}ongruenzen und {Q}uotienten.
\newblock {\em Fund. Math.}, 69:1--14, 1970.

\bibitem{BurSan81-Book-CourseUnivAlg}
S.~Burris and H.~Sankappanavar.
\newblock {\em A course in universal algebra}.
\newblock Graduate texts in mathematics. Springer-Verlag, 1981.

\bibitem{Caicedo04-ImpConnectivesAlgLogics}
Xavier Caicedo.
\newblock Implicit connectives of algebraizable logics.
\newblock {\em Studia Logica}, 78(1-2):155--170, 2004.

\bibitem{Caicedo07-Implicit-in-Luka}
Xavier Caicedo.
\newblock {\em Algebraic and Proof-theoretic Aspects of Non-classical Logics,
  Lecture Notes in Computer Science}, volume 4460, chapter Implicit Operations
  in MV-Algebras and the Connectives of \L ukasiewicz Logic.
\newblock Springer, Berlin, Heidelberg, 2007.

\bibitem{CaiCig01-AlgApprIntuitConnectives}
Xavier Caicedo and Roberto Cignoli.
\newblock An algebraic approach to intuitionistic connectives.
\newblock {\em J. Symbolic Logic}, 66(4):1620--1636, 2001.

\bibitem{CalGon05-EquipollentLogSys}
Carlos Caleiro and Ricardo Gon\c{c}alves.
\newblock Equipollent logical systems.
\newblock In {\em Logica universalis}, pages 99--111. Birkh\"{a}user, Basel,
  2005.

\bibitem{CamVag11-AlgebraicFunctions}
M.~Campercholi and D.~Vaggione.
\newblock Algebraic functions.
\newblock {\em Studia Logica}, 98(1-2):285--306, 2011.

\bibitem{Campercholi10-Heyting-preprint}
Miguel Campercholi.
\newblock Algebraically expandable classes of heyting algebras.
\newblock 2010.
\newblock preprint.

\bibitem{Campercholi10-Implication}
Miguel Campercholi.
\newblock Algebraically expandable classes of implication algebras.
\newblock {\em Internat. J. Algebra Comput.}, 20(5):605--617, 2010.

\bibitem{CamVag09-AlgExpClasses}
Miguel Campercholi and Diego Vaggione.
\newblock Algebraically expandable classes.
\newblock {\em Algebra Universalis}, 61(2):151--186, 2009.

\bibitem{CigDOtMun00-Book-AlgFoundManyValReasoning}
Roberto L.~O. Cignoli, Itala M.~L. D'Ottaviano, and Daniele Mundici.
\newblock {\em Algebraic foundations of many-valued reasoning}, volume~7 of
  {\em Trends in Logic---Studia Logica Library}.
\newblock Kluwer Academic Publishers, Dordrecht, 2000.

\bibitem{DiNLet94-PerfectMVCategEquivToAbelLGroups}
Antonio Di~Nola and Ada Lettieri.
\newblock Perfect {MV}-algebras are categorically equivalent to abelian
  {$l$}-groups.
\newblock {\em Studia Logica}, 53(3):417--432, 1994.

\bibitem{Ferreirim92-Thesis}
Isabel Maria~Andre Ferreirim.
\newblock {\em On varieties and quasivarieties of hoops and their reducts}.
\newblock ProQuest LLC, Ann Arbor, MI, 1992.
\newblock Thesis (Ph.D.)--University of Illinois at Chicago.

\bibitem{Font16-Book-AbstractAlgebraicLogic}
Josep~Maria Font.
\newblock {\em Abstract algebraic logic}, volume~60 of {\em Studies in Logic
  (London)}.
\newblock College Publications, London, 2016.
\newblock An introductory textbook, Mathematical Logic and Foundations.

\bibitem{GLS04}
Adriana Galli, Renato~A. Lewin, and Marta Sagastume.
\newblock The logic of equilibrium and abelian lattice ordered groups.
\newblock {\em Arch. Math. Logic}, 43(2):141--158, 2004.

\bibitem{Glass99-Book-PartiallyOrderedGroups}
A.~M.~W. Glass.
\newblock {\em Partially ordered groups}, volume~7 of {\em Series in Algebra}.
\newblock World Scientific Publishing Co., Inc., River Edge, NJ, 1999.

\bibitem{GraVag96-SheafRepVarLatExp}
Hector Gramaglia and Diego Vaggione.
\newblock Birkhoff-like sheaf representation for varieties of lattice
  expansions.
\newblock {\em Studia Logica}, 56(1-2):111--131, 1996.
\newblock Special issue on Priestley duality.

\bibitem{Hodges93-Book-ModelTheory}
Wilfrid Hodges.
\newblock {\em Model theory}, volume~42 of {\em Encyclopedia of Mathematics and
  its Applications}.
\newblock Cambridge University Press, Cambridge, 1993.

\bibitem{Jonsson67-AlgCongLatDistributive}
Bjarni J\'onsson.
\newblock Algebras whose congruence lattices are distributive.
\newblock {\em Math. Scand.}, 21:110--121 (1968), 1967.

\bibitem{MOG05}
George Metcalfe, Nicola Olivetti, and Dov Gabbay.
\newblock Sequent and hypersequent calculi for abelian and \l ukasiewicz
  logics.
\newblock {\em ACM Trans. Comput. Log.}, 6(3):578--613, 2005.

\bibitem{Mundici86-InterpAFinLuk}
Daniele Mundici.
\newblock Interpretation of {AF} {$C^\ast$}-algebras in \l ukasiewicz
  sentential calculus.
\newblock {\em J. Funct. Anal.}, 65(1):15--63, 1986.

\bibitem{Mundici86-MappAbelLGroupsStrongUnitMVAlg}
Daniele Mundici.
\newblock Mapping abelian {$l$}-groups with strong unit one-one into {MV}
  algebras.
\newblock {\em J. Algebra}, 98(1):76--81, 1986.

\bibitem{Robinson56-Book-CompleteTheories}
Abraham Robinson.
\newblock {\em Complete theories}.
\newblock North-Holland Publishing Co., Amsterdam, 1956.

\bibitem{Volger79-PreservThmLimitsStrucGlobalSections}
Hugo Volger.
\newblock Preservation theorems for limits of structures and global sections of
  sheaves of structures.
\newblock {\em Math. Z.}, 166(1):27--54, 1979.

\bibitem{Weinberg65-FreeLatOrdAbelGroupsII}
Elliot~Carl Weinberg.
\newblock Free lattice-ordered abelian groups. {II}.
\newblock {\em Math. Ann.}, 159:217--222, 1965.

\end{thebibliography}

\

\

\

M. Campercholi

\noindent Facultad de Matem\'atica, Astronom\'{i}a y F\'{i}sica (Universidad Nacional de C\'ordoba) \\
CIEM - CONICET \\
C\'ordoba, Argentina

\noindent camper@famaf.unc.edu.ar

\

D. N. Casta\~no

\noindent Departamento de Matem\'atica (Universidad Nacional del Sur) \\
Insituto de Matemática (INMABB) - UNS-CONICET \\
Bah\'{i}a Blanca, Argentina

\noindent diego.castano@uns.edu.ar

\

J. P. D\'{i}az Varela

\noindent Departamento de Matem\'atica (Universidad Nacional del Sur) \\
Insituto de Matemática (INMABB) - UNS-CONICET \\
Bah\'{i}a Blanca, Argentina

\noindent usdiavar@criba.edu.ar

\

J. Gispert

\noindent Departament de Matem\`atiques i Inform\`atica \\
Institut de Matem\`atiques de la Universitat de Barcelona (IMUB) \\
Barcelona Graduate School of Mathematics (BGSMath) \\
Universitat de Barcelona (UB) \\
Barcelona, Spain.

\noindent jgispertb@ub.edu

\end{document}